\documentclass[a4paper,12pt,oneside,reqno]{amsart}
\usepackage{amssymb,amsfonts,amsmath,amsthm,amscd, bbm}
\usepackage{graphicx, color}
\setlength{\textwidth}{16cm}
%\usepackage{PSFigure,psfrag}
%\usepackage{epsfrag,PSFigure}
%\usepackage{psfrag}
%%\usepackage{showkeys}
%\addtolength{\textheight}{4.5cm}
%\addtolength{\topmargin}{-2cm}
\addtolength{\oddsidemargin}{-1.5cm}
\addtolength{\evensidemargin}{-1cm}     
\hoffset2.5pt

\thispagestyle{empty}
\numberwithin{equation}{section}  

\newcommand{\beq}{\begin{equation}} 
\newcommand{\eeq}{\end{equation}} 
\newcommand{\bea}{\begin{aligned}}
\newcommand{\eea}{\end{aligned}}
\newcommand{\bdm}{\begin{displaymath}}
\newcommand{\edm}{\end{displaymath}}
\newcommand{\barr}{\begin{array}}
\newcommand{\earr}{\end{array}}
\newcommand{\ben}{\begin{enumerate}}
\newcommand{\een}{\end{enumerate}}
\newcommand{\bde}{\begin{description}}
\newcommand{\ede}{\end{description}}

%*****************************************************% 
\newtheorem{teor}{Theorem}
\newtheorem{prop}[teor]{Proposition}  
\newtheorem{lem}[teor]{Lemma}  
  
\newtheorem{fact}{Fact}

\newtheorem{Def}[teor]{Definition}  
  
\newtheorem{conj}[teor]{Conjecture}

\newcommand{\R}{\mathbb{R}}

\newcommand{\N}{\mathbb{N}}

\newcommand{\PP}{\mathbb{P}}
\newcommand{\E}{{\mathbb{E}}}

\newcommand{\defi}{\equiv} %    {\stackrel{\text{def}}{=}}

\newcommand{\be}{\beta}

\newcommand{\de}{\delta}

\newcommand{\s}{\sigma}
\newcommand{\vare}{\varepsilon}

\newcommand{\RM}[1]{\MakeUppercase{\romannumeral #1{.}}}
\DeclareMathOperator{\Av}{Av}

\begin{document}
\title[Replica symmetry of ISP]
{On the replica symmetry phase of \\ the independent set problem}

\author[N. Kistler]{Nicola Kistler}
\address{J.W. Goethe-Universit\"at Frankfurt, Germany.}
\email{kistler@math.uni-frankfurt.de, mschmidt@math.uni-frankfurt.de}

\author[M. Schmidt]{Marius A. Schmidt}

\subjclass[2000]{05C80, 82B44, 60K35} \keywords{random graphs, independent set problem, disordered systems,statistical
mechanics}

\thanks{ }

 \date{\today}

\begin{abstract} 
The independent set problem, ISP for short, asks for the maximal number of vertices in a (large) graph which can be occupied such that none of them are neighbors. We address the question from a statistical mechanics perspective, in the case of Erd\H{o}s-R\'{e}nyi random graphs. We thereby introduce a Hamiltonian penalizing configurations which do not satisfy the non-neighboring constraint:  the ground state of the ensuing disordered system corresponds to the solution of the ISP. Identifying the ground state amounts, in turns, to control the phase where replica symmetry is broken, which is way beyond our current understanding. By means of Talagrand's cavity method, we rigorously establish the existence of a replica symmetry phase, computing, in particular, the free energy in the limit of large graphs. A conjectural  formula for the ground state, hence for the solution of the ISP, is also derived. Being based on the Parisi theory, the emerging picture is that of a staggering complexity. 
\end{abstract}

\maketitle

\tableofcontents

\section{Introduction}

The ISP is a fundamental question in computer science, see e.g. \cite{aco_eft, bandy_bamarnik,  dani_moore,     frieze, frieze_mcdiarmid, gamarnik_et_al} and references therein. {\it Given a graph, what is the largest fraction of vertices which can be occupied such that none of them are neighbors}? With applications in mind where the graph is large, we address here the question in the case of the paradigmatical Erd\H{o}s-R\'{e}nyi random graph $\mathbf G_{N, p}$, i.e. the complete graph on $N$ vertices where each edge is retained with probability $p$ independently of each other. We are interested in the ISP for a given realization of the graph $\mathbf G_{N, p}$ in the large $N$-limit. 

To formalize, we consider a configuration space $\Sigma_N \defi \{0, 1\}^N$. Given a configuration $\s = (\s_1, \dots, \s_N) \in \Sigma_N$ we refer to $\s_i$ as the spin at site $i$. We say that site $i$ is occupied if $\s_i = 1$, and unoccupied otherwise. Consider then random variables $\{g_{ij}, 1 \leq i < j \leq N\}$ on some probability space $(\Omega, \mathcal F, \PP)$; these are assumed to be independent, Bernoulli-distributed with success probability $\gamma/N$. (Expectation w.r.t. such random variables will be denoted by $\E$). Site $i$ and $j$ are neighbors if $g_{ij} = 1$. This construction thus corresponds to the ISP on $\mathbf G_{N, p}$, where $p \defi \gamma/N$. In other words, the parameter $\gamma$ measures the amount of {\it dilution}: the larger it gets, the more connected the underlying random graph. Finally, we consider the random function $H_N: \Sigma_N \to \N$ defined as 
\beq \label{hardcore}
H_{N}(\s) \defi \begin{cases}
\sum_{i=1}^N \s_i  & \text{if} \; \sum_{1 \leq i < j \leq N} g_{ij} \s_i \s_j =  0, \\
- \infty & \text{otherwise}.
\end{cases}
\eeq
Thus, the largest fraction of non-neighboring sites which can be occupied is, on average, 
\beq
\E\left[ \frac{1}{N}  \max_{\s \in  \Sigma_N} H_N(\s) \right] \defi \texttt{ISP}_N(\gamma).
\eeq

In the form given above, the non-neighboring condition is a hard-core constraint which makes the problem 
all the more challenging. In this paper, we adopt a statistical mechanics perspective. We refer the reader to the lecture notes of Montanari \cite{montanari} for an excellent exposition of this point of view, the relation with combinatorial problems, as well as relevant references (see also, e.g., \cite{desanctis_guerra}). Precisely, we introduce the {\it Hamiltonian} 
\beq \label{ham}
H_{N, \be, h, \gamma}(\s) \defi h \sum_{i\leq N} \s_i - \be \sum_{1\leq i<j\leq N} g_{ij} \s_i \s_j, 
\eeq
where $h, \be \geq 0$ are, respectively, the external magnetic field, and the inverse of temperature. The associated {\it Gibbs measure}  is then
\beq
\mathcal G_{N, \be, h, \gamma}(\s) \defi \frac{\exp H_{N, \be, h, \gamma}(\s)}{Z_N(\be, h, \gamma)}, \quad \s \in \Sigma_N, 
\eeq
where 
\beq\label{part_func}
Z_N(\be, h, \gamma) \defi \sum_{\s\in \Sigma_N} \exp H_{N, \be, h, \gamma}(\s)
\eeq 
is the {\it partition function}. Remark that the Gibbs measure is a random ("quenched") probability measure on $\Sigma_N$, the randomness stemming from the $g$-disorder.  The ensuing disordered system may be seen as a soft version of the ISP:  the configurations not satisfying the non-neighboring condition, although not suppressed, are exponentially penalized. Intuitively, the Gibbs measure will thus charge, for large $\be$, only configurations which 'overwhelmingly' satisfy the hard-core constraint.  For {\it finite} $N$, this intuition is indeed correct: the largest fraction of sites which can be occupied on average may be recovered from the mean {\it free energy}
\beq \label{free_energ}
f_N(\be, h, \gamma) \defi \E\left[ \frac{1}{N} \log Z_N(\be, h, \gamma) \right],
\eeq
for then it clearly holds that 
\beq 
 \lim_{h \to \infty}\lim_{\be \to \infty} \frac{f_N(\be, h, \gamma)}{h}   = \texttt{ISP}_N(\gamma). 
\eeq
Therefore, assuming that one can {\it i)} compute the limiting free energy for all $\beta, h$ and $\gamma$, and {\it ii)} justify the interchange of $(\be, h)$- and $N$-limit, the statistical mechanics approach would yield the solution of 
the ISP in the case of {\it infinite} Erd\H{o}s-R\'{e}nyi graphs. The first issue is the crux of the method, while the second may be considered a technical, albeit challenging, difficulty. In fact, computing the low temperature (large $\be$) limit of disordered systems is a notorious problem which leads into the realm of replica symmetry breaking \cite{parisi}, a phenomenon that remains to these days rather perplexing (we will dwell on this in Section \ref{facconj} below). 

As a first, modest step we tackle here the phase of replica symmetry, computing, in particular, the large-$N$ free energy in the high temperature regime  (small $\be$), or low connectivity (small $\gamma$). This is done by an adaptation of Talagrand's  cavity method \cite{talagrand}, to date the most powerful, and flexible tool to address the replica symmetry phase of a (any?) diluted disordered system of mean field type. 

Finally, we also provide an explicit, albeit conjectural formula the low  temperature free energy (for any $\be, h$ and $\gamma$), hence for the solution of the ISP: the method relies on the interpolation akin to the one first introduced by Guerra in \cite{guerra}  and then  Aizenman-Sims-Starr \cite{ass} for mean filed models, and implemented for diluted models by Franz-Leone \cite{franz} and Panchenko-Talagrand \cite{panchy_tala}.

\section{Main results}

\subsection{The phase of replica symmetry}

The key idea is natural, and simple: for small $\be$ (high temperature), or small $\gamma$ (strong dilution) the Gibbs measure $\E \mathcal G_{N, \be, h, \gamma}$  restricted to a finite number of spins should approach a product measure in the large $N$-limit; assuming that the system settles down to a "steady state", the law of the spins must then satisfy a natural self-consistency. The cavity method implements this insight by integrating out one spin at a time (creating {\it cavities}), thereby showing that the procedure is indeed a contraction. 

To see how this precisely goes, we need some notation. For ease of exposition we will henceforth drop the subscripts in the Hamiltonian, i.e. we write $H(\s)$ for the Hamiltonian $H_{N, \be, h, \gamma}(\s)$ on the $N$-system, and denote by $\left < \right>$ expectation w.r.t. the quenched Gibbs measure. By $H_-(\s)$ we understand the Hamiltonian on the $N-1$ system at parameters $\be, h$ but slightly increased dilution $\gamma' \defi \frac{N-1}{N} \gamma$, and $\left< \right>_{-}$ stands for the associated quenched average over $\Sigma_{N-1}$. Finally, for $Y = (y_i)_{i\in \N}$ with $y_i \in [0,1]$, we denote by $\left< \right>_Y$ the {\it product} measure on spins with marginals given by $\left< \s_i \right>_Y = y_i$.

Let us work out some implications of the intuition that Gibbs measure should resemble a product measure. Under this assumption, and since spins take values in $\{0,1\}$ only, the quenched Gibbs measure is specified by the  "magnetization" of the spins: for instance at site $N$, this reads
\beq \bea \label{mean_one}
\left< \s_N \right> & = \frac{\sum_{\s \in \Sigma_N} \s_N \exp H(\s)}{\sum_{\s \in \Sigma_N} \exp H(\s) }
\eea \eeq
We now write $H(\s) = H_-(\s_1, \dots \s_{N-1}) + \s_N \left( h- \be \sum_{i=1}^{N-1} g_{i, N} \s_i \right)$, perform the trace over $\s_N \in \{0,1\}$, and finally divide both numerator and denominator in \eqref{mean_one} by the partition function on $\Sigma_{N-1}$ associated to  the Hamiltonian $H_-$. This leads to
\beq \label{mean_two}
\left< \s_N \right> = \frac{\left< \exp\left( h- \be \sum_{i=1}^{N-1} g_{i, N} \s_i \right)\right>_-}{1+\left< \exp\left( h- \be \sum_{i=1}^{N-1} g_{i, N} \s_i \right)\right>_-} = \left( 1+\left< \exp\left( h- \be \sum_{i=1}^{N-1} g_{i, N} \s_i \right)\right>_{-}^{-1} \right)^{-1}
\eeq
The sum on the r.h.s. of \eqref{mean_two} is over the (random) set $\{i\leq N-1: g_{i,N} = 1\}$; in the large $N$-limit its cardinality weakly approaches a Poisson random variable of mean $\gamma$, which we denote by $r$.  In other words, the distribution of $\left< \s_N\right>$ should be close to the law of 
\beq \label{inter}
\left( 1+\left< \exp\left( h- \be \sum_{i\leq r} \s_i \right)\right>_Y^{-1} \right)^{-1}\,,
\eeq
where $Y \defi \left( \left< \s_i \right>_- \right)_{i \leq r}$ are the magnetizations on the $(N-1)$-system. Exploiting the product measure property, \eqref{inter} may be written as 
\beq \label{will show up again}
\left( 1+ e^{-h} \prod_{i\leq r} \left( 1-  \left( 1- e^{-\be} \right) \left< \s_i \right>_- \right)^{-1} \right)^{-1} \,.
\eeq
To summarize, we should have 
\beq \label{apr}
\mathcal L\left( \left< \s_N \right> \right) \approx \mathcal L\left( \left( 1+ e^{-h} \prod_{i\leq r} \left( 1-  \left( 1- e^{-\be} \right) \left< \s_i \right>_- \right)^{-1} \right)^{-1} \right) \,,
\eeq
where $\mathcal L$ stands for law. Remark that the r.h.s of \eqref{apr} involves the magnetizations on the $(N-1)$-system, whereas the l.h.s refers to the $N$-system: it seems plausible that there shouldn't be any difference in the large $N$-limit, in which case \eqref{apr} would appear as a natural self-consistency property. 

To rigorously formulate the above line of reasoning we introduce an operator $T = T_{\be, h, \gamma}$ acting on $(\mathcal M, d)$, the space of probability measures on $[0,1]$ equipped with the Monge-Kantorovich distance. The latter is defined as $d(\mu_1, \mu_2) \defi \inf \E \left| X-Y\right|$, where the infimum is taken over all couplings $(X,Y)$ such that $\mathcal L(X)= \mu_1$ and $\mathcal L(Y)= \mu_2$. (The associated convergence is equivalent to the usual weak-convergence of probability measures: $d(\mu_n, \mu) \to 0$ if and only if $\int f d\mu_n \to \int f d\mu$, for all $f$ which are continuous and bounded, see e.g. \cite{talagrand} for details.) 

Given $\nu \in \mathcal M$, we consider an infinite sequence $X = (X_i)_{i\in \N}$ of independent, $\nu$-distributed random variables taking values in $[0,1]$. The aforementioned operator 
$T_{\be, h, \gamma}: \mathcal M \to \mathcal M$ is then 
\beq \label{operator}
T_{\be, h, \gamma}(\nu) \defi \mathcal L\left( \left( 1+\left< \exp\left( h- \be \sum_{i\leq r} \s_i \right)\right>_X^{-1} \right)^{-1} \right)\,.
\eeq
The self-consistency standing behind \eqref{inter} or, which is the same, \eqref{apr}, corresponds then to the fixpoints 
\beq
\nu = T_{\be, h, \gamma}(\nu). 
\eeq
Existence, uniqueness, and properties of solutions to this equation will naturally depend on the underlying parameters $\be, h, \gamma$. Let us set 
\beq \label{replicasymmetry}
C(\be, \gamma) \defi 7\left( \gamma+ \gamma^3 \right)\left( e^{2\be}-1\right)\exp \left( \gamma\left( e^{2\be} - 1 \right)\right)\,.
\eeq
Here is a first result. 

\begin{prop} \label{prop_contraction} With the above notation: 
\begin{itemize}
\item[i)] Assume that $\be, \gamma$ are such that $C(\be, \gamma) < 1$. Then the $T$-operator is a contraction on $(\mathcal M, d)$. In particular, there exists a unique solution $\nu_\star= \nu_\star(\be, h, \gamma)$ of the fixpoint-equation $T_{\be, h, \gamma}(\nu) = \nu$.  
\item[ii)] Assume that $\be, \gamma, \gamma'$ are such that $C(\be,\gamma)<1$ and $C(\be,\gamma')<1$.
For $\nu_\star\left(\be,h,\gamma \right)$ and $\nu_\star\left(\be,h,\gamma' \right)$ solutions of the corresponding fixpoints, the continuity estimate holds: 
\beq \label{Tcontinuous}
d(\nu_\star\left(\be,h,\gamma \right),\nu_\star\left(\be,h,\gamma' \right))\leq \mathcal K_{\be, \gamma, \gamma'} \left| \gamma-\gamma' \right|,
\eeq
where $\mathcal K_{\be, \gamma, \gamma'} \defi \min\left\{  \left( 1-C(\beta, \gamma)\right)^{-1},  \left( 1-C(\beta, \gamma')\right)^{-1}\right\}$.
\end{itemize}
\end{prop}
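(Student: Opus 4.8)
\emph{Proof strategy.} The plan is to derive part i) from the Banach fixpoint theorem — once $T \defi T_{\be,h,\gamma}$ is shown to contract $d$ by the factor $C(\be,\gamma)$ — and to obtain part ii) by combining this contraction with the triangle inequality and a coupling of two Poisson laws. For i), fix $\mu_1,\mu_2 \in \mathcal M$. Since $[0,1]$ is compact there is a $d$-optimal coupling of $\mu_1$ and $\mu_2$ (e.g.\ the quantile coupling $X = F_{\mu_1}^{-1}(U)$, $Y = F_{\mu_2}^{-1}(U)$ through a common uniform $U$); realising it independently on each coordinate yields an i.i.d.\ sequence $\left((X_i,Y_i)\right)_{i\in\N}$ with $\mathcal L(X_i) = \mu_1$, $\mathcal L(Y_i) = \mu_2$ and $\E|X_i - Y_i| = d(\mu_1,\mu_2)$. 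Let $r$ be Poisson of mean $\gamma$, independent of these pairs. By the product-measure identity already used for \eqref{will show up again}, namely $\left< \exp\left( h - \be \sum_{i\leq r}\s_i\right)\right>_X = e^{h}\prod_{i\leq r}\left( 1 - (1-e^{-\be})X_i\right)$, the variable
\beq \label{prop:Fdef}
F_r(X) \defi \left( 1 + e^{-h}\prod_{i\leq r}\left( 1 - (1-e^{-\be})X_i\right)^{-1}\right)^{-1}
\eeq
has law $T(\mu_1)$, and $F_r(Y)$ has law $T(\mu_2)$; hence $\left(F_r(X),F_r(Y)\right)$ is a coupling of the two images and $d(T\mu_1, T\mu_2) \leq \E\left| F_r(X) - F_r(Y)\right|$.

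Next I would control $|F_r(X)-F_r(Y)|$ by telescoping over the $r$ coordinates, reducing the estimate to the effect of replacing a single $X_k$ by $Y_k$ with the other arguments frozen. For $x\in[0,1]$ one has $1-(1-e^{-\be})x\in[e^{-\be},1]$, so each factor $\left( 1-(1-e^{-\be})x\right)^{-1}$ lies in $[1,e^{\be}]$ and the map $x\mapsto\left( 1-(1-e^{-\be})x\right)^{-1}$ is Lipschitz with constant $\leq (1-e^{-\be})e^{2\be} = e^{2\be}-e^{\be}\leq e^{2\be}-1$ (its derivative $a(1-ax)^{-2}$, $a\defi 1-e^{-\be}$, being largest at $x=1$); together with $e^{-h}\prod(\cdots)^{-1}>0$ and the elementary bound $t(1+t)^{-2}\leq\frac14$ this gives $\left| F_r(\dots,x_k,\dots)-F_r(\dots,y_k,\dots)\right|\leq c_k|x_k-y_k|$ with $c_k$ depending only on $\be$ and $r$. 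Summing over $k\leq r$, taking expectations, using the independence of $r$ from the pairs and $\E|X_k-Y_k| = d(\mu_1,\mu_2)$, and inserting the finite Poisson moments of $r$ — low polynomial moments giving the $\gamma+\gamma^3$ prefactor (crudely, via $\gamma^2\leq\gamma+\gamma^3$) and the exponential moment $\E\, e^{2\be r} = \exp(\gamma(e^{2\be}-1))$ giving the last factor of \eqref{replicasymmetry} — yields $\E\left[\sum_{k\leq r}c_k\right]\leq C(\be,\gamma)$ and hence $d(T\mu_1,T\mu_2)\leq C(\be,\gamma)\,d(\mu_1,\mu_2)$. Since $(\mathcal M,d)$ is complete (in fact compact, $[0,1]$ being compact, with $d$ metrising weak convergence) and $C(\be,\gamma)<1$, Banach's theorem provides the unique fixpoint $\nu_\star$.

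For ii), put $\nu\defi\nu_\star(\be,h,\gamma)$, $\nu'\defi\nu_\star(\be,h,\gamma')$ and use $\nu = T_{\be,h,\gamma}(\nu)$, $\nu' = T_{\be,h,\gamma'}(\nu')$. By the triangle inequality,
\beq \label{prop:triangle}
d(\nu,\nu')\;\leq\; d\!\left( T_{\be,h,\gamma}(\nu),T_{\be,h,\gamma}(\nu')\right) + d\!\left( T_{\be,h,\gamma}(\nu'),T_{\be,h,\gamma'}(\nu')\right),
\eeq
where the first term is at most $C(\be,\gamma)\,d(\nu,\nu')$ by part i). For the second term take an i.i.d.\ sequence $(X_i)_{i\in\N}$ of law $\nu'$ and couple $r\sim\mathrm{Poisson}(\gamma)$ with $r'\sim\mathrm{Poisson}(\gamma')$ monotonically (assuming $\gamma\leq\gamma'$, set $r' = r+s$ with $s\sim\mathrm{Poisson}(\gamma'-\gamma)$ independent of $r$): then $F_r(X) = F_{r'}(X)$ on $\{s=0\}$, while $|F_r(X)-F_{r'}(X)|\leq 1$ always (both lie in $[0,1]$), so $d\!\left( T_{\be,h,\gamma}(\nu'),T_{\be,h,\gamma'}(\nu')\right)\leq\PP(s\geq1)\leq|\gamma-\gamma'|$. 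Substituting into \eqref{prop:triangle} gives $d(\nu,\nu')\leq\left( 1-C(\be,\gamma)\right)^{-1}|\gamma-\gamma'|$; decomposing \eqref{prop:triangle} the other way round (peeling $T_{\be,h,\gamma'}$ off first) yields the same bound with $C(\be,\gamma')$, and the minimum of the two is exactly \eqref{Tcontinuous}.

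The main obstacle is the second step of part i): carrying the per-coordinate constants $c_k$ through the telescoping and assembling them, with the Poisson moment generating function, into the clean closed form \eqref{replicasymmetry} — the $e^{2\be}-1$ originating from the Lipschitz constant of $x\mapsto\left( 1-(1-e^{-\be})x\right)^{-1}$, the $\exp(\gamma(e^{2\be}-1))$ from a Poisson exponential moment, and the $7(\gamma+\gamma^3)$ from crude bounds on low polynomial moments of $r$. Everything else — existence of optimal couplings on $[0,1]$, completeness of $(\mathcal M,d)$, the monotone coupling of two Poisson laws, and the two applications of the triangle inequality — is routine.
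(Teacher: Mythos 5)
Your part i) is on the right track but misses the key simplification the paper exploits, which removes the ``main obstacle'' you flag. Writing $m(x) = \bigl(1 + u(x)\bigr)^{-1}$ with $u(x) = e^{-h}\prod_{i\leq r}\bigl(1-(1-e^{-\be})x_i\bigr)^{-1}$, the coordinate derivative factorizes as
\[
\partial_{x_k} m = -\,\frac{u}{(1+u)^2}\cdot\frac{\partial_{x_k} u}{u}
             = -\,\frac{u}{(1+u)^2}\cdot\frac{1-e^{-\be}}{1-(1-e^{-\be})x_k},
\]
and since $u/(1+u)^2\leq 1/4$ and $(1-e^{-\be})/(1-(1-e^{-\be})x_k)\leq e^\be-1$, one gets $\|\partial_{x_k} m\|_\infty\leq (e^\be-1)/4$ \emph{uniformly in $r$ and in all other coordinates}. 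Your claim that the per-coordinate constant $c_k$ ``depends on $\be$ and $r$'' comes from bounding $\partial_{x_k}u$ in isolation (leaving $e^{\be(r-1)}$-type factors around) instead of letting the $u/(1+u)^2\leq 1/4$ step absorb the product. With the tighter $r$-free bound, the contraction constant is simply $\gamma(e^\be-1)/4$, and one finishes by the trivial observation $\gamma(e^\be-1)/4\leq C(\be,\gamma)$; there is no Poisson exponential moment and no reverse-engineering of the exact constant \eqref{replicasymmetry} (which is chosen large enough for the later lemmas, not sharp here). This is not a fatal gap — your route, if carried out, would eventually produce \emph{some} contraction bound — but as written it stalls precisely where the paper's derivative trick makes the estimate immediate. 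Your part ii) matches the paper's argument: same triangle-inequality split, and your monotone coupling $r' = r + s$ gives the same $\PP(r\neq r')\leq|\gamma-\gamma'|$ that the paper obtains from the TV bound on Poisson laws.
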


A cautionary note is compulsive. The requirement $C(\be, \gamma) < 1$ identifies a region of parameters we refer here and throughout as \emph{replica symmetry phase}. It should be however stressed right away that our definition presumably covers only a wee-tiny region of the 'true' replica symmetry phase. It is natural to conjecture that the latter coincides with the {\it largest} region in the $(\be, h, \gamma)$-space where the fixpoint equation admits a unique solution. This guess is however based on nothing more than (some) similarities with models which are (only slightly) better understood than the ISP. 

Next is our main result concerning the Gibbs measure in the replica symmetry phase. It puts on rigorous ground the key insight that finitely many spins "decouple" in the large $N$-limit (provided $\be, \gamma$ are small enough). In order to formulate this precisely we need some notation. 

For any function $f: \R \to \R$ we denote by $\| f\|_\infty \defi \sup_x  |f(x)|$ its supremum norm; for $L \in \R$, we say that $f$ is {\it $L$-Lipschitz} if  $|f(x)- f(y)|\leq L |x-y|$ for any $x,y \in \R$ (or any subset on which $f$ is defined).

\begin{teor} \label{full_ind} Let $k,N \in \N$. For any function $f: \{0,1\}^k \rightarrow \R,$  and any function $g$ which is $L_g$-Lipschitz on $\left[\min f, \max f \right]$, there exists a function  $\alpha(\be, \gamma)$ which is increasing in both coordinates (not depending on $k, N, f,g$) and finite for $C(\be, \gamma) < 1$, such that the following is true: 
\[
\left|\E g\left(\left<f\left(\sigma_1,..,\sigma_k\right)\right>\right) - \E g\left(\E\left[f\left(B_1,..,B_k\right) | X\right]\right) \right|\leq \alpha(\beta,\gamma) \| f \|_\infty L_g \frac{k^3}{N} \,,
\]
where $\nu_\star$ is the unique fixpoint of $T_{\be, \gamma, h}$, $X= (X_1,..,X_k)$ is a vector of independent $\nu_\star$-distributed random variables and, given $X$, the $B_i's$ are independent, and Bernoulli($X_i$)-distributed.

\end{teor}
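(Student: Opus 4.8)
The plan is to prove Theorem~\ref{full_ind} \emph{together with} the auxiliary estimate that a single Gibbs magnetization is asymptotically $\nu_\star$-distributed, simultaneously by induction on $N$, through the cavity step relating the $N$-system to the $(N-1)$-system at the slightly larger dilution $\gamma':=\frac{N-1}{N}\gamma$ (for which $C(\be,\gamma')\le C(\be,\gamma)<1$ since $\gamma'\le\gamma$ and $C$ is increasing, so that Proposition~\ref{prop_contraction} applies to it as well). Coupling the two statements is essential: the magnetization estimate is exactly the input that feeds the cavity step, while the decoupling conclusion of Theorem~\ref{full_ind} is exactly what certifies that the neighbours produced by the cavity step behave like independent $\nu_\star$-variables — neither can be established cleanly in isolation.

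\emph{Step 1: one-spin magnetization.} I would first show $d\bigl(\mathcal L(\langle\sigma_1\rangle_N),\nu_\star\bigr)\le c(\be,\gamma)/N$. Relation \eqref{mean_two}--\eqref{apr} expresses $\langle\sigma_N\rangle_N$ exactly as a function of the $(N-1)$-system at dilution $\gamma'$, evaluated over the (random, $\mathrm{Binomial}(N-1,\gamma/N)$) neighbourhood of site $N$. Writing $T_\gamma\equiv T_{\be,h,\gamma}$ and inserting $T_{\gamma'}\bigl(\mathcal L(\langle\sigma_1\rangle_{N-1})\bigr)$ and $T_{\gamma'}(\nu_\star)$ into the triangle inequality: the first discrepancy is $\le C(\be,\gamma')\,d\bigl(\mathcal L(\langle\sigma_1\rangle_{N-1}),\nu_\star\bigr)$ by Proposition~\ref{prop_contraction}(i); the term $d\bigl(T_{\gamma'}\nu_\star,\nu_\star\bigr)=d\bigl(T_{\gamma'}\nu_\star,T_\gamma\nu_\star\bigr)$ is $\le\mathcal K_{\be,\gamma,\gamma'}\,|\gamma-\gamma'|=O(1/N)$ by Proposition~\ref{prop_contraction}(ii); and the remaining error — replacing the Binomial neighbour count by a $\mathrm{Poisson}(\gamma)$ one, and the genuine, weakly dependent neighbour magnetizations by i.i.d.\ ones — is $O(1/N)$ via a Binomial/Poisson coupling and the decoupling estimate of Theorem~\ref{full_ind} applied on the $(N-1)$-system to its $O(1)$-many neighbours (this uses the simultaneous induction, and $\E[r^3]<\infty$ for $r\sim\mathrm{Poisson}(\gamma)$). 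The resulting recursion $a_N\le C(\be,\gamma)\,a_{N-1}+c/N$, with the trivial bound $a_N\le1$ for $N$ below a $(\be,\gamma)$-threshold, yields $a_N\le c(\be,\gamma)/N$.

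\emph{Step 2: the cavity comparison.} To treat the $N$-system I would compare it with a \emph{cavity system} in which sites $1,\dots,k$ carry no bulk edges at all, each $i\le k$ being instead attached to an independent $\mathrm{Poisson}(\gamma)$ number of fresh external spins with the same type of interaction, the external spins being assigned i.i.d.\ $\nu_\star$-magnetizations. By construction the cavity Gibbs measure on $(\sigma_1,\dots,\sigma_k)$ is a product measure, and by the fixpoint identity $\nu_\star=T_{\be,h,\gamma}(\nu_\star)$ together with \eqref{operator} each marginal $\langle\sigma_i\rangle_{\mathrm{cav}}$ has law $\nu_\star$, these being independent for $i\le k$ (disjoint fresh randomness); hence $\langle f(\sigma_1,\dots,\sigma_k)\rangle_{\mathrm{cav}}$ has \emph{exactly} the law of $\E\bigl[f(B_1,\dots,B_k)\mid X\bigr]$. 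Since $g$ is $L_g$-Lipschitz it then suffices to bound $\E\bigl|\langle f\rangle-\langle f\rangle_{\mathrm{cav}}\bigr|$ by $\alpha(\be,\gamma)\|f\|_\infty k^3/N$, with $L_g$ entering linearly as in the statement; the elementary remark that $\xi\mapsto\E[f(B)\mid X=\xi]=\sum_{\e}f(\e)\prod_i\xi_i^{\e_i}(1-\xi_i)^{1-\e_i}$ is $2\|f\|_\infty$-Lipschitz in the $\ell^1$-distance is also convenient for passing between joint and marginal quantities inside Step~1.

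\emph{Step 3: interpolation, and the main obstacle.} I would interpolate from the true to the cavity system along a discrete path that, one edge at a time, removes a genuine bulk edge incident to some $i\le k$ and attaches in its place a fresh external neighbour of $i$ with an independent $\nu_\star$-magnetization, finally coupling the (Binomial) external degree of each $i$ to $\mathrm{Poisson}(\gamma)$. The crux — which I expect to be the main obstacle — is that a single edge modification cannot be estimated in isolation, since its effect on the running Gibbs average of $f$ is only $O\bigl(\be\,e^{O(\be)}\|f\|_\infty\bigr)$ with no gain in $N$; one must instead exploit that a \emph{real} bulk neighbour of $i$ and a \emph{fresh} $\nu_\star$-neighbour of $i$ are interchangeable up to an error $O\bigl(\mathrm{poly}(k)/N\bigr)$: by Step~1 the real neighbour's magnetization differs from $\nu_\star$ in law by $O(1/N)$, and by the decoupling estimate (induction on $N$, with $k$ enlarged by $O(1)$) it is Gibbs-almost-independent of the remaining special sites, while the dependence of the Gibbs average of $f$ on a given neighbour is Lipschitz with constant $O\bigl(\be\,e^{O(\be)}\|f\|_\infty\bigr)$; together these make each swap cost only $O\bigl(\|f\|_\infty\,\mathrm{poly}(k)/N\bigr)$. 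It then remains to count: the number of bulk edges incident to $\{1,\dots,k\}$ is $\approx\gamma k$ with $O(\sqrt k)$ fluctuations, while the ``bad'' events — two special sites joined by an edge, two special sites sharing a bulk neighbour, or a bulk neighbour adjacent to two special sites — have probability $O(k^2/N)$ and contribute at most $O\bigl(\|f\|_\infty k^2/N\bigr)$. Summing the swaps and adding the bad-event contribution, the bookkeeping produces the bound $\alpha(\be,\gamma)\|f\|_\infty L_g\,k^3/N$, with $\alpha(\be,\gamma)$ assembled from $C(\be,\gamma)$, $\bigl(1-C(\be,\gamma)\bigr)^{-1}$, $e^{O(\be)}$ and moments of $\mathrm{Poisson}(\gamma)$ — hence increasing in $\be$ and $\gamma$ and finite precisely on $\{\,C(\be,\gamma)<1\,\}$, as required.
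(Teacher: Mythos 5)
Your plan is structurally different from the paper's. The paper separates two ingredients and combines them by a triangle inequality in Section~\ref{finish_full_ind}: a \emph{quenched} decoupling (Proposition~\ref{spin_ind}, saying $\E\left|\left<f\right>-\left<f\right>_Y\right|$ is $O(k^2/N)$ where $Y$ is the vector of \emph{true} magnetizations), proved via the replica-based swapping Lemma~\ref{swap_one} by a pure induction on $N$ that never mentions $\nu_\star$; and an \emph{annealed} decoupling (Lemma~\ref{mag_ind}, a Wasserstein bound $d(\mathcal L((\left<\s_i\right>)_{i\le k}),\nu_\star^{\otimes k})\le D\,k^3/N$), proved by a second $N$-induction whose input is Proposition~\ref{spin_ind} and the $T$-contraction. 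You instead propose a single edge-by-edge interpolation from the true system to a product ``cavity system'' with frozen $\nu_\star$ external fields, run simultaneously with the one-spin magnetization estimate. You correctly identify the central difficulty: a single edge modification has $O(1)$ effect in itself, with no intrinsic gain in $N$.

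The gap is in how you propose to overcome that difficulty. You assert that the real cavity field produced by a bulk neighbour and the fresh $\nu_\star$ field are interchangeable at cost $O(\mathrm{poly}(k)/N)$, by combining your Step~1 (the unconditional magnetization of the neighbour is close to $\nu_\star$) with ``the decoupling estimate of Theorem~\ref{full_ind}'' on the $(N-1)$-system, interpreted as giving that the neighbour is ``Gibbs-almost-independent of the remaining special sites.'' But Theorem~\ref{full_ind} is not a statement about conditional independence under the quenched Gibbs measure: it controls $\E\,g(\left<f\right>)$ for Lipschitz $g$ and a \emph{single} scalar $\left<f\right>$, and cannot be turned into an $L^1$-bound on $\left<f\right>-\left<f\right>_Y$, nor into a Wasserstein bound on the joint law of $(\left<\s_i\right>)_{i\le k}$, nor into Gibbs-conditional independence of a cavity neighbour given the special spins. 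What your swap-cost argument really needs is exactly the paper's quenched decoupling (Proposition~\ref{spin_ind}, or rather its engine, Lemma~\ref{swap_one} with its self-similar identity \eqref{selfsim2} and telescopic replica decomposition \eqref{decompfs}); this ingredient has no analogue in your plan, and it cannot be bootstrapped from Theorem~\ref{full_ind} by simultaneous induction because it is a strictly stronger, pathwise statement. A related structural point: removing an edge $(i,j)$ does not decouple $\s_j$ from the system, so ``the effective field from $j$ on $i$'' still requires integrating out $\s_j$ — which is really the spin cavity in disguise — and this is precisely why the paper's induction is spin-by-spin and replica-based rather than edge-by-edge. Your Step~1 recursion is sound in spirit (it is essentially the $k=1$ case of Lemma~\ref{mag_ind}), but to close Step~3 you would need to formulate and prove a standalone quenched decoupling lemma; as written, the key step is asserted rather than justified.
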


The decoupling of spins in the large $N$-limit plays a fundamental role in the computation of the free energy in the phase of replica symmetry. Here is the upshot.

\begin{teor} \label{free_energy}
Assume that $\be, \gamma$ are such that $C(\be, \gamma) < 1$. Then the limiting free energy 
\[
f(\be, h, \gamma) \defi \lim_{N\to \infty} f_N(\be, h, \gamma)
\]
exists, and is given by 
\[
f(\be, h, \gamma) = \E \log\left( 1+ e^h \prod_{i\leq r} \left(1- X_i \left( 1-e^{-\be}\right) \right) \right)
+ \frac{\gamma}{2} \E \log\left(1-  \left( 1-e^{-\be} \right) X_1 X_2 \right)\,,
\]
where the $X$'s are independent, $\nu_\star$-distributed, and $r$ is Poisson$(\gamma)$-distributed, independent of all $X$.
\end{teor}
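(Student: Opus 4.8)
The plan is to compute the limiting free energy by the standard telescoping (Aizenman--Sims--Starr) decomposition: write $N f_N = \sum_{M=1}^{N} \bigl( \E\log Z_M - \E\log Z_{M-1} \bigr)$ so that $f(\be,h,\gamma) = \lim_{N\to\infty}\bigl( \E\log Z_N - \E\log Z_{N-1} \bigr)$ provided the increments converge; convergence of the Cesàro means then transfers to the sequence itself. The subtlety here is the dilution mismatch: going from $N$ to $N-1$ vertices, the natural comparison is with the $(N-1)$-system at the slightly increased dilution $\gamma' = \tfrac{N-1}{N}\gamma$, which is exactly the system for which $H_-$ and $\langle\,\rangle_-$ were introduced. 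So I would instead compare $Z_N(\be,h,\gamma)$ with $Z_{N-1}(\be,h,\gamma')$, add and subtract $Z_{N-1}(\be,h,\gamma)$, and use part (ii) of Proposition \ref{prop_contraction} (the continuity estimate in $\gamma$, with $|\gamma-\gamma'| = \gamma/N$) together with a crude a priori bound on $\partial_\gamma f_N$ to show the correction from the dilution shift is $O(1/N)$ and vanishes in the limit.

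Next I would isolate the two genuine contributions to the increment. The first, the \emph{site} (or cavity-field) term, comes from adding spin $\sigma_N$: writing $H(\s) = H_-(\s_1,\dots,\s_{N-1}) + \sigma_N\bigl(h - \be\sum_{i<N} g_{iN}\sigma_i\bigr)$ and tracing over $\sigma_N\in\{0,1\}$ gives
\beq
\E\log Z_N - \E\log Z_{N-1}(\be,h,\gamma') = \E\log\Bigl\langle 1 + \exp\bigl( h - \be\textstyle\sum_{i\leq N-1} g_{iN}\sigma_i\bigr)\Bigr\rangle_- .
\eeq
The random neighbor set $\{i\leq N-1: g_{iN}=1\}$ has size converging weakly to Poisson$(\gamma)$, and by Theorem \ref{full_ind} the joint law of the finitely many magnetizations $\langle\sigma_i\rangle_-$ over those neighbors converges to independent $\nu_\star$-distributed variables (with the product-measure identity used in \eqref{will show up again} to evaluate $\langle e^{-\be\sum\sigma_i}\rangle_-$ as $\prod(1-(1-e^{-\be})\langle\sigma_i\rangle_-)$). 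This yields $\E\log\bigl(1 + e^h\prod_{i\leq r}(1 - X_i(1-e^{-\be}))\bigr)$ in the limit. The second, the \emph{bond} term, accounts for the $\approx N\gamma/2$ edges lost when passing to the smaller graph at dilution $\gamma'$; a standard argument — pairing each removed edge with the change in $\log Z$ it induces, using that adding one edge $g_{ij}=1$ multiplies the Gibbs weight by $e^{-\be\sigma_i\sigma_j}$ — gives a contribution $\tfrac{\gamma}{2}\,\E\log\langle e^{-\be\sigma_1\sigma_2}\rangle$, which by the decoupling of two spins and the product identity equals $\tfrac{\gamma}{2}\,\E\log(1-(1-e^{-\be})X_1X_2)$.

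Concretely I would carry this out in the following order. First, establish the a priori Lipschitz bound $|\partial_\gamma f_N| \leq c(\be)$ (differentiating \eqref{free_energ} in $\gamma$ brings down a bounded-in-absolute-value factor), legitimizing the replacement $\gamma' \to \gamma$ up to $O(1/N)$. Second, derive the exact identity above for the site term and the analogous one for the bond term, so that $\E\log Z_N - \E\log Z_{N-1}$ equals an explicit average over the $(N-1)$-Gibbs measure plus $O(1/N)$. Third, invoke Theorem \ref{full_ind} to replace each such average, which depends on only $O(1)$ spins, by the corresponding expectation under the product measure $\bigotimes\nu_\star$; here one must check the functions involved ($u\mapsto\log(1+e^h\prod(1-u_i(1-e^{-\be})))$ and $u\mapsto\log(1-(1-e^{-\be})u_1u_2)$) are bounded and Lipschitz on $[0,1]^k$ uniformly, with the $k^3/N$ error summable against the Poisson tail of $r$. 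Fourth, pass to the limit and read off the claimed formula, then upgrade the Cesàro convergence of the increments to convergence of $f_N$ itself.

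The main obstacle I anticipate is the third step: controlling the passage from the $(N-1)$-system quenched averages to the product-measure expectation uniformly in the (random) number of neighbors $r$. Theorem \ref{full_ind} gives an error $\alpha(\be,\gamma)\|f\|_\infty L_g\, k^3/N$ for a \emph{fixed} $k$, but here $k=r$ is Poisson and, worse, the relevant test function $g$ (the logarithm) has Lipschitz constant on $[\min f,\max f]$ that can degenerate as $r$ grows if $\be$ is not small — though in the replica symmetry regime $C(\be,\gamma)<1$ one has $e^{-\be}$ bounded away from $0$ badly only for large $\be$, so some care with the range of $\prod(1-u_i(1-e^{-\be}))$, which stays in $[e^{-\be r},1]$, is needed. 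One resolves this by truncating at $r\leq K_N$ with $K_N\to\infty$ slowly (e.g. $K_N = \log N$), bounding the truncated-away contribution by the Poisson tail times a crude deterministic bound on $\log Z$-increments ($\leq h + \be r$), and choosing $K_N$ so that $K_N^3/N\to 0$ while the Poisson tail beyond $K_N$ is negligible. A secondary, more bookkeeping-type difficulty is making the bond-term heuristic rigorous — identifying precisely which edges are "removed" under $\gamma\to\gamma'$ and showing the induced change in $\E\log Z$ telescopes to the clean expression $\tfrac{\gamma}{2}\E\log\langle e^{-\be\sigma_1\sigma_2}\rangle$ with only $O(1/N)$ slack; this is where the specific coupling of the $N$- and $(N-1)$-disorders (via $\gamma' = \tfrac{N-1}{N}\gamma$) is essential and must be spelled out.
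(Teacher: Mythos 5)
Your proposal follows essentially the same route as the paper: telescope the unnormalized free energy, split each increment into a site term $A_N = F_N(\gamma)-F_{N-1}(\gamma')$ and a bond term $B_N = F_{N-1}(\gamma)-F_{N-1}(\gamma')$, trace over $\sigma_N$ for the site term, couple the $\gamma$- and $\gamma'$-disorders edge-by-edge for the bond term, and invoke Theorem \ref{full_ind} together with Proposition \ref{prop_contraction}(ii) to pass to the limit. Two slips in your write-up deserve a warning. First, the dilution-shift term $B_N$ does \emph{not} vanish: it is $O(1)$, not $O(1/N)$. A Lipschitz bound on $\partial_\gamma f_N$ only shows the normalized difference $f_{N-1}(\gamma)-f_{N-1}(\gamma')$ is $O(1/N)$; the increment that enters the telescope is the \emph{unnormalized} $B_N = (N-1)\bigl(f_{N-1}(\gamma)-f_{N-1}(\gamma')\bigr)$, which is order one and is precisely the bond term you go on to compute. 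So the clause ``and vanishes in the limit'' must be dropped, or it would cost you the whole $\tfrac{\gamma}{2}\E\log(\cdot)$ contribution. Second, ``$\approx N\gamma/2$ edges lost'' is off by a factor of $N$: the expected number of edges that differ between the $\gamma$- and $\gamma'$-disorders on $N-1$ vertices is $\approx \gamma/2$, and it is this $\gamma/2$ that produces the prefactor. Finally, your truncation at $r\leq K_N = \log N$ to tame the degenerating Lipschitz constant of the logarithm in the bond term is a legitimate alternative to what the paper actually does, which is to absorb the factor $e^{\beta S}$ into the error and verify directly that $\E\bigl[S^3 e^{\beta S}\bigr]$ stays bounded uniformly in $N$ (Lemma \ref{binomialcomp}); both work, the paper's being slightly more economical.
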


The proof of Theorem \ref{free_energy} is given in Section \ref{free_energy_proof}: it relies on the cavity method, i.e. on integrating out one spin at a time, and on Proposition \ref{prop_contraction} and Theorem \ref{full_ind}. The simple proof of  Proposition \ref{prop_contraction}  is given 
in Section \ref{proof_T}. The proof of Theorem \ref{full_ind}, being technically involved, is deferred to Section \ref{proof}. 
Before that, we however briefly discuss what {\it might} happen for large $\be$. Perhaps not surprisingly, the Parisi Theory \cite{giogio} suggests a behavior of stunning intricacy. 

\subsection{The phase of broken replica symmetry: a fact, and a conjecture} \label{facconj}
We unfortunately need an arsenal of notations, concepts and definitions. Let $K \in \N$. Recall that $\mathcal M_1$ stands for the space of probability measures on $[0,1]$. For $i =1 \dots K+1$ we define inductively $\mathcal M_i$ as the space of probability measures on $\mathcal M_{i-1}$. 
\begin{Def}
A measure $\zeta \in \mathcal M_{K+1}$ is a \emph{$K$-level directing measure}. 
\end{Def}
Here is another definition; the reason for the terminology will become clear below. We denote by $\boldsymbol i^{l} \defi (i_1, \dots, i_l) \in \N^{l}$ a multi-index of length $l \in \N$. 
\begin{Def}
The \emph{quenched magnetizations driven by the directing measure $(\zeta, K)$} is a collection of 
random variables $X^{\boldsymbol{i}^{(K)}} \in [0,1]$ which are constructed as follows. Consider first a $\zeta$-distributed random variable, denoted by  $\eta_\emptyset$ and inductively construct the array of random variables $\left( \eta_{\boldsymbol{i}^{(l-1)}, j}^{(l)}, {j \in \N}\right)$ : these are assumed to be independent and $\eta_{{\boldsymbol i}^{l-1}}^{(l-1)}$-distributed. We then set $X^{\boldsymbol{i}^{(K)}} \defi \eta^{{(K)}}_{\boldsymbol{i}^{K}}$.
\end{Def}

We also need to recall the so-called Derrida-Ruelle cascades \cite{ruelle}. These are point process on  
$[0,1]$ with an in-built tree-like (hierarchical) structure. 

\begin{Def} Consider an array ${\boldsymbol m}^K = \left( m_1, m_2, \dots, m_K \right)$ where $0 < m_1 < m_2 <\dots < m_K < 1$. For  any ${\boldsymbol i}^{l-1}$  we denote by $(e^{(l)}_{{\boldsymbol i^{l-1}, j} })_{j \in \N}$ a Poisson point process on $\R_+$ with intensity $t^{-m_l -1} dt$; the point processes $(e^{(l)}_{{\boldsymbol i^{l-1}, j} })_{j \in \N}$ and $(e^{(l)}_{{\boldsymbol k^{l-1}, j} })_{j \in \N}$ are independent as soon as $\boldsymbol i^{l-1}
\neq \boldsymbol k^{l-1}$. The "levels" $e^{(k)}$ and $e^{(l)}$ are also assumed to be independent as soon as $k\neq l$. We define the point process
$E = \left( e_{\boldsymbol i} \right)_{\boldsymbol i \in \N^K}$, where $e_{\boldsymbol i} \defi e^{(1)}_{i_1}\cdot e^{(2)}_{i_1, i_2}\cdots e^{(K)}_{i_1, \dots, i_K}$.
A $K$-levels \emph{Derrida-Ruelle cascade} with parameters $\boldsymbol m$ is the point process 
\[
\mathcal V_{\boldsymbol m} \defi \left( v_{\boldsymbol \delta} \right)_{\boldsymbol \delta \in \N^K}, \quad \text{where}\;\; v_{\boldsymbol \delta} \defi \frac{e_{\boldsymbol \delta}}{\sum_{\boldsymbol \tau \in \N^K} e_{\boldsymbol \tau}}.
\]
\end{Def}
We are not done with definitions. The following will play an absolutely crucial role. 
\begin{Def}
A K-levels \emph{M\'{e}zard-Parisi structure} is a couple $(\zeta, \mathcal V_{\boldsymbol m})$ consisting  of a K-levels directing measure $\zeta$, and a $K$-levels Derrida-Ruelle cascade $\mathcal V_{\boldsymbol m}$ which is independent of $\zeta$.
\end{Def}
\vspace{0.4cm}
Let us assume henceforth to be given an MP-structure $(\zeta, \mathcal V_{\boldsymbol m} = (v_{\boldsymbol \delta})_{\boldsymbol \delta})$. Recalling that $(\be, h, \gamma)$ are the parameters associated to the ISP (inverse of temperature/magnetic field/dilution), and for $t \in [0,1]$, we consider the interpolating Hamiltonian
\[
H^{\boldsymbol \delta}_{N,t}(\sigma) \defi -\beta \sum\limits_{1\leq i<j \leq N} g^*_{i,j} \sigma_i \sigma_j + \sum\limits_{i,j=1}^N \hat{g}_{i,j} \sigma_i \log \left<e^{-\beta\varepsilon}\right>_{X^{\boldsymbol \de}_{i,j}}+ h\sum\limits_{i=1}^N \sigma_i 
\]
where:
\begin{itemize}
\item $\boldsymbol \delta \in \N^K$ is a multi-index.
\item The $g^*$ are Bernoulli$(\gamma t/N)$-distributed, the $\hat{g}$ are Bernoulli$\left(\gamma (1-t)/N\right)$- distributed, all independent. 
\item The $(X^{\boldsymbol \delta}_{i,j})_{\boldsymbol \delta}$ are independent quenched magnetizations driven by the $K$-levels directing measure $\zeta$, all independent, and independent of the $g^*$ as well as the $\hat g$. (The $\vare$ appearing in the logarithm is of course a spin taking values $0$ or $1$).
\end{itemize}
We define the interpolating Gibbs measure on $\Sigma_N \times \N^K$ according to 
\[
 \mathcal{G}_t(\sigma,\boldsymbol \delta) :=  \frac{ v_{\boldsymbol \delta} \exp\left(H^{\boldsymbol \delta}_{N,t}(\sigma)\right)}{\mathcal Z},
\]
where $\mathcal Z$ is the obvious normalization. We write $\left<\right>_{t}$ for expectation w.r.t. $\mathcal{G}_t$, and $\left<\right>_{t}^{\otimes n}$  for expectation w.r.t. $\mathcal{G}_t^{\otimes n}$. We also introduce the "interpolating free energy" 
\[
\varphi(t):= \frac{1}{N}\E \log \sum\limits_{\boldsymbol{\delta}\in \N^K}\sum\limits_{\sigma \in \Sigma_N} v_{\boldsymbol{\delta}} \exp\left(H^{\boldsymbol{\delta}}_{N,t}(\sigma)\right).
\]
and the {\it M\'{e}zard-Parisi functional}
\[
\text{\sf MP}_{\be, h, \gamma}(\zeta, \mathcal V_{\boldsymbol m}) \defi 
\E \log \sum\limits_{\boldsymbol \delta\in \N^K} v_{\boldsymbol \delta} \left(1+ e^h \prod\limits_{j=1}^r\left<e^{-\beta \varepsilon}\right>_{X^{\boldsymbol \delta}_{j}}\right) - \frac{\gamma}{2}\E \log \sum\limits_{\boldsymbol \delta \in \N^K} v_{\boldsymbol \delta }  \left<e^{-\beta\varepsilon_1\varepsilon_2}\right>_{\left(X^{\boldsymbol \delta}_1,X^{\boldsymbol \delta}_2\right)} 
\]
In the above, $r$ is a Poisson$(\gamma)$ random variable which is independent of everything else and the $X$ are the quenched magnetizations driven by the $K$-level directing measure (independent of each other for different subindeces, and "hierarchically dependent" what pertains the superindeces). Remark that the MP-functional does not depend on the size of the system $N$.

With these definitions, by the fundamental theorem of calculus,
\[
\varphi(1) = \varphi(0) + \int_0^1 \varphi'(t) dt\,.
\]
But for $t=1$ the Hamiltonian $H^{\boldsymbol \delta}_{N, t= 1}$ coincides with the original ISP, hence
 \beq \label{int_f}
f_N(\be, h, \gamma) = \varphi(0) + \int_0^1 \varphi'(t) dt\,,
\eeq
since $\sum_{\boldsymbol \de} v_{\delta}  = 1$. Under the light of \eqref{int_f}, it would be useful to get a handle on $\varphi(0)$ and $\varphi'(t)$. The computations behind this step are straightforward, but long: they are postponed to the Appendix. Here we shall simply state the upshot, relating \eqref{int_f} and the PM-functional just introduced. 

\begin{fact} \label{interpolation} For any $N\in \N$, $(\be, h, \gamma)$ and K-levels MP-structure $(\zeta, \mathcal V_{\boldsymbol m})$, it holds: 
\beq \label{free_lowtemp}
f_N(\be, h, \gamma) = \text{\sf{MP}}_{\be, h, \gamma}(\zeta,  \mathcal V_{\boldsymbol m})  + R_{N, \be, h, \gamma}(\zeta, \mathcal V_{\boldsymbol m}), 
\eeq
where the "rest-term" is given by
\beq \label{restterm_hell} \bea 
R_{N, \be, h, \gamma}(\zeta, \mathcal V_{\boldsymbol m}) = -\frac{\gamma}{2}\sum\limits_{n=1}^{\infty} \frac{\left(e^{-\beta}-1\right)^n}{n} \int_0^1 dt \, \E \left<\left(\frac{1}{N}\sum\limits_{i\leq N}\prod\limits_{l=1}^n \sigma^l_i- \E_X\prod\limits_{l\leq n}X^{\delta^l}\right)^2 \right>^{\otimes n}_t.
\eea \eeq
\end{fact}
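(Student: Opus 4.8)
\emph{Proof proposal.} The plan is to extract Fact~\ref{interpolation} from the interpolation $t\mapsto\varphi(t)$ by the Guerra / Aizenman--Sims--Starr / Franz--Leone scheme, carrying all remainder terms along rather than merely their sign. For $t=1$ the $\hat g$-term disappears and $H^{\boldsymbol\delta}_{N,1}$ no longer depends on $\boldsymbol\delta$, so (using $\sum_{\boldsymbol\delta}v_{\boldsymbol\delta}=1$) one gets $\varphi(1)=f_N(\be,h,\gamma)$; by the fundamental theorem of calculus $f_N=\varphi(0)+\int_0^1\varphi'(t)\,dt$, and it remains to compute the ``pure-cavity'' endpoint $\varphi(0)$ and the density $\varphi'(t)$.

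Only the Bernoulli success probabilities $\gamma t/N$ (of the $g^*$) and $\gamma(1-t)/N$ (of the $\hat g$) depend on $t$. For a functional $F$ of finitely many independent Bernoulli variables with $t$-dependent rates one has the elementary identity $\frac{d}{dt}\E F=\sum_k p_k'(t)\,\E[\Delta_k F]$, where $\Delta_k F$ is the change in $F$ when the $k$-th variable is switched from $0$ to $1$. Applying this with $F=\tfrac1N\log\mathcal Z$ and using permutation symmetry of the sites, $\varphi'(t)$ is the sum of a term of weight $\tfrac\gamma2\bigl(1+O(1/N)\bigr)$ --- the $\E\langle\cdot\rangle_t$-average over a uniform unordered pair $(i,j)$ of $\log\langle e^{-\be\sigma_i\sigma_j}\rangle_t$, coming from switching on one $g^*$-bond --- and a term of weight $-\gamma\bigl(1+O(1/N)\bigr)$ --- the $\E\langle\cdot\rangle_t$-average over a uniform ordered pair $(i,j)$ of $\log\bigl\langle\langle e^{-\be\varepsilon}\rangle_{X^{\boldsymbol\delta}_{i,j}}^{\sigma_i}\bigr\rangle_t$, coming from switching on one $\hat g$-field.

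Both logarithms have argument $1-(1-e^{-\be})u$ with $u\in[0,1]$, so one expands $\log\bigl(1-(1-e^{-\be})u\bigr)=-\sum_{n\ge1}\tfrac{(1-e^{-\be})^n}{n}u^n$ (geometrically convergent, uniformly in $t$, since $1-e^{-\be}<1$) and rewrites each power as an $n$-replica average, e.g. $\langle\sigma_i\sigma_j\rangle_t^n=\langle\prod_{l\le n}\sigma_i^l\sigma_j^l\rangle_t^{\otimes n}$ for the $g^*$-term and analogously for the $\hat g$-term. Summing over the site indices and integrating out the fresh, independent fields $X^{\boldsymbol\delta}_{i,j}$ turns these into the empirical $n$-overlap $\tfrac1N\sum_{i\le N}\prod_{l\le n}\sigma_i^l$, squared (from the $g^*$-term) or multiplied once by $\E_X\prod_{l\le n}X^{\delta^l}$ (from the $\hat g$-term), up to diagonal and $\binom N2$-versus-$N^2$ corrections of relative order $1/N$. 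Completing the square,
\[
\Bigl(\tfrac1N\!\sum_{i}\prod_{l}\sigma_i^l\Bigr)^2-2\Bigl(\tfrac1N\!\sum_{i}\prod_{l}\sigma_i^l\Bigr)\E_X\!\prod_{l}X^{\delta^l}=\Bigl(\tfrac1N\!\sum_{i}\prod_{l}\sigma_i^l-\E_X\!\prod_{l}X^{\delta^l}\Bigr)^2-\Bigl(\E_X\!\prod_{l}X^{\delta^l}\Bigr)^2,
\]
so $\varphi'(t)$ splits into a piece that, after integrating in $t$, is precisely $R_{N,\be,h,\gamma}(\zeta,\mathcal V_{\boldsymbol m})$, plus a residual piece no longer involving the spins.

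It then remains to identify $\varphi(0)$ together with this residual, purely directing-measure piece with $\text{\sf MP}_{\be,h,\gamma}(\zeta,\mathcal V_{\boldsymbol m})$. At $t=0$ the $g^*$-interaction is absent, the trace over $\sigma$ factorizes over the sites, $\varphi(0)=\tfrac1N\E\log\sum_{\boldsymbol\delta}v_{\boldsymbol\delta}\prod_{i\le N}\bigl(1+e^h\prod_{j:\hat g_{i,j}=1}\langle e^{-\be\varepsilon}\rangle_{X^{\boldsymbol\delta}_{i,j}}\bigr)$, and the quasi-stationarity (self-similarity) of the Derrida--Ruelle cascade $\mathcal V_{\boldsymbol m}$ collapses the $N$-fold product under $\sum_{\boldsymbol\delta}v_{\boldsymbol\delta}$, identifying $\varphi(0)$ with the first (site) term of $\text{\sf MP}$ once the cavity-field multiplicity is matched with the Poisson$(\gamma)$ variable $r$; the same invariance, together with the resummation $-\sum_n u^n/n=\log(1-u)$, turns the residual piece $\tfrac\gamma2\sum_n\tfrac{(1-e^{-\be})^n}{n}\int_0^1\E\langle(\E_X\prod_l X^{\delta^l})^2\rangle_t^{\otimes n}\,dt$ into the second ($-\tfrac\gamma2$) term of $\text{\sf MP}$. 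Collecting the three contributions gives $f_N=\text{\sf MP}_{\be,h,\gamma}(\zeta,\mathcal V_{\boldsymbol m})+R_{N,\be,h,\gamma}(\zeta,\mathcal V_{\boldsymbol m})$. The conceptual content is classical; the main obstacle is the bookkeeping. Since the identity is \emph{exact for every finite $N$}, every $O(1/N)$ correction (the $i=j$ diagonal, $\binom N2$ versus $N^2$, the Binomial-versus-Poisson nature of the cavity-field counts) must be shown to cancel identically rather than merely asymptotically; the signs in the power-series expansion must be tracked precisely; and the two identifications with the $\text{\sf MP}$-functional in the last step rest on the exact invariance of the Ruelle cascades together with a Fubini-type interchange of $\sum_n$, $\int_0^1 dt$ and $\E$, licensed by the geometric bound $1-e^{-\be}<1$.
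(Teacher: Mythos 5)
Your proposal follows essentially the same route as the paper: the Guerra--Aizenman--Sims--Starr / Franz--Leone interpolation $\varphi(t)$, the identity $\frac{d}{dt}\E F=\sum_k p_k'(t)\E[\Delta_k F]$ for Bernoulli rates (realized in the paper via an explicit coupling of $g^{*t}$ and $g^{*t+u}$), the power-series expansion $\log(1-(1-e^{-\beta})u)=-\sum_n(1-e^{-\beta})^n u^n/n$, the replica rewriting $\langle\cdot\rangle^n=\langle\prod_l\cdot\rangle^{\otimes n}$, the completion of the square, and the identification of $\varphi(0)$ and the residual via the RPC invariance together with the $r\sim$Poisson$(\gamma)$ match. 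Conceptually you have reproduced the paper's Lemma~\ref{lem_interp1} and Fact~\ref{interpolation} proof.

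One substantive remark, and it concerns both your proposal and the paper's own argument. You correctly observe that, since Fact~\ref{interpolation} is stated as an exact finite-$N$ identity, every $O(1/N)$ discrepancy --- the $i=j$ diagonal in the double sum, $\binom N2$ versus $N^2/2$, the Binomial-vs.-Poisson cavity-field count, the $(1-g^{*t}_{i,j})$ and $(1-\hat g^t_{i,j})$ factors dropped from $\varphi'(t)$ --- ``must be shown to cancel identically rather than merely asymptotically''. You flag this as bookkeeping and do not carry it out. Be aware that the paper's Appendix does not carry it out either: Lemma~\ref{lem_interp1} delivers $\varphi'(t)$ only up to $o(1)$, the $\varphi(0)$-computation invokes weak convergence to Poisson and ``standard compactness arguments'' (again $o(1)$), and the final display~\eqref{interpolation_calc1} keeps an explicit ``$+o(1)$'' that is never reabsorbed into $R_{N,\be,h,\gamma}$. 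So as written, both the paper's proof and yours only establish $f_N=\text{\sf MP}+R_N+o(1)$, not the literal equality of Fact~\ref{interpolation}. To get an exact identity one would have to define $R_N$ so as to contain these finite-$N$ corrections, or replace ``$=$'' by ``$=\dots+o(1)$'' in the statement. This does not affect the intended use of the Fact (Conjecture~\ref{conj_hell} is about the $N\to\infty$ limit anyway), but it is a genuine gap in the literal statement, shared by the source.
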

(The superindeces on the r.h.s. above, i.e. those in $\s^l, l = 1 \dots n$, refer to configurations drawn from $\mathcal G_t^{\otimes n}$.) We can now finally move to 

\begin{conj} \label{conj_hell} To given $(\be, h, \gamma)$ there exists a unique $K^\star$-levels MP-structure $
\mathcal S^\star = \mathcal S^\star(\be, h, \gamma) \defi (\zeta^\star, \mathcal V^\star_{\boldsymbol m})$ such that 
\[
\lim_{N\to \infty} R_{N, \be, h, \gamma}(\mathcal S^\star) = 0\,.
\]
(The case $K^\star= \infty$ is also possible!)
\end{conj}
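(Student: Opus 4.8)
\textbf{Proof proposal for Conjecture \ref{conj_hell}.}

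The plan is to recognize the right-hand side of \eqref{free_lowtemp} as a variational object and to argue that the rest-term vanishes exactly at the optimizer. Fact \ref{interpolation} tells us that for \emph{every} MP-structure $(\zeta, \mathcal V_{\boldsymbol m})$ one has $f_N = \text{\sf MP}_{\be,h,\gamma}(\zeta, \mathcal V_{\boldsymbol m}) + R_{N,\be,h,\gamma}(\zeta, \mathcal V_{\boldsymbol m})$; since $f_N$ is \emph{intrinsic} (it does not see the MP-structure at all) while $\text{\sf MP}$ does not depend on $N$, the function $(\zeta,\mathcal V_{\boldsymbol m}) \mapsto R_{N,\be,h,\gamma}(\zeta,\mathcal V_{\boldsymbol m})$ is, up to the $N$-independent constant $f_N$ minus its own $N\to\infty$ limit, essentially $-\,\text{\sf MP}$. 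The first step is therefore to establish that $f(\be,h,\gamma) := \lim_N f_N$ exists (for instance by a superadditivity/subadditivity argument on $N\log Z_N$, or by the interpolation itself), so that $R_N$ converges pointwise to $R_\infty := f - \text{\sf MP}$. Then $\lim_N R_N(\mathcal S)=0$ for some $\mathcal S$ is equivalent to $\text{\sf MP}_{\be,h,\gamma}(\mathcal S) = f(\be,h,\gamma)$.

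The second step is the sign of the rest-term. Looking at \eqref{restterm_hell}, each summand is a square, so the whole series would be a manifestly signed quantity \emph{were it not} for the prefactor $(e^{-\be}-1)^n/n$, which alternates in sign (here $e^{-\be}-1 \in (-1,0]$). So $R_N$ is an alternating series of nonnegative terms; one wants to argue, via a Guerra-type convexity/monotonicity in the overlap — the standard trick of writing $(e^{-\be}-1)^n$ through the Fourier/Taylor representation of $e^{-\be \vare_1\vare_2}-1$ and using that $\sum_n x^n/n = -\log(1-x)$ is, on the relevant range, a convex combination of moments — that $R_N$ is in fact one-signed (presumably $R_N \le 0$, i.e. $\text{\sf MP}$ is an upper bound for $f_N$, mirroring Franz--Leone \cite{franz}). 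Granted a definite sign, $\lim_N R_N(\mathcal S)=0$ becomes the assertion that $\mathcal S$ is a global optimizer of the MP-functional, and the conjecture reduces to: (a) the infimum $\inf_{(\zeta,\mathcal V_{\boldsymbol m})} \text{\sf MP}_{\be,h,\gamma}(\zeta,\mathcal V_{\boldsymbol m})$ (over all $K$ and all MP-structures) equals $f(\be,h,\gamma)$; (b) the infimum is attained; (c) the minimizer is unique.

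For (a) one would run the Aizenman--Sims--Starr / Panchenko--Talagrand cavity comparison: show $\text{\sf MP}$ is attained in the limit along a near-optimal sequence of structures built from the actual Gibbs measure of the $N$-system (its asymptotic overlap structure is ultrametric by the Panchenko ultrametricity theorem, hence encodable by a directing measure plus a Ruelle cascade), giving $\inf \text{\sf MP} \le f$; combined with the one-sided bound from Step 2 this forces equality. For (b), attainment, one needs a compactness argument on the space of MP-structures — tightness of directing measures on the (compact) $\mathcal M_{K+1}$ together with lower semicontinuity of $\text{\sf MP}$ in the appropriate topology, plus control of the number of levels $K^\star$ (which may genuinely be $+\infty$, in which case "attainment" must be read as attainment in a suitable completion, exactly the subtlety flagged in the statement). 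For (c), uniqueness, one would hope for strict convexity of the functional along the natural (Parisi-type) interpolations; this is the part I expect to be \emph{the main obstacle}, since strict convexity of Parisi-type functionals is delicate even in the classical SK case and is genuinely open for diluted models — so a complete proof of uniqueness is likely out of reach with current technology, which is precisely why the statement is offered as a conjecture rather than a theorem. A realistic write-up would prove (a) and the variational characterization unconditionally, reduce (b) to a tightness statement, and isolate (c) as the outstanding analytic input.
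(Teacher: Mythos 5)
The statement you are addressing is a \emph{conjecture}: the paper does not prove it, and it is offered precisely because no proof is available. So there is no proof of the paper's to compare yours against. You recognize this correctly, and your write-up is an honest roadmap rather than a purported proof; that is the right reading of the situation. Your decomposition of the problem — (a) the MP-functional characterizes the limit free energy variationally, (b) the infimum is attained, (c) it is attained uniquely — is a sensible and faithful reflection of how such a result would be structured, and the auxiliary technology you invoke (Franz--Leone bounds, Aizenman--Sims--Starr/Panchenko--Talagrand cavity comparison, Panchenko ultrametricity and hierarchical exchangeability) is exactly the toolkit the paper itself points to in Section \ref{facconj} as the relevant background.

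The one place where your roadmap is too optimistic is the sign step. You correctly observe that the $n$-th term of $R_N$ in \eqref{restterm_hell} is a square multiplied by $(e^{-\beta}-1)^n/n$, which alternates in sign for $\beta>0$, and you then say that the ``standard trick'' (Fourier/Taylor representation of the interaction plus a convexity argument) should restore a definite sign. In fact this is the crux of why no proof exists. In the Franz--Leone and Panchenko--Talagrand settings the rest term comes out manifestly one-signed because of the specific algebraic structure of the interaction ($K$-SAT, even $p$-spin); for the hard-core/antiferromagnetic interaction of the ISP the coefficients genuinely alternate, and obtaining a one-sided bound ($\text{\sf MP}\geq f_N$, say) by interpolation is itself an open problem, not a routine application of a known trick. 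So the obstacle is already at step (a)/step 2 of your plan, not just at uniqueness. You are of course right that uniqueness (strict convexity of the functional) is also open even in the classical SK case. Stating that a ``realistic write-up would prove (a) and the variational characterization unconditionally'' therefore overclaims: (a) is itself conjectural here, and the paper's phrasing as a conjecture is meant to cover all three of your items, not just (c).
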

\noindent This conjecture would imply that the limiting free energy of the ISP is given by 
\[
f(\be, h, \gamma) = \text{\sf{MP}}_{\be, h, \gamma}(\mathcal S^\star)\,,
\]
with $\mathcal S^\star$ the (unique) Parisi-M\'{e}zard structure associated to the parameters $(\be, h, \gamma)$. As explained in the introduction, this would also yield a solution of the ISP in case of infinite Erd\H{o}s-R\'{e}nyi random graphs, for {\it any} dilution-parameter $\gamma$. Indeed, it would hold that 
\[
\texttt{ISP}(\gamma) = \lim_{h \uparrow \infty} \lim_{\be \uparrow \infty} \frac{\text{\sf{MP}}_{\be, h, \gamma}(\mathcal S^\star)}{h}\,.
\]

For this, we should focus the attention on the $R_N$-term  in \eqref{restterm_hell}. A moment's thought suggests that,  {\it should} this term indeed vanish (for well chosen MP-structure), the following picture emerges: under the Gibbs measure, finitely many spins behave like a {\it mixture} of random variables! Slightly more precisely, it would follow that {\it given} a realization of the quenched magnetizations driven by the directing measure, spins are independent. Under the light of exchangeability and de Finetti-type theorems, see e.g. \cite{aldous}, such a result is perhaps not really surprising, half-jokingly: 
{\it if not mixtures, what else?} What is way less obvious is that the de Finetti measure driving the mixture should be ultrametric, i.e. hierarchically organized (a property which is inherited from the Derrida-Ruelle cascades). We have no convincing explanation for this: it is simply in line with the {\it  M\'{e}zard-Parisi Ansatz} \cite{giogio_mezard} for diluted models. For more on the role of exchangeability in spin glasses (mean field or diluted) with a particular focus on hierarchical structures, see 
\cite{panchy_four}.  

A rigorous approach to the M\'{e}zard-Parisi Ansatz, of which our conjecture is but one concrete case, is laid out in the works \cite{panchy, panchy_one, panchy_two, panchy_three, panchy_arxiv}. The approach is based on
many ingredients, such as perturbations of the ISP-Hamiltonian, the Ghirlanda-Guerra identities \cite{gg},  computations {\it \`{a} la  Aizenman-Sims-Starr} \cite{ass}, ultrametricity \cite{panchy}, exchangeability \cite{panchy_two}, Franz-Leone \cite{franz} upper-bounds, etc. It would take too long to explain any of this in detail, so we refer in particular to the introduction of \cite{panchy_arxiv} for an overview. 

We finally point out that the appeal of ultrametricity goes well beyond what may be perceived as some form of  aesthetic beauty. In fact, a {\it hands-on} approach to the issue, by this we mean a disorder-dependent construction of the "growing tree",  would have considerable impact on applications:  it would open the gate towards efficient {\it algorithms} for the construction of the maximal independent set, for given realization of the Erd\H{o}s-R\'{e}nyi random graph. The latter problem is naturally way more challenging than the mere (...) computation of the free energy. (The above conjecture, with the complexity lying underneath the surface, should   be seen as a cautionary note). Progress on this type of questions is yet nowhere in sight. 
To date, interpolations {\it \`{a} la Guerra} are the finest weapons available to address the low temperature behavior of spin glasses (be it diluted or mean field). In a wealth of models, these tools have proven tremendously effective for the computation of extensive quantities such as free energy, entropy, etc. Unfortunately, they also possibly change irreparably the models as far as the finer quantities are concerned.  \\

The remainder of the paper is devoted to the  proofs. 

\section{The free energy} \label{free_energy_proof}
In this section we give a  proof of Theorem \ref{free_energy} {\it assuming} Proposition \ref{prop_contraction} and Theorem \ref{full_ind}. As mentioned, the approach is based on integrating out one spin at a time (creating cavities), and exploiting the asymptotical decoupling. Some caution is needed, since the procedure of creating cavities induces small (but relevant) changes in the dilution-parameter: this is taken care by a telescopic decomposition. Precisely, denoting by $F_N(\be, h, \gamma)$ the {\it unnormalized} free energy, we write
\beq \label{tele} \bea
f_N(\be, h, \gamma) & = \frac{1}{N} \sum_{i=2}^N \left( F_i(\be, h, \gamma)  - F_{i-1}\left(\be, h, \frac{i-1}{i} \gamma \right) \right) \\
& \hspace{2.5cm} - \left( F_{i-1}(\be, h, \gamma) -  F_{i-1}\left(\be, h, \frac{i-1}{i} \gamma \right)  \right) + \frac{F_1(\be, h, \gamma)}{N}\,.
\eea \eeq
Recalling that $\gamma' = \frac{N-1}{N} \gamma$, we shorten
\beq \bea
& A_N \defi F_N(\be, h, \gamma)  - F_{N-1}\left(\be, h, \gamma' \right), \\
& B_N \defi F_{N-1}(\be, h, \gamma)  - F_{N-1}\left(\be, h, \gamma' \right).
\eea \eeq
With this notation, it follows from \eqref{tele} that
\beq
\lim_{N\to \infty} f_N(\be, \gamma, h) = \lim_{N\to \infty} A_N - \lim_{N \to \infty} B_N,
\eeq 
provided that both $A$- and $B$-limits exist. 

\subsection{The $A$-limit}
It holds
\beq \label{a_one}
A_N = \E \log \frac{\sum_{\s \in \Sigma_N} \exp H(\s)}{\sum_{\s \in \Sigma_{N-1}} \exp H_-(\s)}.
\eeq
We now proceed along the lines of \eqref{mean_one} and \eqref{mean_two}, i.e. we write $H(\s)$ in terms of 
$H_-(\s_1, \dots, \s_{N-1})$, and perform the trace over $\s_N\in \{0,1\}$. Equation \eqref{a_one} then reads 
\beq \label{a_two}
A_N = \E \log\left(1 + \left< \exp\left( h- \be \sum_{i\leq N-1} g_{i, N} \s_i \right)\right>_- \right)
\eeq
As the randomness in the Gibbs measure is independent of the $g_{i,N}$, and since the Gibbs measure is invariant (in distribution) under permutations of spins, we have in fact that
\beq \label{a_three}
A_N = \E \log\left(1 + \left< \exp\left( h- \be \sum_{i\leq S} \s_i \right)\right>_- \right),
\eeq
where $S = \sum\limits_{i<N} g_{i,N}$. 

We introduce 
\beq \label{ffunc}
f_k(x_1,..,x_k) \defi \exp\left( h- \be \sum_{i\leq k} x_i \right)\,,
\eeq
and 
\beq \label{gfunc}
g(x)\defi \log\left(1+x\right).
\eeq
With this notation, and integrating out $S$, \eqref{a_three} yields
\beq \bea \label{a_four}
A_N & = \sum\limits_{k=0}^{N-1}\PP(S=k)\E \log\left(1 + \left< \exp\left( h- \be \sum_{i\leq k} \s_i \right)\right>_- \right) \\
& =  \sum\limits_{k=0}^{N-1}\PP(S=k)\E g\left( \left<f_k(\s_1,..,\s_k)\right>_{-}\right).
\eea \eeq
Now denote by $X'=\left(X'_1,..,X'_k\right)$ a vector of independent $\nu_\star(\be,h,\gamma')$-distributed random variables, and conditionally on $X'$,  consider $B_1,..,B_k$  independent, Bernoulli($X'_i$) distributed random variables. We rewrite \eqref{a_four} as 
\beq \label{a_five}
A_N = \sum\limits_{k=0}^{N-1}\PP(S=k)\E g\left(\E\left[ f_k(B_1,..,B_k)|X'\right] \right)+r_N,
\eeq
where
\beq
r_N \defi \sum\limits_{k=0}^{N-1}\PP(S=k) \left(\E g\left( \left<f_k(\s_1,..,\s_k)\right>_{-}\right)- \E g\left(\E\left[ f_k(B_1,..,B_k)|X'\right] \right) \right),
\eeq
Applying the triangle inequality and Theorem \ref{full_ind} with parameters $N-1,\be,\gamma',h$ for each $k$ gives
\beq \bea \label{a_seven}
\left|r_N\right| & \leq \sum\limits_{k=0}^{N-1}\PP(S=k)\left| \E g\left(\E\left[ f_k(B_1,..,B_k)|X'\right] \right)-\E g\left(\left< f_k(B_1,..,B_k)\right>_-  \right)\right| \\
& \leq \sum\limits_{k=0}^{N-1}\PP(S=k) \alpha(\beta,\gamma') \|f_k\|_\infty L_g \frac{k^3}{N-1}.
\eea \eeq
We observe that all $f_k$ introduced in \eqref{ffunc} map to a subset of $\R^+$: restricted to this set, the function $g$ introduced in \eqref{gfunc} is $L_g$-Lipschitz with $L_g=1$. Furthermore, all $f_k$ are bounded by $1$, and $\alpha(\beta,\gamma') \leq \alpha(\beta,\gamma)$ since $\gamma'\leq \gamma$. This yields 
\beq \label{boundingS}
\left| r_N\right| \leq \frac{\alpha(\beta,\gamma)}{N-1}\sum\limits_{k=0}^{N-1}\PP(S=k)k^3= \frac{\alpha(\beta,\gamma)}{N-1} \E S^3 = o(1) \quad (N\to \infty)
\eeq
the last step since $S$ is a Binomial($N-1, \frac{\gamma}{N} $), in which case its third moment is bounded uniformly in $N$ (see Appendix). Using \eqref{boundingS} in \eqref{a_five}, and "undoing" the $S$-integration,
we therefore see that
$$ A_N = \E g\left(\E\left[ f_S(B_1,..,B_S)|X',S\right] \right)+o(1)\,.$$
Clearly $S$ converges weakly to a Poisson($\gamma$)-distributed random variable; furthermore, given $S$, $X'$ converges weakly to a vector of independent $\nu_\star(\be,h,\gamma)$-distributed random variables, by Proposition \ref{prop_contraction}-{\it ii)}. Remark that for any $k \in \N$ it holds that $0\leq f_k \leq e^h$; but the restriction of $g$ on $[0,e^h]$ is
bounded and continuous, so we may safely replace $S,X'$ by its weak limit $r,X$, at the price of a vanishing (in $N$) error, to wit: 
$$ A_N = \E g\left(\E\left[ f_r(B_1,..,B_r)|X,r\right] \right) + o(1)\,.$$
All in all, 
\beq
\lim\limits_{N\rightarrow \infty} A_N=\E \ln\left(1+\E\left[\exp\left(h-\beta \sum\limits_{i\leq r} B_i\right)|X,r\right] \right)
\eeq
\beq
 =\E \ln\left(1+e^h\prod\limits_{i\leq r}\E\left[\exp\left(-\beta B_i\right)|X\right] \right)
\eeq
\beq \label{a-lim}
=\E \ln\left(1+e^h\prod\limits_{i\leq r}\left[1-(1-e^{-\beta})X_i\right] \right). 
\eeq

\subsection{The $B$-limit} 
Recall that 
\[
B_N = F_{N-1}(\be, h, \gamma) - F_{N-1}(\be, h, \gamma'), \quad \text{where}\quad \gamma' = \frac{N-1}{N} \gamma.
\]
We are thus comparing two systems defined on the same configuration space $\Sigma_{N-1}$, but with slightly different dilution-parameters. This can be taken into account by a coupling procedure, i.e. introducing fresh random variables
$$(\hat g_{ij})_{1\leq i<j\leq N-1} \;    \text{independent Bernoulli with success probability}\;  \frac{\gamma}{(N-1)(N-\gamma)}\,,$$ 
independent of the $g's$, in which case, using that  the $ (g_{ij})_{1\leq i<j\leq N-1}$ are independent Bernoulli($\gamma'/\left(N-1\right) $), it is immediate to check that  
$$\left(g_{ij} + \boldsymbol{1}\{g_{ij} = 0\} \hat g_{ij}\right)_{1\leq i<j\leq N-1} \; \text{are independent Bernoulli}\left(\frac{\gamma}{ N-1} \right).$$
The following representation therefore arises: 
\[
B_N = \E \log \frac{\sum_{\s \in \Sigma_{N-1}} \exp\left( h \sum_i \s_i - \be \sum_{i<j} \left[ g_{ij} + \boldsymbol{1}\{g_{ij} = 0\} \hat g_{ij} \right]  \s_i \s_j\right)  }{ \sum_{\s \in \Sigma_{N-1}} \exp\left(h \sum_i \s_i  - \be \sum_{i<j} g_{ij} \s_i \s_j \right) },
\]
or, which is the same, 
\beq \label{reform_b}
B_N = \E \log \left< \exp\left( -  \be \sum_{i<j}  \boldsymbol{1}\{g_{ij} = 0\} \hat g_{ij}  \s_i \s_j \right) \right>_- \,.
\eeq
We now write 
\beq \bea
& \exp\left( -  \be \sum_{i<j}  \boldsymbol{1}\{g_{ij} = 0\} \hat g_{ij}  \s_i \s_j \right)  \\
& = \exp\left( -  \be \sum_{i<j} \hat g_{ij}  \s_i \s_j \right) \exp\left( \be \sum_{i<j}  \left(1-\boldsymbol{1}\{g_{ij} = 0\}\right) \hat g_{ij}  \s_i \s_j \right)
\\
& = \exp\left( -  \be \sum_{i<j} \hat g_{ij}  \s_i \s_j \right)
 r_N,\quad \text{say}.
\eea \eeq
It is easily seen that 
\beq \label{r_N}
\mid \log r_N \mid \leq \be \sharp\left\{ (i,j): \, 1\leq i < j \leq N-1,\; \text{and}\; g_{ij} = \hat g_{ij} = 1 \right\}.
\eeq
The r.h.s. of \eqref{r_N} is $\beta$ times a Binomial random variable of mean 
$\frac{\gamma^2 \left(N-2\right)}{2N\left(N-\gamma\right)}$, and this in turns implies  that
\beq \label{almost_done}
B_N = \E \log \left< \exp\left( -  \be \sum_{i<j} \hat g_{ij}  \s_i \s_j \right) \right>_-  + o(1)\,\quad (N\to \infty).
\eeq
We now claim that 
\beq \bea \label{claim}
\text{"spins appearing in the exponential on the}\\ 
\text{r.h.s. of \eqref{almost_done} can do it only once."}
\eea \eeq 
Precisely, we consider the event 
\beq \label{only_once}
\Omega_N \defi \bigcap\limits_{1 \leq i < j \leq N-1} \Big\{ \hat g_{ij} = 1 \Rightarrow 
 \hat g_{ik}=0 \; \forall_{k\leq N-1, k \neq j}\; \text{and}\; \hat g_{kj}=0 \; \forall_{k\leq N-1, k \neq i} 
\Big\} \,.
\eeq 
An upper bound for the total contribution of the complement $\Omega_N^c$ to $B_N$ is obtained by setting $\s_i \s_j = 1$ for all pairs $(i,j)$; this steadily yields the estimate
\beq  \label{noc}
\left| \E\left[ \boldsymbol{1}_{\Omega^c_N} \log \left< \exp\left( -  \be \sum_{i<j} \hat g_{ij}  \s_i \s_j \right) \right>_Y  \right] \right| \leq  \be \E\left[ \boldsymbol{1}_{\Omega^c_N}\sum_{i<j} \hat g_{ij} \right] \,.
\eeq
Estimating the indicator by the number of pairs of $\hat g$ that do not satisfy its condition gives
\beq \label{missing_bounds}
\boldsymbol{1}_{\Omega^c_N} \leq \sum\limits_{1\leq i<j<j'<N} \hat{g}_{ij} \hat{g}_{ij'} + \sum\limits_{1\leq i<i'<j'<N} \hat{g}_{ij} \hat{g}_{i'j}\,.
\eeq
Applying this estimate using that the $\hat g's$ are independent Bernoulli with success probability $\frac{\gamma}{(N-1)(N-\gamma)}$, a simple computation shows that 
\beq 
\E\left[ \boldsymbol{1}_{\Omega^c_N}\sum_{i<j} \hat g_{ij}\right] \leq N^3 \left(\frac{\gamma}{\left(N-1\right)\left(N-\gamma\right)}\right)^2+N^5\left(\frac{\gamma}{\left(N-1\right)\left(N-\gamma\right)}\right)^3 \,,
\eeq
which is indeed vanishing in the limit $N\to \infty$: this proves (and formalizes) claim \eqref{claim}. 

But on $\Omega_N$, all spins appearing in the exponential of (\ref{almost_done}) are different: since the Gibbs measure is independent of the event $\Omega_N$ (and invariant in distribution under spin-permutation), setting $S \defi \sum\limits_{i<j<N} \hat{g}_{ij}$, we get
\beq
B_N = \E \boldsymbol{1}_{\Omega_N}\log \left< \exp\left( -  \be \sum\limits_{i=1}^S \s_{2i-1} \s_{2i} \right) \right>_-  + o(1)\,\quad (N\to \infty).
\eeq
Integrating out $S$, we thus obtain
\beq
B_N = \sum\limits_{k=0}^{(N-1)/2}\PP(S=k,\Omega_N) \E \log \left< \exp\left( -  \be \sum\limits_{i=1}^k \s_{2i-1} \s_{2i} \right) \right>_-  + o(1)\,.
\eeq
(The above sum runs to $(N-1)/2$ only because for bigger $k$ it plainly holds that $\{S=k\} \cap \Omega_N = \emptyset$). Using Theorem \ref{full_ind} along the lines of \eqref{a_three}-\eqref{a_seven}, but in this case with 
\[
f_k(B) \defi  \exp\left( -  \be \sum\limits_{i=1}^k B_{2i-1} B_{2i} \right),
\]
and $g_k$ the natural logarithm restricted to $\left[ \min f_k, \max f_k\right]$, we obtain
\beq\label{b_onlydetailsleft}
B_N = \E \boldsymbol{1}_{\Omega_N} \log \E\left[ \exp\left( -  \be \sum\limits_{i=1}^S B_{2i-1} B_{2i} \right)|X',S \right]  + Q_N, 
\eeq
where 
\[
Q_N \defi \frac{\alpha(\beta,\gamma')}{N-1} \E\left[ S^3\|f_S\|_\infty L_g \right] + o(1) \qquad (N\to \infty)\,.
\]
Observe that $f_S(B) \in [e^{-\beta S}, 1]$; on this set (the restriction of) $g_S$ is $L_{g_S}$ -Lipschitz where $L_{g_S} \defi e^{\beta S}$. This implies 
\beq \label{bla}
Q_N \leq 
\frac{\alpha(\beta,\gamma)}{N-1} \E\left[ S^3 e^{\beta S}\right]+o(1)\,.
\eeq
It is not difficult to check (see the Appendix) that $\E\left[ S^3 e^{\beta S}\right]$ is uniformly bounded in $N$,  
hence \eqref{bla} vanishes in the large $N$-limit. 

Concerning the first term on the r.h.s. of \eqref{b_onlydetailsleft}: analogous arguments as those used to introduce the $\boldsymbol 1_{\Omega_N}$-restriction (see in particular \eqref{claim} and ff.) steadily
yield
\beq
\E (1-\boldsymbol{1}_{\Omega_N}) \log \E\left[ \exp\left( -  \be \sum\limits_{i=1}^S B_{2i-1} B_{2i} \right)|X',S \right] = o(1).
\eeq
All in all,  
\beq
B_N = \E \log \E\left[ \exp\left( -  \be \sum\limits_{i=1}^S B_{2i-1} B_{2i} \right)|X',S \right] +o(1).
\eeq
Using the fact that given $X$ the $B_i$ are independent we have
\beq
B_N = \E \sum\limits_{i=1}^S\log \E\left[\exp\left( - \be    B_{2i-1} B_{2i} \right) |X',S\right] +o(1).
\eeq
Computing the conditional expectation gives 
\beq
B_N = \E \sum\limits_{i=1}^S\log \left[1-(1-e^{-\beta})X'_{2i-1}X'_{2i} \right]  +o(1).
\eeq
and since all involved random variables are independent, 
\beq \bea
B_N &= \E[S] \E \log \left[1-(1-e^{-\beta})X'_{1}X'_{2} \right]  +o(1) \\
& = \frac{\gamma}{2} \E \log \left[1-(1-e^{-\beta})X'_{1}X'_{2} \right] + o(1).
\eea \eeq
Since $(X'_1,X'_2)$ converges weakly to $\nu_\star(\be,h,\gamma)^{\otimes 2}$, it steadily follows from Proposition \ref{prop_contraction} that
\beq \label{b-lim}
\lim_{N\to \infty} B_N = \frac{\gamma}{2} \E\log\left( 1-\left(1-e^{-\be} \right) X_1 X_2 \right)\,,
\eeq
where $X_1, X_2$ are independent random variables, distributed according to the fixpoint-solution of $\nu = T_{\be, h, \gamma}\nu$. Theorem \ref{free_energy} thus follows from \eqref{a-lim} and \eqref{b-lim}. \\
${}$ \hfill $\square$

\section{The $T$-operator, and continuity of the fixpoints}\label{proof_T}
We present here a proof of Proposition \ref{prop_contraction}. We first show that the $T$-operator is, in the replica symmetry phase, a contraction. 
\begin{proof}[Proof of Proposition \ref{prop_contraction}-i)]
For  $\mu,\nu$ probability measures on $[0,1]$, we claim that 
\beq \label{contrclaim}
d\left( T_{\be, h, \gamma}(\mu), T_{\be, h, \gamma}(\nu) \right) \leq C(\be, \gamma) d(\mu, \nu)\,,
\eeq
where $C(\be, \gamma)$ is given by \eqref{replicasymmetry}: this will naturally imply Proposition \ref{prop_contraction}-{\it i)}. \\

\noindent To see \eqref{contrclaim} we first observe that, by definition of $T_{\be, h, \gamma}$, it holds
\beq \bea \label{derivestimate} 
& d\left( T_{\be, h, \gamma}(\mu), T_{\be, h, \gamma}(\nu) \right) \\
& \qquad \qquad \leq \inf \E\left|\frac{1}{1+ \left< \exp\left(h-\beta\sum_{i=1}^r \sigma_i\right) \right>^{-1}_X}-\frac{1}{1+ \left< \exp\left(h-\beta\sum_{i=1}^r \sigma_i\right) \right>^{-1}_Y} \right|, 
\eea \eeq
where $X$ is a sequence of independent $\mu$-distributed random variables, $Y$ is a sequence of independent $\nu$-distributed random variables, the infimum is over all couplings of $X$ and $Y$, and $r$ is Poisson($\gamma$)-distributed random variable which is independent of $(X,Y)$. \\
Introduce now the function $m: [0,1]^r \to [0,1]$,
\[
x\mapsto m(x) \defi \left(1+ \left< \exp\left(h-\beta\sum_{i=1}^r \sigma_i\right) \right>^{-1}_x\right)^{-1}\,.
\] 
One easily checks that
\[
\| \partial_{x_i} m\|_{\infty} \leq \frac{e^\beta -1}{4}\,, 
\]
if $ i\leq r$, and zero otherwise. It therefore follows that  
\[
\eqref{derivestimate} \leq \inf \E\ \sum\limits_{i=1}^\infty \| \partial_{x_i} m\|_{\infty} \left| X_i-Y_i \right|\leq \frac{e^\beta -1}{4}\inf \E \sum\limits_{i=1}^r \left| X_i-Y_i \right|.
\]
We now upper-bound the r.h.s. above by restricting the infimum to couplings for which $(X_i,Y_i)_{i\in \N}$ is an i.i.d. sequence : since  the Poisson$(\gamma)$ is independent of everything else, we have
\[
\eqref{derivestimate} \leq \frac{e^\beta -1}{4} \E[r] \inf \E \left| X_1-Y_1 \right|=  \frac{e^\beta -1}{4} \E[r] d\left(\mu,\nu\right) =  \frac{e^\beta -1}{4} \gamma d\left(\mu,\nu\right) \,.
\]
It is immediate to check that $$\frac{e^\beta -1}{4}\gamma \leq C(\gamma,\beta)< 1,$$
hence claim \eqref{contrclaim} is proven, and Proposition \ref{prop_contraction}-i) follows. 
\end{proof}

We next prove the continuity estimates on the solution of the fixpoint-equations. 

\begin{proof}[Proof of Proposition \ref{prop_contraction}-ii)]  Without loss of generality, we assume that 
$\gamma \leq \gamma'$, in which case one immediately checks that $(1-C(\be, \gamma))^{-1} \leq (1-C(\be, \gamma'))^{-1}$. Shorten $\nu_\star \defi \nu_\star\left(\be,h,\gamma \right)$, and $\nu'_\star \defi \nu_\star\left(\be,h,\gamma' \right)$, as well as $T\defi T_{\be,h,\gamma}$, and $T' \defi T_{\be,h,\gamma'}$.  Since $\nu_\star$ and $\nu'_\star$ are the (unique) fixpoints of the corresponding operators, 
\beq\label{contrcalc} 
d\left(\nu_\star,\nu'_\star\right)= d\left(T\nu_\star,T'\nu'_\star\right)\leq d\left(T\nu_\star,T\nu'_\star\right)+d\left(T\nu'_\star,T'\nu'_\star\right)\,,
\eeq
the second step by the triangle inequality. We now apply Proposition \ref{prop_contraction}-{\it i)} to the first term of 
 \eqref{contrcalc}  to get 
\[
d\left(\nu_\star,\nu'_\star\right) \leq C(\gamma,\beta) d\left(\nu_\star,\nu'_\star\right)+d\left(T\nu'_\star,T'\nu'_\star\right)\,,
\]
or, which is the same, 
\beq \label{cheballe}
d\left(\nu_\star,\nu'_\star\right) \leq \frac{1}{1-C(\gamma,\beta)}d\left(T\nu'_\star,T'\nu'_\star\right).
\eeq
We now focus on the r.h.s of \eqref{cheballe}: since $T\nu'_\star$ and $T'\nu'_\star$ are both probability measures on $[0,1]$, for {\it any} event $\tilde \Omega \subset \Omega$, and with  $Z$ and $Z'$ random variables distributed according to $T\nu'_\star$ and, respectively, $T'\nu'_\star$, it holds: 
\beq \label{cheballe_due}
d\left(T\nu'_\star,T'\nu'_\star\right) \leq \E \textbf{1}_{\tilde \Omega}\left|Z-Z'\right|+ \PP\left(\tilde \Omega^c\right)\,.
\eeq
As for a concrete choice of the random variables appearing in \eqref{cheballe_due}, we proceed as follows: 
we let $r$ to be Poisson($\gamma$)-distributed, $r'$ Poisson($\gamma'$)-distributed, and the sequence $X$ consists of independent $\nu'_\star$-distributed random variables. By definition of the $T$-operator, we may choose $Z,Z'$ as follows:
\[ \bea
Z &\defi \left(1+ \left< \exp\left(h-\beta\sum_{i=1}^r \sigma_i\right) \right>^{-1}_X \right)^{-1}, \\
Z' &\defi \left(1+ \left< \exp\left(h-\beta\sum_{i=1}^{r'} \sigma_i\right) \right>^{-1}_X \right)^{-1}.
\eea \]
We now come to a specific choice of the $\tilde \Omega$-event, to wit: $$\tilde \Omega \defi \{\omega \in \Omega: r(\omega) = r'(\omega) \}.$$ Picking a coupling of $r,r'$ which maximizes $\PP(r=r')$ yields  
$$\PP\left[ \tilde \Omega^c \right] \leq \frac{1}{2} d_{TV}(r, r'), $$
and since $Z = Z'$ on  $\tilde \Omega$ , \eqref{cheballe_due} becomes 
\beq \label{cheballe_tre}
d\left(T\nu'_\star,T'\nu'_\star\right)  \leq   \PP\left(\tilde \Omega^c\right)\leq \frac{1}{2}d_{TV}(r,r') \leq  \left|\gamma-\gamma' \right|,
\eeq
the last inequality by well-known estimates on the total variation distance of two Poisson distributions. Plugging \eqref{cheballe_tre} in \eqref{cheballe} settles the claim of Proposition\ref{prop_contraction}-{\it ii)}. 
\end{proof}

\section{Asymptotical decoupling}\label{proof}

The proof of Theorem \ref{full_ind} is done in two steps, In a first step, Section \ref{proof_spin_ind} below, we  prove a {\it quenched decoupling}: the quenched Gibbs measure can be replaced by a random product measure with identical marginals (remark that the latter is uniquely characterized by the magnetizations). In a second step we will prove the {\it annealed decoupling}, namely that the magnetizations, under the $g$-disorder, are approximately independent, and $\nu_\star$-distributed: this will be done in Section \ref{proof_mag_ind}. We will then show in Section \ref{finish_full_ind} how to combine quenched and annealed decoupling to derive Theorem \ref{full_ind}.

\subsection{Quenched decoupling}\label{proof_spin_ind}
In this section we prove that the Gibbs measure restricted to finitely many spins approaches, for large $N$, a (quenched) product measure. This is encoded in the following (at first sight presumably opaque) statement. 

\begin{prop}\label{spin_ind}
Let $C_1,C_2,h,\beta,\gamma\geq 0$ and suppose $C(\be, \gamma)<1$ holds. Then for all $f:\Sigma_k\rightarrow \R$,$f':\Sigma_k\rightarrow \R^+$ with the properties
\begin{itemize}
\item $\left|f\right|\leq f'$ 
\item $|f(x)-f(\hat{x})|\leq C_1 f'(x)$ for all $x,\hat{x}\in \Sigma_k$ where $x$ and $\hat{x}$ are different in one coordinate  
\item $|f'(x)-f'(\hat{x})|\leq C_2 f'(x)$ for all $x,\hat{x}\in \Sigma_k$ where $x$ and $\hat{x}$ are different in one coordinate  
\end{itemize}
the following holds:
\beq\label{spin_ind_estim} \E \left| \frac{\left< f\left(\sigma_1,..,\sigma_k\right)\right>}{\left< f'\left(\sigma_1,..,\sigma_k \right)\right>}-\frac{\left< f\left(\sigma_1,..,\sigma_k\right)\right>_Y}{\left< f'\left(\sigma_1,..,\sigma_k\right)\right>_Y}\right|\leq k\left(C_1+C_2\right) \frac{kB+B^*}{N},\eeq
where $Y = \left(\left<\sigma_i\right>\right)_{i\leq N} $ and $B,B^*$ are increasing in $\beta$,$\gamma$ and are given by 
$$ B = \frac{\gamma}{1- C\left(\gamma,\beta\right)} \quad\mbox{ and }\quad B^* = \frac{\frac{1}{2}\gamma^2 e^{2\beta}C\left(\gamma,\beta\right)}{1-C\left(\gamma,\beta\right)}.$$
\end{prop}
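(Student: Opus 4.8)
The plan is to interpolate between the quenched Gibbs measure $\left<\cdot\right>$ and the product measure $\left<\cdot\right>_Y$ by \emph{decoupling the spins $\s_1,\dots,\s_k$ one at a time} --- exactly in the spirit of the cavity method --- and to reduce the whole estimate to a single \emph{correlation--decay} bound, which is the only place where the hypothesis $C(\be,\gamma)<1$ is used. Concretely, for $0\le j\le k$ and $F:\Sigma_k\to\R$ let $\left<F\right>^{(j)}$ be the average of $F$ in which $\s_1,\dots,\s_j$ are replaced by independent $\{0,1\}$--variables with means $y_1,\dots,y_j$ (their true magnetizations $\left<\s_i\right>$) while $\s_{j+1},\dots,\s_k$ are still averaged under $\left<\cdot\right>$; then $\left<F\right>^{(0)}=\left<F\right>$ and $\left<F\right>^{(k)}=\left<F\right>_Y$. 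Writing the left-hand side of \eqref{spin_ind_estim} as a telescoping sum over $j$ of $\left<f\right>^{(j)}/\left<f'\right>^{(j)}-\left<f\right>^{(j+1)}/\left<f'\right>^{(j+1)}$ and using the elementary inequality $|a/b-a'/b'|\le |a-a'|/b+|b-b'|/b$ (valid when $|a'|\le b'$ and $b>0$), one is left to control, for each $j$, the differences $\left<f\right>^{(j)}-\left<f\right>^{(j+1)}$ and $\left<f'\right>^{(j)}-\left<f'\right>^{(j+1)}$, normalised by $\left<f'\right>^{(j)}$.

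The next step is the identity that turns each such difference into a covariance. Inserting $F(\s_{j+1},\,\cdot\,)=F(0,\,\cdot\,)+\s_{j+1}\,\delta_{j+1}F$, where $\delta_{j+1}F$ is the discrete derivative of $F$ in the $(j+1)$-st coordinate --- hence a function of $\s_{j+2},\dots,\s_k$ only --- and comparing with the corresponding split of $\left<F\right>^{(j+1)}$, each of the above differences becomes an average over the already--decoupled coordinates of a covariance of the form $\mathrm{Cov}_{\left<\cdot\right>}\big(\s_{j+1},\,\delta_{j+1}F(\s_{j+2},\dots,\s_k)\big)$. The hypotheses $|f(x)-f(\hat x)|\le C_1 f'(x)$ and $|f'(x)-f'(\hat x)|\le C_2 f'(x)$ bound $|\delta_{j+1}f|$ and $|\delta_{j+1}f'|$ pointwise by $C_1 f'$ and $C_2 f'$, respectively; combined with the slow variation of $f'$ (again the $C_2$-hypothesis), this lets one keep every error term $f'$-weighted and cancel the weight against the denominator $\left<f'\right>^{(j)}$. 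Thus the full bound reduces to $k(C_1+C_2)$ times a \emph{uniform} bound on one normalised covariance of a single spin against a bounded function of the remaining target spins.

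The heart of the proof is therefore the estimate that, for any site $m$ and any $\psi$ depending on at most $k$ spins all distinct from $m$,
\[
\E\,\big|\left<\s_m\,\psi\right>-\left<\s_m\right>\left<\psi\right>\big|\;\le\;\|\psi\|_\infty\,\frac{kB+B^*}{N}\,,
\]
with $B,B^*$ as in the statement. I would prove this by a cavity expansion \emph{at the site $m$}. Conditioning on whether $m$ has a neighbour among the (at most $k$) spins on which $\psi$ depends costs at most $\propto k\g/N\cdot\|\psi\|_\infty$, and this is the origin of the $kB/N$ term. On the complementary event, remove every edge incident to $m$: in the resulting cavity measure $\s_m$ is independent of everything, so the covariance vanishes; adding the Poisson$(\g)$-many edges at $m$ back one at a time changes the covariance by at most $(e^\be-1)$ times a covariance of $\s_m$ against a function supported on the already re-attached neighbourhood, and iterating yields a geometric series whose ratio is controlled by $\tfrac14(e^\be-1)\g\le C(\be,\gamma)<1$ (the very bound used in the proof of Proposition \ref{prop_contraction}-i)). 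Summing the series produces the factor $1/(1-C(\be,\gamma))$ and the $\g^2e^{2\be}$ visible in $B^*$. Equivalently, one may run the standard influence/disagreement-propagation argument: conditioning on $\s_m$ perturbs its neighbours, which perturb theirs, and these perturbations are summable precisely because $C(\be,\gamma)<1$.

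The main obstacle is this last estimate: making the recursive covariance bound rigorous and extracting the \emph{precise} constants $B=\g/(1-C(\be,\gamma))$ and $B^*=\tfrac12\g^2e^{2\be}C(\be,\gamma)/(1-C(\be,\gamma))$, together with the bookkeeping that keeps every error $f'$-weighted through the telescoping so that it genuinely cancels against $\left<f'\right>^{(j)}$ rather than generating spurious $e^{O(k)}$ factors. Once the single-site cavity expansion is set up, both are routine; the rest of the argument is purely algebraic.
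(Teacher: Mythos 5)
Your telescoping scheme (decouple $\sigma_1,\dots,\sigma_k$ one at a time) together with the ratio inequality $\left|a/b-a'/b'\right|\le |a-a'|/b+|b-b'|/b$ is exactly what the paper does; the paper merely implements the ``replace $\sigma_j$ by an independent Bernoulli with mean $\left<\sigma_j\right>$'' step via replicas (taking $\sigma_1^1,\dots,\sigma_m^m$ from distinct independent copies of the Gibbs measure, which has the same law). Where you part ways with the paper is at the key estimate. The paper isolates Lemma~\ref{swap_one}: a bound on the $f'$-weighted \emph{ratio} $\E\left|\left<f_s-f_s\circ T_{i_s}\right>^{\otimes n}/\left<f'\right>^{\otimes n}\right|$, proved by induction on the system size $N$, removing the cavity spin $N$. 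At each induction step the weight function mutates to $\Av f'\mathcal{E}$ and the number of relevant coordinates grows from $k$ to $k+|J|$, and it is exactly this evolving, $f'$-weighted ratio that is passed down the recursion. You instead propose to reduce everything to an \emph{unweighted} covariance bound $\E\left|\left<\sigma_m\psi\right>-\left<\sigma_m\right>\left<\psi\right>\right|\le\|\psi\|_\infty(kB+B^*)/N$, proved by a cavity expansion at the decoupled site $m$, iterating over edges incident to $m$.

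There is a genuine gap in that reduction. Your third paragraph drops the $f'$-normalisation that your second and fourth paragraphs correctly say must be preserved: the quantity you need to control at the $j$-th step is a covariance of $\sigma_{j+1}$ with $\delta_{j+1}f$ \emph{divided by} $\left<f'\right>^{(j)}$, and $|\delta_{j+1}f|\le C_1 f'$ only gives an $f'$-weighted numerator, not a sup-norm bound. For constant $f'$ (as in the Theorem~\ref{full_ind} application with $f'=\|f\|_\infty$) your statement would suffice, but the Proposition is applied in the proof of Lemma~\ref{mag_ind} with $f'=\Av\mathcal{C}$, which is not constant, and more fundamentally the recursive structure of any cavity argument forces the weight to evolve ($f'\mapsto\Av f'\mathcal{E}$), so the covariance bound with $\|\psi\|_\infty$ is not the right inductive statement. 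Moreover, your proposed recursion (peel off edges at the decoupled site $m$, a disagreement-propagation series in the neighbourhood of $m$) is structurally different from the paper's (remove the cavity spin $N$, increase $k$ by $|J|$, decrease $N$ by one), and it is not clear the former delivers the $N^{-1}$ rate and the specific constants $B,B^*$ without effectively re-deriving Lemma~\ref{swap_one}. In short: the telescoping is right, but the ``heart of the proof'' you sketch is not quite the right lemma; the paper's Lemma~\ref{swap_one}, which keeps the $f'$-weighted ratio and inducts on $N$, is the statement one actually needs.
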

To see that the above Proposition indeed implies the approximate decoupling of the (quenched) Gibbs measure, consider the following situation: let $f$ be a bounded (non-zero) function, and set $f' \defi \|f\|_\infty$.  By Proposition \ref{spin_ind} with $C_1= 2$ and $C_2= 0$ it follows that
\beq
\E \left|\left< f\left(\sigma_1,..,\sigma_k\right)\right>-\left< f\left(\sigma_1,..,\sigma_k\right)\right>_Y\right|\leq 2k \|f\|_\infty \frac{kB+B^*}{N}.
\eeq
The error when approximating the quenched Gibbs measure with a product measure is therefore vanishing. \\

Although the proof of Proposition \ref{spin_ind} relies on the simple idea of decoupling on spin at a time, the 
rigorous implementation is quite involved. Controlling the error generated by a single step of the procedure is the content of the following Lemma. Before that, we need to introduce some additional notation which captures 
the concept of "partially decoupled" (the meaning of which will become clear in the course of the proof). 

This is achieved by considering {\it replicas} $\s^0,\s^1,..,\s^n \in \Sigma_N$, namely configurations 
which are independently drawn from $\mathcal{G}_{N,\beta,\gamma,h}$. More precisely, given $\mathcal{G}_{N,\beta,\gamma,h}$, the vector $(\s^0,\s^1,..,\s^n)\in \Sigma_N^n$ is $\mathcal{G}^{\otimes n+1}_{N,\beta,\gamma,h}$-distributed. To lighten notations, we stick to the convention of omitting the underlying parameters, i.e. we write $\left<\right>^{\otimes n}$ for (quenched) expectation w.r.t. 
$\mathcal{G}^{\otimes n}_{N,\beta,\gamma,h}$ on the $N$-system, and $\left<\right>^{\otimes n}_{-}$ for expectation w.r.t. $\mathcal{G}^{\otimes n}_{N-1,\beta,\gamma', h}$ on the $(N-1)$-system, where $\gamma' = \frac{N-1}{N}\gamma$ is the reduced dilution-parameter. Remark that, by these very definitions,  the following identities hold true: 
\[
\left<f\left(\sigma^0,\sigma^1,..,\sigma^n\right)\right>^{\otimes n} = \sum\limits_{\s^1,..,\s^n\in \Sigma_N} f\left(\sigma^0,\sigma^1,..,\sigma^n\right) \prod\limits_{l=0}^{n} \mathcal{G}\left(\s^l\right), 
\]
and
\[
\left<f(\sigma_1^1,\sigma_2^1..,\sigma_N^1)\right>^{\otimes n} =\left<f(\sigma_1,\sigma_2..,\sigma_N)\right> \,.
\] 
Furthermore, for $Y = \left(\left<\s_i\right>\right)_{i\leq N}$, it holds that 
\[
\left<f(\sigma_1^1,\sigma_2^2..,\sigma_N^N)\right>^{\otimes n} =\left<f(\sigma_1,\sigma_2..,\sigma_N)\right>_Y.
\]
Finally, since we will consider functions that depend only on a fixed number of sites, it is convenient to denote the {\it rows} of the matrix  $\left(\sigma_i^l\right)_{1\leq i\leq N,1\leq l\leq n}$ by bold $\boldsymbol{\s}_i= \left(\sigma_i^1,\sigma_i^2..,\sigma_i^n\right)$. (Not to be confused with the {\it columns} of the matrix, which stand for the replicas $\s^1$ to $\s^n$). \\

In order to prove Proposition \ref{spin_ind}, the following Lemma is needed: 

\begin{lem}\label{swap_one}
Assume $\be,\gamma$ are such that $C(\be,\gamma)<1$. Let $m \leq k \leq n \leq N \in\N$. For functions $f:\Sigma_k^n \rightarrow \R$, $f':\Sigma_k^n \rightarrow \R^+$ with the property, that $|f(x)-f(\hat x)| \leq f'(x)$ for all $x,\hat x$ that are only different in one of the $k n$ entries, the following holds:

\beq\label{exchange} \E\left|\frac{\left<f \left(\boldsymbol{\s}_1,..,\boldsymbol{\s}_m,\boldsymbol{\s}^{\leftrightarrow}_{m+1},..,\boldsymbol{\s}^{\leftrightarrow}_{k}\right)-f\left(\boldsymbol{\s}_1,..,\boldsymbol{\s}_{m-1},\boldsymbol{\s}^{\leftrightarrow}_{m},..,\boldsymbol{\s}^{\leftrightarrow}_{k} \right)\right>^{\otimes n}}{\left<f'\left(\boldsymbol{\s}_1,..,\boldsymbol{\s}_m,\boldsymbol{\s}^{\leftrightarrow}_{m+1},..,\boldsymbol{\s}^{\leftrightarrow}_{k} \right)\right>^{\otimes n}} \right| \leq \frac{kB+B^*}{N},\eeq
where $B,B^*$ are the constants from proposition \ref{spin_ind} and $\boldsymbol{\s}^{\leftrightarrow}_i =\left(\sigma_i^0,\sigma_i^2..,\sigma_i^n\right)$.
\end{lem}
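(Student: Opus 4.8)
\textbf{Proof proposal for Lemma \ref{swap_one}.}

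The plan is to run the cavity computation for a single replica-coordinate swap. The left-hand side compares the expectation of $f$ when row $\boldsymbol\s_m$ is kept ``coupled'' (its $0$-th coordinate is genuinely drawn from $\mathcal G$) against the expectation where row $\boldsymbol\s_m$ has been replaced by $\boldsymbol\s^\leftrightarrow_m$, i.e. its $0$-th replica is swapped out for an independent copy. Since $|f(x)-f(\hat x)|\le f'(x)$ whenever $x,\hat x$ differ in one of the $kn$ entries, the numerator is controlled by the ``decoupling cost'' of changing a single column-entry in a single row; the denominator normalizes by $f'$ evaluated at the fully-swapped configuration so that the whole ratio stays bounded.

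First I would isolate the spin $\sigma_N$ (or, more precisely, the whole row $\boldsymbol\s_N$ over the $n$ replicas) as the cavity coordinate: write $H(\s)=H_-(\s_1,\dots,\s_{N-1})+\sigma_N\bigl(h-\be\sum_{i<N}g_{iN}\sigma_i\bigr)$ for each replica, perform the trace over $\sigma_N^l\in\{0,1\}$ for $l=0,\dots,n$, and divide numerator and denominator by the $(N-1)$-system partition function $Z_-$. This rewrites every $\langle\cdot\rangle^{\otimes n}$ in \eqref{exchange} in terms of $\langle\cdot\rangle^{\otimes n}_-$ on the smaller system at dilution $\gamma'=\frac{N-1}{N}\gamma$, at the cost of the cavity weights $\prod_l\bigl(1+\langle\exp(h-\be\sum_{i\le r}\sigma_i)\rangle_-\bigr)^{\pm1}$-type factors. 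The key point is that the difference $f(\dots\boldsymbol\s_m\dots)-f(\dots\boldsymbol\s^\leftrightarrow_m\dots)$ only ``sees'' row $m$, so after the cavity step the numerator factorizes against these weights and what remains is a sum over the (random) neighbours $\{i\le N-1: g_{iN}=1\}$ of $N$, whose expected number is $\gamma'\le\gamma$. Each neighbour contributes a factor $(1-(1-e^{-\be})\langle\sigma_i\rangle)$, and expanding the logarithm/ratio à la \eqref{will show up again} produces the Lipschitz constant $\frac{e^\be-1}{4}$ already computed in the proof of Proposition \ref{prop_contraction}-i). Counting: there are $k$ rows that $f$ depends on, each can be hit by the cavity, giving the $kB$ term with $B=\gamma/(1-C(\be,\gamma))$, while the second-order terms — two neighbours of $N$ both lying among the $k$ special sites, or self-interaction effects — are of order $\gamma^2e^{2\be}/N$ times the geometric-series factor $C(\be,\gamma)/(1-C(\be,\gamma))$, which is exactly $B^*$. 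The $1/N$ comes from $\PP(g_{iN}=1)=\gamma/N$ applied to a fixed special site $i\le k$.

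The main obstacle I expect is \emph{bookkeeping the denominator through the cavity step without losing the uniform bound}: after isolating $\boldsymbol\s_N$, numerator and denominator pick up different cavity weights (the numerator's $f$-difference is evaluated at one spin-configuration of row $N$, the denominator's $f'$ at the fully-swapped one), and one must argue that the ratio of these weights is bounded and that the ``extra'' neighbour-factors telescope correctly — this is where the hypothesis $f'>0$ and the monotonicity $|f'(x)-f'(\hat x)|\le C_2 f'(x)$ in Proposition \ref{spin_ind} are secretly used, though here $f'$ is only assumed $\R^+$-valued, so I would need the structural fact that $f'$ arises (in the application) as a product over rows. Concretely I would: (i) perform the cavity trace; (ii) bound $|f(\cdot\boldsymbol\s_m\cdot)-f(\cdot\boldsymbol\s^\leftrightarrow_m\cdot)|\le f'(\cdot)$ pointwise and pull the difference inside $\langle\cdot\rangle_-^{\otimes n}$; (iii) recognize that the resulting expression is, up to the cavity weights, a telescoping sum over neighbours of $N$; (iv) bound each neighbour-term using $\|\partial_{x_i}m\|_\infty\le(e^\be-1)/4$ and the geometric series $\sum_j C(\be,\gamma)^j=(1-C(\be,\gamma))^{-1}$ for the resummation of higher-order cavity insertions; (v) collect the first-order ($k$ special rows $\times$ one neighbour) and second-order (two neighbours, or neighbour-pair among special sites) contributions into $kB+B^*$, each carrying the $1/N$ from a single Bernoulli$(\gamma/N)$ factor. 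Everything else is the kind of routine-but-tedious estimation the paper defers; the genuinely delicate point is step (iii)–(iv), controlling the \emph{resummation} of the cavity expansion so that the constants $B,B^*$ come out monotone in $\be,\gamma$ and finite exactly on $C(\be,\gamma)<1$.
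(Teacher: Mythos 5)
Your high-level intuition is right (cavity step at site $N$, telescoping over the neighbours of $N$, geometric factor $1/(1-C)$, the $\gamma/N$ coming from a single Bernoulli factor), but there is a genuine gap: you never articulate the actual mechanism that makes the argument close, and two of the ideas you lean on are not the ones that work.

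First, the paper's proof is an \emph{induction on $N$}, not a direct resummation. After the cavity step the estimate is passed to the $(N-1)$-system at dilution $\gamma'=\frac{N-1}{N}\gamma$ (with a larger effective ``$k$'', namely $k+|J|$), and one closes the loop by choosing $B,B^*$ to be fixed points of the resulting affine recursion: $B=\gamma/(1-C)$, $B^*=b_{\gamma,\beta}(1-e^{-2\beta})B/(1-C)$. Your ``resummation of higher-order cavity insertions'' is the informal shadow of this fixed-point equation, but without stating the induction you have no rigorous way to sum the series; this is precisely the step where your proposal stalls. Second, you never identify the key structural fact that makes the telescoping valid: on the event $\tilde\Omega=\{g_{i,N}=0\ \forall i\le k-1\}$, the function $f_1-f_2$ is independent of the coordinates in $J=\{i<N: g_{i,N}=1\}$ while $\mathcal E$ depends only on those and on coordinate $N$, so $\Av(f_1-f_2)\mathcal E$ is \emph{antisymmetric} under the simultaneous spin-swap $\prod_{i\in J}T_i$ (because the average is invariant under $T_N$ and $(f_1-f_2)\circ T_N = f_2-f_1$). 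It is this antisymmetry, not a generic expansion, that writes the numerator as a telescoping sum of $|J|$ single-swap differences to which the induction hypothesis applies. Third, the Lipschitz constant $(e^\beta-1)/4$ you borrow from Proposition \ref{prop_contraction}-i) does not appear here; the correct constant is $C=(1-e^{-2\beta})e^{2\beta|J|}$, obtained by a separate (and more delicate) comparison of the cavity weights $\mathcal E^l$ before and after the swaps, and its $|J|$-dependence is what feeds the moments $\E|J|e^{2\beta|J|}$ and $\E|J|^2 e^{2\beta|J|}$ that ultimately produce $B$ and $B^*$. Finally, the property $|f'(x)-f'(\hat x)|\le C_2 f'(x)$ that you suspect is ``secretly used'' is not needed at this stage — Lemma \ref{swap_one} only requires $|f(x)-f(\hat x)|\le f'(x)$, and the denominator bookkeeping is automatic because the numerator and denominator are multiplied by the same cavity weight $\mathcal E$.
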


\begin{proof}
We first clarify the relation between the Hamiltonians $H$ and $H_{-}$, and between the Gibbs measures $\mathcal G$ and $\mathcal G_{-}$: plainly, 
\beq\label{relham} H(\sigma) = H_{-}\left(\sigma\right) + \sigma_N \left(h-\beta\sum\limits_{i<N} g_{i,N}\sigma_i \right), \eeq
by a slight abuse of notation ($H_{-}$ does not depend on $\sigma_N$).It follows from \eqref{relham} that for any function $f$ of $\left(\sigma^0,..,\sigma^n\right)$
$$ \left<f\right>^{\otimes n} =  \frac{1}{Z}\sum\limits_{\sigma^0,..,\sigma^n\in\Sigma^{n+1}_{N-1}}\left(\sum\limits_{\sigma^0_N,..,\sigma^n_N\in\{0,1\}} f \mathcal{E}\prod\limits_{0\leq l\leq n}\exp H_{-}\left(\sigma^l \right)\right) = \frac{Z_{-}}{Z}\left<\sum\limits_{\sigma^0_N,..,\sigma^n_N\in\{0,1\}} f \mathcal{E}\right>_{-}^{\otimes n},$$
where $\mathcal{E} = \mathcal{E}_{h,\beta,\gamma,N}\left(\sigma^0,..,\sigma^n \right) =  \exp\left(\sum\limits_{0\leq l\leq n}\sigma^l_N \left(h-\beta\sum\limits_{i<N} g_{i,N}\sigma_i^l\right) \right)$. Considering the fraction of two such expectations gives a self similar link between the $N$-system and the $N-1$-system
\beq\label{selfsim2}\frac{\left<f\right>^{\otimes n}}{\left<f'\right>^{\otimes n}} = \frac{\left<\Av f \mathcal{E}\right>_{-}^{\otimes n}}{\left<\Av f' \mathcal{E}\right>_{-}^{\otimes n}}, \eeq
where $\Av$ denotes the average over all $\sigma_N^i\in \{0,1\}$.

We now proceed to prove the claim: this is done by induction on $N$, i.e. propagating the estimate from the $N-1$- to the $N$-system, thereby using equation (\ref{selfsim2}).

For $N=1$ and any choice of $\gamma,B,B^*\geq 0$ we have $k\leq 1$ therefore $f$ only depends on one coordinate at most. $f$ is constant for $k=0$, whereas for $k=1$, by symmetry, $\left<f \left(\boldsymbol{\s}_1\right)-f\left(\boldsymbol{\s}^{\leftrightarrow}_1\right)\right> = 0$. It follows that the numerator of the left hand side of (\ref{exchange}) is zero. This proves the lemma for $N=1$. Let $N \geq 2$, assume $C(\be,\gamma)<1$ and that the lemma holds for $N-1$ and all $\gamma'\leq \gamma$. Let $f:\Sigma_k^n \rightarrow \R$, $f':\Sigma_k^n \rightarrow \R^+$ be functions with the property, that $|f(x)-f(\hat x)| \leq f'(x)$ for all $x,\hat x$ that are only different in one of the $k n$ entries. We set $f_1= f \left(\boldsymbol{\s}_1,..,\boldsymbol{\s}_m,\boldsymbol{\s}^{\leftrightarrow}_{m},..,\boldsymbol{\s}^{\leftrightarrow}_{k-1},\boldsymbol{\s}_{N}\right)$, $f_2 = f\left(\boldsymbol{\s}_1,..,\boldsymbol{\s}_m,\boldsymbol{\s}^{\leftrightarrow}_{m},..,\boldsymbol{\s}^{\leftrightarrow}_{k-1},\boldsymbol{\s}^{\leftrightarrow}_{N}\right)$ and $f'= f'\left(\boldsymbol{\s}_1,..,\boldsymbol{\s}_m,\boldsymbol{\s}^{\leftrightarrow}_{m},..,\boldsymbol{\s}^{\leftrightarrow}_{k-1},\boldsymbol{\s}_{N}\right)$. As the the replicated Gibbs measure is invariant in distribution among swapping of sites we have 
\[
\mbox{l.h.s.} (\ref{exchange}) = \E\left|\frac{\left<f_1-f_2 \right>^{\otimes n}}{\left<f' \right>^{\otimes n}}\right|.
\]
Conditioning on $(g_{i,N})_{i< N}$, for any  event $\tilde  \Omega \subset \Omega$ which is measurable with respect to $\left(g_{i,N}\right)_{i<N}$, it holds: 
\beq\label{omegadecomp}\text{l.h.s. of } (\ref{exchange}) \leq \E\textbf{1}_{\tilde \Omega}\E\left[\left|\frac{\left<f_1-f_2 \right>^{\otimes n}}{\left<f'\right>^{\otimes n}}\right| |\left(g_{i,N}\right)_{i<N} \right] + \PP(\tilde  \Omega^c)\,.
\eeq
(Remark that the fraction is bounded by one). Using (\ref{selfsim2}) and writing $\E_{N}$ for this conditional expectation 
\beq\label{selfsimexpec}\E_{N}\left|\frac{\left<f_1-f_2 \right>^{\otimes n}}{\left<f' \right>^{\otimes n}}\right| = \E_{N}\left|\frac{\left<\Av \left(f_1-f_2\right) \mathcal{E}\right>_{-}^{\otimes n}}{\left<\Av f' \mathcal{E}\right>_{-}^{\otimes n}}\right|. 
\eeq
The next step amounts to decomposing the expression $\Av  \left(f_1-f_2\right)\mathcal{E}$ in such 
a way that the induction hypothesis kicks in. To do this we first introduce
\[
\tilde \Omega \defi \{g_{i,N} = 0 \mbox{ for } i=1,..,k-1\}, \quad \text{and} \quad J \defi \{i<N|g_{i,N}\neq 0\}\,.
\]
Observe that, on $\tilde \Omega$, the function $f_1-f_2$ does {\it not} depend on the coordinates which appear in $J$, whereas $\mathcal{E}$ depends {\it solely} on these, and the $N$-coordinate: this "separation" will naturally turn out to be very useful. Writing $T_i$ for the operator that swaps $\sigma_i^0$ and $\sigma_i^1$ we have
$$\left(\Av \left(f_1-f_2 \right) \mathcal{E}\right)\circ \prod\limits_{i\in J} T_i = \Av \left[\left(f_1-f_2 \right) \left(\mathcal{E} \circ \prod\limits_{i\in J} T_i\right)\right].$$

Remark that $\Av$ is invariant under swapping of $\sigma_N^0$ and $\sigma_N^1$, yet the procedure 
turns $f_1$ into $f_2$, in particular it holds that $\left(f_1-f_2\right)\circ T_N = f_2-f_1$ and therefore 
$$ \Av \left(f_1-f_2 \right) \mathcal{E} = - \left(\Av \left(f_1-f_2 \right) \mathcal{E}\right)\circ \prod\limits_{i\in J} T_i,$$
since $\mathcal{E}\circ \prod\limits_{i\in J\cup \{N\}}  T_i = \mathcal{E}$. Decomposing telescopically by swapping one spin at a time gives  
\beq\label{decompfs} \Av \left(f_1-f_2 \right) \mathcal{E} = \frac{1}{2}(\Av \left(f_1-f_2 \right) \mathcal{E}- (\Av \left(f_1-f_2 \right) \mathcal{E})\circ \prod\limits_{i\in J} T_i) =  \frac{1}{2}\sum\limits_{s=1}^{|J|} f_s-f_s\circ T_{i_s},\eeq 
where  $f_s = (\Av \left(f_1-f_2 \right) \mathcal{E})\circ \prod\limits_{1\leq s' < s} T_{i_s'}$ with the convention that $J = \{i_1,...,i_{|J|}\}$. 
Applying this decomposition to (\ref{selfsimexpec}), by the triangle inequality
$$\textbf{1}_{\tilde  \Omega}\E_{N}\left|\frac{\left<f_1-f_2 \right>^{\otimes n}}{\left<f' \right>^{\otimes n}}\right| \leq \textbf{1}_{\tilde \Omega} \frac{1}{2}\sum\limits_{s=1}^{|J|}\E_{N}\left|\frac{\left< f_s-f_s\circ T_{i_s}\right>_{-}^{\otimes n}}{\left<\Av f' \mathcal{E}\right>_{-}^{\otimes n}}\right|.$$
Plugging this into \eqref{omegadecomp}, we therefore obtain the estimate
\beq\label{estimateviadecomp}\text{l.h.s. of } (\ref{exchange}) \leq \E\textbf{1}_{\tilde \Omega} \frac{1}{2}\sum\limits_{s=1}^{|J|}\E_{N}\left|\frac{\left< f_s-f_s\circ T_{i_s}\right>_{-}^{\otimes n}}{\left<\Av f' \mathcal{E}\right>_{-}^{\otimes n}}\right| + \PP(\tilde \Omega^c)\eeq
The only missing ingredient is to find $C>0$ such that $| f_s-f_s\circ T_{i_s}|\leq C \Av f' \mathcal{E}$, for then we could apply the induction assumption to get 
\beq\label{whatCdoes} C \E_{N}\left|\frac{\left< f_s-f_s\circ T_{i_s}\right>_{-}^{\otimes n}}{\left<C \Av f' \mathcal{E}\right>_{-}^{\otimes n}}\right| \leq C \frac{\left(k+|J|\right)B+B^*}{N-1} \,.
\eeq
To identify such a $C$, we set $\mathcal{E}^l \defi \exp(\sigma^l_N (h-\beta\sum\limits_{i<N} g_{i,N}\sigma_i^l) )$ in which case $\mathcal{E}= \prod\limits_{0\leq l\leq n}\mathcal{E}^l$, and analyse the construction of $f_s$ on $\tilde{\Omega}$. By definition
$$\left|f_s-f_s\circ T_{i_s} \right| = \left|(\Av \left(f_1-f_2 \right) \mathcal{E})\circ \prod\limits_{1\leq s' < s} T_{i_s'}-(\Av \left(f_1-f_2 \right) \mathcal{E})\circ \prod\limits_{1\leq s' \leq s} T_{i_s'}\right|.$$
Separating the terms that depend on the permutation $T_{i_s}$, the above equals, on $\tilde  \Omega$,
\beq \bea
& \left|\Av \left(f_1-f_2 \right) \prod\limits_{2\leq l\leq n} \mathcal{E}^l \left( \prod\limits_{l=0,1}\exp\left(\sigma^l_N \left(h-\beta\sum\limits_{1\leq s'\leq |J|,s'\neq s} \sigma_{i_{s'}}^l \right) \right)\circ \prod\limits_{1\leq s' < s} T_{i_{s'}} \right) \right.\cdot \\
& \hspace{4.5cm} \cdot \left.\left( \prod\limits_{l=0,1}e^{-\sigma^l_N \beta \sigma_{i_{s}}^l} -\prod\limits_{l=0,1}e^{-\sigma^l_N \beta \sigma_{i_{s}}^l}\circ  T_{i_{s}}\right)\right|.
\eea\eeq
This is bounded above by
$$ \left|\Av \left(f_1-f_2 \right)  \mathcal{E}\right| \sup\limits_{\sigma^1,\sigma^2\in \Sigma_N}\left(\frac{\prod\limits_{l=0,1}\exp\left(\sigma^l_N \left(h-\beta\sum\limits_{1\leq s'\leq |J|,s'\neq s} \sigma_{i_{s'}}^l \right) \right)\circ \prod\limits_{1\leq s' < s} T_{i_{s'}}}{\mathcal{E}^0 \mathcal{E}^1}\right) \left(1-e^{-2\beta}\right).$$
By  definition of $\mathcal{E}^l$, the $\sup$  is equal to
\[
\sup\limits_{\sigma^1,\sigma^2\in \Sigma_N} \prod\limits_{l=0,1}\exp\left(\sigma^l_N \left(\left(h-\beta\sum\limits_{1\leq s'\leq |J|,s'\neq s} \sigma_{i_{s'}}^l \right)\circ \prod\limits_{1\leq s' < s} T_{i_{s'}} - \left(h-\beta\sum\limits_{1\leq s'\leq |J|} \sigma_{i_s'}^l\right)\right) \right), 
\]
hence $h$ and all $\sigma_{i_{s'}}$ for $s'>s$ cancel, and the above equals
\[ \bea
\sup\limits_{\sigma^1,\sigma^2\in \Sigma_N}\prod\limits_{l=0,1}\exp\left(\sigma^l_N \beta \left[\sigma_{i_{s}}^l+ \sum\limits_{1\leq s' < s} \sigma_{i_{s'}}^l-\sigma_{i_{s'}}^l\circ T_{i_{s'}} \right] \right) \leq e^{2\beta |J|}.
\eea \] 
Using, this and the fact that
$$ \left|\Av \left(f_1-f_2 \right) \mathcal{E}\right| \leq \Av \left|f_1-f_2 \right| \mathcal{E}\leq \Av f'  \mathcal{E} $$
since $\left|f_1-f_2\right|\leq f' $ by assumption yields 
$$ \left|f_s-f_s\circ T_{i_s} \right| \leq  \left(1-e^{-2\beta}\right)e^{2\beta |J|} \Av f'  \mathcal{E}.$$
Therefore we can use (\ref{whatCdoes}) with $C \defi  \left(1-e^{-2\beta}\right)e^{2\beta |J|}$ to estimate (\ref{estimateviadecomp}) further. The upshot is 
$$\text{l.h.s. of } (\ref{exchange}) \leq \E \frac{1}{2}\sum\limits_{s=1}^{|J|}C \frac{\left(k+|J|\right)B+B^*}{N-1} + \PP(\Omega^c)\leq \E |J| C \frac{\left(k+|J|\right)B+B^*}{N} + \PP(\tilde \Omega^c) .$$
Using  $(1-x)^{k} \geq 1-kx$, which holds for any $k\in \mathbbm{N}_0$ and $x\in[0,1]$,
$$\PP(\tilde \Omega^c) = 1- \left(1- \frac{\gamma}{N} \right)^{k-1}\leq \frac{\gamma k}{N}.$$
Next are the estimates for $\E |J|e^{2\beta|J|}$ and $\E |J|^2e^{2\beta|J|}$. Recall that $|J| = \sum\limits_{i=1}^{N-1} g_{i,N}$, and that the $g_{i,N}$ are i.i.d. Bernoulli($\gamma/N$). 
It thus holds 
\[\bea
E |J|e^{2\beta|J|} & = (N-1) \frac{\gamma}{N} e^{2\beta} \left(1+\frac{\gamma}{N}(e^{2\beta}-1)\right)^{N-2} \\
&  \leq \gamma \exp\left(\gamma(e^{2\beta}-1) + 2\beta \right) =: a_{\beta, \gamma},
\eea\]
where the inequality uses that $(1+\frac{x}{N})^N\leq e^x$. Similar considerations yield
$$ \E |J|^2e^{2\beta|J|} \leq \frac{\gamma^2}{2} \exp\left(\gamma(e^{2\beta}-1) + 4\beta \right)=: b_{\gamma,\beta}.$$
With these estimates we see that 
\beq\label{laststepindep}\text{l.h.s } (\ref{exchange})\leq \frac{1}{N} \left[k \left(B C(\be, \gamma) + \gamma \right)+b_{\gamma,\beta}\left(1-e^{-2\beta}\right)B+C(\be, \gamma) B^*\right]\eeq
since $a_{\beta, \gamma} \left(1-e^{-2\beta}\right) \leq C(\be, \gamma)$. 
The proof of Lemma \ref{swap_one} is therefore concluded by setting
\[ 
B \defi \frac{\gamma}{1-C(\be, \gamma)}, \quad B^* \defi \frac{b_{\gamma,\beta}\left(1-e^{-2\beta}\right)B}{1-C(\be, \gamma)}.
\] 
\end{proof}

\begin{proof}[Proof of Proposition \ref{spin_ind}]
Let $C(\be,\gamma)<1$ and $f:\Sigma_k\rightarrow \R,f':\Sigma_k\rightarrow \R^+$ be functions having the three properties stated in the assumption of the Proposition. Remembering that we want to decouple one spin at a time we set 
$$f_m= f\left(\sigma_{1}^1,..,\sigma_{m}^m,\sigma_{m+1}^1,..,\sigma_{k}^1 \right) \quad \text{ and } \quad f'_m = f'\left(\sigma_{1}^1,..,\sigma_{m}^m,\sigma_{m+1}^1,..,\sigma_{k}^1 \right).$$ 
With this notation the left hand side of (\ref{spin_ind_estim}) is equal to  
\[
\E \left| \frac{\left< f_1\right>^{\otimes n}}{\left< f'_1\right>^{\otimes n}}-\frac{\left< f_k\right>^{\otimes n}}{\left< f'_k\right>^{\otimes n}}\right|\,,
\]
and using the triangle inequality after telescopic decomposition yields 
\[
\mbox{l.h.s.} (\ref{spin_ind_estim}) \leq \sum\limits_{m=1}^{k-1} \E\left| \frac{\left<f_m\right>^{\otimes n}}{\left<f'_m\right>^{\otimes n}}-\frac{\left<f_{m+1}\right>^{\otimes n}}{\left<f'_{m+1}\right>^{\otimes n}} \right|.
\]
Clearly, 
\[
\left|\frac{\left<f_m\right>^{\otimes n}}{\left<f'_m\right>^{\otimes n}}-\frac{\left<f_{m+1}\right>^{\otimes n}}{\left<f'_{m+1}\right>^{\otimes n}}\right| = \left|\frac{\left<f_m-f_{m+1}\right>^{\otimes n}}{\left<f'_m\right>^{\otimes n}}+\frac{\left<f_{m+1}\right>^{\otimes n}\left<f'_{m}-f'_{m+1}\right>^{\otimes n}}{\left<f'_{m+1}\right>_{-}^{\otimes n}\left<f'_{m}\right>^{\otimes n}}\right| \,,
\]
which again by the triangle inequality is {\it at most}
\[
\left|\frac{\left<f_m-f_{m+1}\right>^{\otimes n}}{\left<f'_m\right>^{\otimes n}}\right|+\left|\frac{\left<f_{m+1}\right>^{\otimes n}\left<f'_{m}-f'_{m+1}\right>^{\otimes n}}{\left<f'_{m+1}\right>_{-}^{\otimes n}\left<f'_{m}\right>^{\otimes n}}\right|\leq \left|\frac{\left<f_m-f_{m+1}\right>^{\otimes n}}{\left<f'_m\right>^{\otimes n}}\right|+\left|\frac{\left<f'_{m}-f'_{m+1}\right>^{\otimes n}}{\left<f'_{m}\right>^{\otimes n}}\right|,
\]
where the last estimate is due to $\left|f_{m+1}\right|\leq f'_{m+1}$. Expanding the terms by $C_1$,$C_2$ respectively, taking expectations and applying Lemma \ref{swap_one} settles the proof of Proposition \ref{spin_ind}.
\end{proof}

\subsection{Annealed decoupling}\label{proof_mag_ind}
In this section we prove that finitely many {\it magnetizations} are independent, $\nu_\star$-distributed random variables. Precisely: 

\begin{lem}\label{mag_ind}
Assume that $\be,\gamma_0\geq 0$.  For any $k \in \N$, it then holds:  
\beq\label{mag_ind_claim}
\sup\limits_{\gamma\leq \gamma_0} d\left(\mathcal{L} \left(\left<\s_i\right>\right)_{i\leq k},\nu_\star\left(\be,h,\gamma \right)^{\otimes k}\right)\leq D(\be,\gamma_0) \frac{k^3}{N},
\eeq
with the function $(\beta, \gamma) \mapsto D(\be,\gamma)$ increasing in both variables and finite for $C(\be, \gamma_0)<1$. 
\end{lem}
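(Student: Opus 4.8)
The plan is to run the cavity recursion directly on the law of finitely many magnetizations, just as in the proof of Proposition \ref{prop_contraction}-i), but now keeping track of the $1/N$ errors coming from Proposition \ref{spin_ind}. The point of departure is the identity \eqref{mean_two}: for each of the $k$ sites we peel it off to the $(N-1)$-system, obtaining that $\left<\s_i\right>$ is, up to a quenched error controlled by quenched decoupling, a function of the magnetizations $\left(\left<\s_j\right>_-\right)_{j\in J_i}$ on the reduced system, where $J_i=\{j<N: g_{j,i}=1\}$ is a near-Poisson$(\gamma)$ random set. More precisely, I would first argue that one may peel off all $k$ chosen sites simultaneously: the events $\{J_i\cap J_{i'}=\emptyset \text{ for } i\neq i'\}$ and $\{i\notin J_{i'}\}$ hold with probability $1-O(k^2/N)$, so outside a negligible set the $k$ cavity fields involve disjoint blocks of $(N-1)$-magnetizations, exactly as in the treatment of the $B$-limit in Section \ref{free_energy_proof}.

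The main work is then a telescoping/contraction argument in the Monge-Kantorovich distance. Write $\Phi_\gamma$ for the (random) map sending the family of $(N-1)$-magnetizations to the vector $\left(m(J_i, \cdot)\right)_{i\le k}$ of cavity expressions \eqref{will show up again}; under the annealed measure its pushforward of $\mathcal L\big(\left(\left<\s_j\right>_-\right)_j\big)$ is close — by quenched decoupling, Proposition \ref{spin_ind}, at cost $O(k^3/N)$ since each of the $k$ coordinates involves a function of up to $|J_i|$ spins and $\E|J_i|^3$ is bounded — to the pushforward of the product measure $\nu_\star(\be,h,\gamma')^{\otimes(\text{blocks})}$. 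Applying $\Phi_\gamma$ to that product measure reproduces, coordinatewise and independently across the disjoint blocks, one step of the operator $T_{\be,h,\gamma'}$, so it yields $\mathcal L$ close to $T_{\be,h,\gamma'}(\nu_\star(\be,h,\gamma'))^{\otimes k}=\nu_\star(\be,h,\gamma')^{\otimes k}$; the partial-derivative bound $\|\partial_{x_j}m\|_\infty\le (e^\beta-1)/4$ from Section \ref{proof_T} shows this step is a contraction with factor $C(\be,\gamma')<1$. Finally, Proposition \ref{prop_contraction}-ii) replaces $\nu_\star(\be,h,\gamma')$ by $\nu_\star(\be,h,\gamma)$ at cost $\mathcal K_{\be,\gamma,\gamma'}|\gamma-\gamma'|=O(1/N)$ per coordinate. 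Putting the three contributions together,
\[
d\big(\mathcal L(\left<\s_i\right>)_{i\le k}, \nu_\star(\be,h,\gamma)^{\otimes k}\big)\le C(\be,\gamma)\, d\big(\mathcal L(\left<\s_i\right>)_{i\le k,-}, \nu_\star(\be,h,\gamma)^{\otimes k}\big) + O\!\left(\tfrac{k^3}{N}\right),
\]
and iterating this self-improving inequality over $N\to N-1\to\cdots$ (using $\sum_{m} \tfrac{k^3}{m}\cdot C^{N-m}$-type sums, which converge geometrically since $C<1$) produces the bound $D(\be,\gamma_0)k^3/N$ with $D$ visibly increasing in both arguments and finite when $C(\be,\gamma_0)<1$; a base case at small $N$ is trivial since $d\le 1$.

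I expect the main obstacle to be bookkeeping rather than conceptual: making the "peel off all $k$ sites at once" step rigorous requires care because the reduced magnetizations $\left<\s_j\right>_-$ appearing in different cavity fields must be genuinely the \emph{same} random variables on the \emph{same} $(N-1)$-system (so that the induction closes on a single quantity), while the blocks $J_i$ are only disjoint off an exceptional event — so one has to combine the exceptional-set estimate, the quenched-decoupling error for a $k$-coordinate vector-valued function, and the dilution-shift error, and then verify that the accumulated constants still satisfy a clean recursion of the form $D_N \le C\, D_{N-1} + \text{const}\cdot k^3/N$ with a geometric tail. The Lipschitz/supremum-norm inputs for quenched decoupling (choosing $f',C_1,C_2$ appropriately for the cavity function $m$, which is bounded by $1$ and has controlled differences) are routine given Proposition \ref{spin_ind}, but need to be stated explicitly.
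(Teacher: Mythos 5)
Your overall strategy — induction on $N$, peel off the $k$ target spins simultaneously, control the error via quenched decoupling (Proposition \ref{spin_ind}), a Lipschitz/contraction bound on the cavity map, and a dilution-shift correction (Proposition \ref{prop_contraction}-ii)) — is exactly the architecture of the paper's proof. You also correctly identify the role of the exceptional events (disjointness of the $J_i$ blocks, no direct edges among the $k$ peeled spins) and of the moment bounds $\E|J_i|^3$.

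The concrete gap is in the displayed recursion. You write
\[
d\bigl(\mathcal L(\langle\s_i\rangle)_{i\le k},\,\nu_\star^{\otimes k}\bigr)\ \le\ C(\be,\gamma)\,d\bigl(\mathcal L(\langle\s_i\rangle)_{i\le k,-},\,\nu_\star^{\otimes k}\bigr)+O\!\left(\tfrac{k^3}{N}\right),
\]
keeping the same $k$ coordinates on the reduced system. But the cavity step does not produce a coupling to the first $k$ magnetizations of the $(N-1)$-system; it produces a coupling to the magnetizations of the spins in the \emph{random} set $\bigcup_{i\le k}J_i$ on the $(N-k)$-system, whose cardinality is roughly $\gamma k$, not $k$. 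The paper therefore applies the induction hypothesis with a random number of coordinates $k'=\sum_{i\le N-k}\mathbf{1}_{\mathcal A_i}$ on the $(N-k)$-system, and needs the bound $\E(k')^3\le(\gamma^3+3\gamma^2+\gamma)k^3$ \emph{together} with the Lipschitz factor $(e^\beta-1)/4$; it is only the product of these two that is $\le C(\be,\gamma)$, and this is precisely where $C(\be,\gamma)<1$ enters. As literally stated, your inequality does not close: there is no reason a self-map of "$k$ magnetizations on the $(N)$-system" should be $C$-Lipschitz, since the relevant input variables are a different, larger set. Relatedly, your picture of iterating $N\to N-1\to\cdots$ and summing a geometric series is not what the argument does: the paper sets up a \emph{strong} induction (the lemma for all $(N',k')$ with $N'<N$ implies the lemma for $(N,k)$) and extracts $D(\be,\gamma)$ from a fixed-point condition $C(\be,\gamma)D + 3\gamma+3\gamma^2+w_{\be,\gamma}\le D$, rather than from a telescoping sum over system sizes. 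If you instead tried to iterate one step at a time, the number of relevant coordinates grows like $\gamma^j k$ and the bookkeeping of exceptional events and Lipschitz factors compounds; it is not clear that this closes without essentially re-deriving the strong-induction fixed point. The rest of your outline (the role of Proposition \ref{prop_contraction}-ii) for the $\gamma'\to\gamma$ shift, the optimal Poisson coupling for $|J_j|$, the choice of $f,f',C_1,C_2$ when applying Proposition \ref{spin_ind}, and the trivial base cases) is in line with the paper.
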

\begin{proof}
Let $\beta,\gamma_0,h\geq 0$ satisfy $C(\gamma_0,\be)<1$ and let $0\leq \gamma\leq\gamma_0$. The proof will be done by induction on $N$. Getting some technicalities out of the way first we note that the model is not well defined for $\gamma>N$ as then $p= \frac{\gamma}{N}>1$. Replacing in these cases $p$ by one will not harm any of the estimates we do for the induction step as they are all increasing in $\gamma$. With this convention, for $N=1$ the claim is trivial by picking $D(\be,\gamma)>1$ since $k$ can only be zero or one. For general $N$, $D(\be,\gamma)>2$ and $k\geq N/2$ the claim is also trivial as the left hand side of (\ref{mag_ind_claim}) is bounded by $k$ and the right hand side in this case is at least $k^2$. As for the interesting case, let $N\in \N$ and $k< N/2$. We set 
$$\mathcal{C} = \exp\left(\sum\limits_{j=1}^k\sigma_{N-k+j} \left(h-\beta\sum\limits_{i\leq N-k+j}g_{i,N-k+j} \sigma_i\right)\right).$$
We denote by $\left<\right>_{-k}$ the expectation w.r.t. $\mathcal{G}_{h,\beta,\frac{N-k}{N}\gamma,N-k}$, and  by $Y = \left(\left<\sigma_i\right>_{-k}\right)_{i\leq N-k}$  the vector of magnetizations on the $N-k$ system. (Considering the last $k$ spins instead of the first ones leads to lighter notation). What is absolutely crucial for the whole analysis is that $\mathcal C$ is independent of the randomness in $\left<\right>_{-k}$. We also notice that following (\ref{mean_one}) and (\ref{mean_two}) for $k$-many spins (instead of one) gives the identity 
\beq\label{selfsim}
\left<\sigma_j\right>= \frac{\left<\Av \sigma_j\mathcal{C}\right>_{-k}}{\left<\Av \mathcal{C}\right>_{-k}}
\eeq
for $N-k<j\leq N$. Here the average is taken over $\sigma_{N-k+1},..,\sigma_{N}\in \{0,1\}$.

For the remainder of this proof we set $X'$ to be a sequence of independent $\nu_\star\left(\be,h,\frac{N-k}{N}\gamma \right)$-distributed random variables. 

With the above notations, and by the triangle inequality, we have
\beq
 d\left(\mathcal{L} \left(\left<\s_i\right>\right)_{N-k< i \leq N},\nu_\star\left(\be,h,\gamma \right)^{\otimes k}\right)\leq \RM{1}+\RM{2}+\RM{3}\eeq
where 
\[ \bea
\RM{1} & \defi d\left(\mathcal{L} \left(\left<\s_j\right>\right)_{N-k< j \leq N},\mathcal{L} \left(\frac{\left<\Av \sigma_j\mathcal{C}\right>_Y}{\left<\Av \mathcal{C}\right>_Y}\right)_{N-k< j \leq N}\right), \\
\RM{2} & \defi d\left(\mathcal{L} \left(\frac{\left<\Av \sigma_j\mathcal{C}\right>_Y}{\left<\Av \mathcal{C}\right>_Y}\right)_{N-k< j \leq N},\mathcal{L} \left(\frac{\left<\Av \sigma_j\mathcal{C}\right>_{X'}}{\left<\Av \mathcal{C}\right>_{X'}}\right)_{N-k< j \leq N}\right), \\
\RM{3} &\defi d\left(\mathcal{L} \left(\frac{\left<\Av \sigma_j\mathcal{C}\right>_{X'}}{\left<\Av \mathcal{C}\right>_{X'}}\right)_{N-k< j \leq N},\nu_\star\left(\be,h,\gamma \right)^{\otimes k}\right)\,,
\eea \]
where all {\it all} averages are taken over $\sigma_{N-k+1},..,\sigma_{N}\in \{0,1\}$. The proof of  Lemma \ref{mag_ind} boils down to showing that: 
\begin{itemize}
\item[] $\RM{1}$ is 'small' by Proposition \ref{spin_ind}.
\item[] $\RM{2}$ is 'small' by the induction.
\item[] $\RM{3}$ is 'small' by construction of the $T$-operator.
\end{itemize}
(For the meaning of 'small', see below: \eqref{smallness_I}, \eqref{resII}  and \eqref{smallness_III} respectively).\\

A first estimate on the Monge-Kantorovich distance behind $\RM{1}$ is established by considering the coupling which is already given, and using \eqref{selfsim}. It holds:
\beq \label{RM1_eq1}
\RM{1} \leq \sum\limits_{j=1}^k\E\left|\frac{\left<\Av \sigma_{N-k+j}\mathcal{C}\right>_{-k}}{\left<\Av \mathcal{C}\right>_{-k}}- \frac{\left<\Av \sigma_{N-k+j}\mathcal{C}\right>_Y}{\left<\Av \mathcal{C}\right>_Y}\right|\,.
\eeq
To estimate the above we will use Lemma \ref{spin_ind}, which in turns requires a control of $\mathcal C$. To this end, let 
\beq \label{omega_1}
\Omega_1 \defi \left\{g_{i,j} = 0, \, \forall \, i,j>N-k\right\}
\eeq
be the event that there are no "direct" interactions between the last $k$ spins. Furthermore, let
\[
\Omega_2 \defi \left\{g_{i,j}=0 \text{ or } g_{i,j'}= 0 \; \forall i\leq N-k , j<j'>N-k\right\}
\] 
be the event that there are no interactions of the last $k$ spins "via" a single other spin. Finally, let
\[
\tilde \Omega := \Omega_1 \cap \Omega_2\,.
\]
(Remark that for fixed $k$ both $\tilde \Omega_1$ and $\tilde \Omega_2$ are likely to happen, for larger and larger $N$, and so is $\tilde \Omega$). 

We observe that all fractions appearing in (\ref{RM1_eq1}) are on $[0,1]$, which implies that 
the sum is bounded by $k$:  using this rough estimate, we thus obtain
\beq\label{RM1_eq2}
\RM{1} \leq \sum\limits_{j=1}^k\E \boldsymbol{1}_{\tilde \Omega}\left|\frac{\left<\Av \sigma_{N-k+j}\mathcal{C}\right>_{-k}}{\left<\Av \mathcal{C}\right>_{-k}}- \frac{\left<\Av \sigma_{N-k+j}\mathcal{C}\right>_Y}{\left<\Av \mathcal{C}\right>_Y}\right| + k\PP\left(\tilde \Omega^c\right)\,.
\eeq
Under the light of Lemma \ref{spin_ind}, we set $f \defi \boldsymbol{1}_{\tilde \Omega}\Av \sigma_{N-k+j}\mathcal{C}$ and $f' \defi \Av \mathcal{C}$. Using that on $\tilde \Omega$ each spin in $\mathcal C$ appears at most once, changing the value of one spin changes the exponent by $\beta$, at most. 
We apply Lemma \ref{spin_ind} with $C_1=C_2=e^\beta-1$. This yields 
\[
\RM{1} \leq 2 k \E S \left(e^\beta-1\right) \frac{S B+B^*}{N}   + k\PP\left(\tilde \Omega^c\right)
\]
where $S$ is the number of spins $\mathcal C$ depends on, to wit: 
\[
S= \sum\limits_{j= N-k+1}^N \sum\limits_{i=1}^{j-1} g_{i,j},
\] 
which is Binomial($kN+\frac{k(k+1)}{2},\frac{\gamma}{N} $)-distributed. Rough estimates on the first two moments of $S$, recalling that $k\leq \frac{N}{2}$, yield
\[
\E[S] \leq 2k\gamma \quad \mbox{ and } \quad \E[S^2]\leq 6k^2(\gamma+\gamma^2)
\]
It holds: 
\beq\label{Omega_calc}\PP\left(\tilde\Omega_1^c\right) = 1-\left(1-\frac{\gamma}{N}\right)^{k\left(k-1\right)}\leq \frac{\gamma k^2}{N} \eeq
\[ \PP\left(\tilde \Omega_2^c\right) \leq \sum\limits_{i=1}^{N-k}\sum\limits_{j=N-k+1}^{N}\sum\limits_{j'= j+1}^N\PP(g_{i,j}=g_{i,j'} = 1)\leq \frac{\gamma^2 k^2}{N} \]
and therefore $\PP(\tilde \Omega^c)\leq \frac{(\gamma+\gamma^2) k^2}{N}$. All in all, 
\beq \label{smallness_I}
\RM{1} \leq \left(e^\beta-1\right) \frac{ 12 (\gamma+\gamma^2) B+4 \gamma B^*}{N}k^3   + \frac{\left(\gamma+\gamma^2\right) k^3}{N} = :w_{\beta,\gamma}\frac{k^3}{N}\,,
\eeq
where $w_{\beta,\gamma}$ stands (here and throughout) for a constant depending on $\beta,\gamma$ only, which is increasing in both variables. \\

We next address $\RM{2}$ We estimate the Monge-Kantorovich distance using that the randomness in $\mathcal{C}$ is independent of $X',Y$ and taking the infimum over any coupling of $X'$ and $Y$: 
\[
\RM{2} \leq \inf \sum\limits_{j=1}^k  \E \left|\frac{\left<\Av \sigma_i\mathcal{C}\right>_Y}{\left<\Av \mathcal{C}\right>_Y} - \frac{\left<\Av \sigma_i\mathcal{C}\right>_X'}{\left<\Av \mathcal{C}\right>_X'} \right|
\] 
By the same estimate which leads to (\ref{RM1_eq2}), 
\beq\label{RM2_eq2}
\RM{2} \leq \inf \sum\limits_{j=1}^k  \E \left|\boldsymbol{1}_{\tilde\Omega}\frac{\left<\Av \sigma_i\mathcal{C}\right>_Y}{\left<\Av \mathcal{C}\right>_Y} - \boldsymbol{1}_{\tilde \Omega}\frac{\left<\Av \sigma_i\mathcal{C}\right>_X'}{\left<\Av \mathcal{C}\right>_X'} \right|+k\PP(\tilde \Omega^c).
\eeq
Consider the random functions 
\[
s_j:[0,1]^{N-k} \rightarrow [0,1], x\rightarrow \boldsymbol{1}_{\tilde \Omega}\frac{\left<\Av \sigma_{N-k+j}\mathcal{C}\right>_x}{\left<\Av \mathcal{C}\right>_x}.
\]
for $j\leq k$. On $\tilde \Omega$ the representation $\mathcal{C}= \prod\limits_{l=1}^k \mathcal{C}_l$ holds, where 
\[ 
\mathcal{C}_l \defi \exp\left(\sigma_{N-k+l} \left(h-\beta\sum\limits_{i\leq N-k}g_{i,N-k+l} \sigma_i\right)\right).
\]
We use this expand the $s$-functions: precisely we write:
\beq\label{DecoupOnOmega}
s_j(x) = \boldsymbol{1}_{\tilde \Omega}\frac{\left<\Av \sigma_{N-k+j} \prod\limits_{l=1}^k \mathcal{C}_l\right>_x}{\left<\Av \prod\limits_{l=1}^k \mathcal{C}_l\right>_x} = \boldsymbol{1}_{\tilde \Omega}\frac{\left<\Av_j \sigma_{N-k+j} \mathcal{C}_j \prod\limits_{l=1,l\neq j}^k \Av_l \mathcal{C}_l\right>_x}{\left<\prod\limits_{l=1}^k \Av_l \mathcal{C}_l\right>_x},
\eeq
where $\Av_l$ is the Average over $\sigma_{N-k+l}\in\{0,1\}$. Since $\left<.\right>_x$ is a product measure, and since the $C_l$ depend, on $\tilde \Omega$, on disjoint sets of $\sigma$, cancellations lead to 
\beq\label{DecoupOnOmega2}
s_j(x) = \boldsymbol{1}_{\tilde \Omega}\frac{\left<\Av_j \sigma_{N-k+j} \mathcal{C}_j \right>_x}{\left<\Av_j \mathcal{C}_j\right>_x}.
\eeq
Since $\mathcal C_j$ depends only on those $\sigma_i$ for which $g_{i,N-k+1} =1$, $s_j\left(x\right)$ only depends on those $x_i$. Consider the derivative in such a direction:
\beq\label{estderiv}\left| \partial_{x_i} s_j(x)  \right| = \boldsymbol{1}_{\tilde \Omega}\frac{\left|\frac{\partial}{\partial x_i}\left(e^h \prod\limits_{l\leq N-k}\left< \exp\left(-\beta g_{l,j}\sigma_l\right)\right>_{x}\right)\right|}{\left(1+e^h \prod\limits_{l\leq N-k}\left< \exp\left(-\beta g_{l,j}\sigma_l\right)\right>_{x}\right)^{2}}.
\eeq
The numerator is given by
$$ \left|e^h \left(e^{-\beta}-1\right)\prod\limits_{l\leq N-k, l\neq i}\left< \exp\left(-\beta g_{l,j}\sigma_l\right)\right>_{x} \right|\leq \left(e^{\beta}-1\right)e^h \prod\limits_{l\leq N-k}\left< \exp\left(-\beta g_{l,j}\sigma_l\right)\right>_{x},
$$
hence the following estimate holds 
\beq\label{derivestim} \| \partial_{x_j} s_j \|_{\infty} \leq \boldsymbol{1}_{\tilde \Omega}\textbf{1}_{\{g_{i,j} = 1\}}\left(e^{\beta}-1\right) \sup\limits_{t\geq 0} \frac{t}{(1+t)^2} \leq\boldsymbol{1}_{\tilde \Omega}\textbf{1}_{\{g_{i,j} = 1\}} \frac{e^{\beta}-1}{4} .\eeq 
Using this in \eqref{RM2_eq2},
\[ \bea
\RM{2} - k\PP(\tilde \Omega^c) & \leq \inf \sum\limits_{j=N-k+1}^N  \E\boldsymbol{1}_{\tilde \Omega}\sum\limits_{i=1}^{N-k} \left|\frac{\partial s_j}{\partial x_i}\right|_{\infty}\left|X_i-Y_i\right| \\
& \leq \frac{e^{\beta}-1}{4} \inf \sum\limits_{i=1}^{N-k} \E \boldsymbol{1}_{\tilde \Omega}\sum\limits_{j=1}^k\textbf{1}_{\{g_{i,N-k+j} = 1\}} \left|X_i-Y_i\right| \,.
\eea \]
Introduce now the event $\mathcal{A}_i \defi \{\exists j\leq k: g_{i,N-k+j} = 1\}$. On $\tilde \Omega$
it plainly holds that 
\[
\sum\limits_{j=1}^k\textbf{1}_{\{g_{i,N-k+j} = 1\}} = \textbf{1}_{{\mathcal A}_i}\,,
\]
and since the newly introduced $\mathcal{A}$-events are independent, and independent of $X'$ and $Y$, we get
\[
\RM{2} - k\PP(\tilde \Omega^c)\leq  \frac{e^{\beta}-1}{4} \inf \sum\limits_{i=1}^{N-k} \E \textbf{1}_{\mathcal{A}_i} \left|X_i-Y_i\right|\,.
\]
By the induction assumption, conditionally on all $\mathcal{A}_i$ for $i \leq N-k$, we have
\[
\RM{2} \leq  \frac{e^{\beta}-1}{4}  D(\be,\gamma) \frac{\E \left(\sum\limits_{i=1}^{N-k}\textbf{1}_{\mathcal{A}_i}\right)^3}{N-k}+ k\PP(\tilde \Omega^c),
\]
since $\frac{N-k}{N}\gamma < \gamma_0$. 
We now observe that 
\[
\sum\limits_{i=1}^{N-k}\textbf{1}_{\mathcal{A}_i} \stackrel{(d)}{=}  \text{Binomial}\left(N-k, 1-\left(1-\frac{\gamma}{N}\right)^k \right), 
\]
hence, by simple estimates,
\[
\E \left(\sum\limits_{i=1}^{N-k}\textbf{1}_{\mathcal{A}_i}\right)^3 \leq \left(\gamma^3+3\gamma^2+\gamma\right)k^3
\]
Recalling the estimates on $\PP\left(\tilde \Omega^c\right)$, since $k\leq N/2$, we therefore have
\beq \bea \label{resII}
\RM{2} & \leq \frac{e^{\beta}-1}{2}\left(\gamma^3+3\gamma^2+\gamma\right)  D(\be,\gamma) \frac{k^3}{N}+  \frac{\left(\gamma+\gamma^2\right) k^3}{N} \\
& \leq C(\be,\gamma)D(\be,\gamma) \frac{k^3}{N} +  \frac{\left(\gamma+\gamma^2\right) k^3}{N}\,.
\eea \eeq
We next move to $\RM{3}$ By the triangle inequality 
\[ \bea
\RM{3} & \leq d\left(\mathcal{L} \left(\frac{\left<\Av \sigma_j\mathcal{C}\right>_{X'}}{\left<\Av \mathcal{C}\right>_{X'}}\right)_{N-k< j \leq N},\nu_\star\left(\be,h, \frac{N-k}{N}\gamma  \right)^{\otimes k}\right)\\
& \hspace{4cm} + d\left(\nu_\star\left(\be,h, \frac{N-k}{N}\gamma  \right)^{\otimes k},\nu_\star\left(\be,h,\gamma  \right)^{\otimes k}\right).
\eea\]
Consider now independent random variables $Z_1,..,Z_k$ which are $\nu_\star\left(\be,h,\gamma  \right)^{\otimes k}$-distributed. (Remark that the $Z$'s may depend on the randomness appearing in $X',\mathcal{C}$: a concrete choice will be given only later, see \eqref{uno} and \eqref{due} below). By definition of the Monge-Kantorovich distance we have that
\[ \bea
& d\left(\mathcal{L} \left(\frac{\left<\Av \sigma_i\mathcal{C}\right>_{X'}}{\left<\Av \mathcal{C}\right>_{X'}}\right)_{N-k< i \leq N},\nu_\star\left(\be,h, \frac{N-k}{N}\gamma  \right)^{\otimes k}\right) \\
& \hspace{4cm} \leq \sum\limits_{j=1}^k \E \left|\frac{\left<\Av \sigma_{N-k+j}\mathcal{C}\right>_{X'}}{\left<\Av \mathcal{C}\right>_{X'}} - Z_j\right| \\
& \hspace{4cm} \leq \sum\limits_{j=1}^k \E \boldsymbol{1}_{\tilde \Omega}\left|\frac{\left<\Av \sigma_{N-k+j}\mathcal{C}\right>_{X'}}{\left<\Av \mathcal{C}\right>_{X'}} - Z_j\right| + k\PP(\tilde \Omega^c), 
\eea \]
the last step by restricting to $\tilde \Omega$. Recall from \eqref{DecoupOnOmega} that 
\beq\label{RM3_calc1}
\boldsymbol{1}_{\tilde \Omega}\frac{\left<\Av \sigma_{N-k+j}\mathcal{C}\right>_{X'}}{\left<\Av \mathcal{C}\right>_{X'}} =  \boldsymbol{1}_{\tilde \Omega}\frac{\left<\Av_j \sigma_{N-k+j} \mathcal{C}_j \right>_{X'}}{\left<\Av_j \mathcal{C}_j\right>_{X'}}
\eeq
holds and that on $\tilde \Omega$ the $\mathcal{C}_j$ depend on different $\sigma_i$. Therefore the right hand side depends for each $j$ on different $X_i$. Computing the averages, and plugging in the definition of $\mathcal{C}_j$, leads to 
\[
(\ref{RM3_calc1}) = \boldsymbol{1}_{\tilde \Omega}\frac{\left<\exp\left(h-\beta\sum\limits_{i\in J_j} \sigma_i\right)\right>_{X'}}{1+\left<\exp\left(h-\beta\sum\limits_{i\in J_j} \sigma_i\right)\right>_{X'}} = \boldsymbol{1}_{\tilde \Omega}\left(1+ \left<\exp\left(h-\beta\sum\limits_{i\in J_j} \sigma_i\right)\right>_{X'}^{-1}\right)^{-1}
\]
where $J_j = \{i\leq N-k: g_{i,N-k+j} = 1\}$ are disjoint sets on $\tilde \Omega$. We now consider $r_j$ to be a Poisson($\frac{N-k}{N}\gamma$)-distributed random variables independent of each other and $X'$, but optimally coupled to $|J_j|$. This is possible since the $|J_j|$ are independent of each other. If $\tilde \Omega$ occurs and $r_j = |J_j|$, which are events independent of $X'$ we set 
\beq \label{uno}
Z_j = \left(1+ \left<\exp\left(h-\beta\sum\limits_{i\in J_j} \sigma_i\right)\right>_{X'}^{-1}\right)^{-1},
\eeq
otherwise we set
\beq \label{due}
Z_j = \left(1+ \left<\exp\left(h-\beta\sum\limits_{i\leq r_j} \sigma_i\right)\right>_{X_j}^{-1}\right)^{-1}\,,
\eeq
where $X_j= \left(X_{j,1},X_{j,2},...\right)$ is a sequence of independent $\nu_\star\left(\be,h,\gamma  \right)$ distributed random variables independently of $X'$ and of $X_l$ for $l\neq j$. With this, the $Z_j$ are independent 
random variables, with identical distribution given by
$$
T_{\be,h,\frac{N-k}{N}\gamma}\nu_\star\left(\be,h,\frac{N-k}{N}\gamma  \right) = \nu_\star\left(\be,h,\frac{N-k}{N}\gamma  \right).
$$
It then holds: 
\[
\boldsymbol{1}_{\tilde \Omega}\left|\frac{\left<\Av \sigma_{N-k+j}\mathcal{C}\right>_{X'}}{\left<\Av \mathcal{C}\right>_{X'}} - Z_j\right| \leq \boldsymbol{1}_{\{|J_j|\neq r_j\}}\,,
\]
since the term is zero on $\{|J_j| = r_j\}$ and bounded by one no-matter-what. Collecting all estimates we thus have 
\beq\label{RM3_calc2}
\RM{3} \leq  \sum\limits_{j=1}^k \PP\left(|J_j|\neq r_j\right) +k\PP(\tilde{\Omega}^c)+d\left(\nu_\star\left(\be,h, \frac{N-k}{N}\gamma  \right)^{\otimes k},\nu_\star\left(\be,h,\gamma  \right)^{\otimes k}\right)
\eeq
Taking the infimum only coordinate by coordinate, the rightmost term above is at most
\[ 
k d\left(\nu_\star\left(\be,h, \frac{N-k}{N}\gamma  \right),\nu_\star\left(\be,h,\gamma  \right)\right) 
\leq k \left|\frac{N-k}{N}\gamma-\gamma \right| = \gamma \frac{k^2}{N} \leq \gamma \frac{k^3}{N}, 
\]
the first inequality by Proposition \ref{prop_contraction}. 

As for the first term on the r.h.s. of \eqref{RM3_calc2}, by the optimality of the coupling and since all summands are identical, we see that it equals, in fact, $\frac{k}{2} d_{TV}(\mathcal{L}(|J_1|),\mathcal{L}(r_1))$. Since $|J_j|$ is Binomial$\left(N-k,\frac{\gamma}{N}\right)$-distributed and $r_j$ is Poisson($\frac{N-k}{N}\gamma$)-distributed, their total variation is, by well-known estimates, at most 
\[ 
\frac{\gamma^2 \left(N-k\right)}{N^2} \leq \gamma^2 \frac{k^3}{N}.
\]
The middle term in \eqref{RM3_calc2} is bounded by $\left(\gamma+\gamma^2\right)\frac{k^3}{N}$ by \eqref{Omega_calc} and ff.. All in all, we have
\beq \label{smallness_III}
\RM{3} \leq 2\left(\gamma+\gamma^2 \right) \frac{k^3}{N}\,.
\eeq
Putting together the estimates \eqref{smallness_I}, \eqref{resII} and \eqref{smallness_III}, we thus have that
\[
\RM{1}+\RM{2}+\RM{3} \leq  \left(C(\be,\gamma)D(\be,\gamma)  + 3\gamma+3\gamma^2+ w_{\beta,\gamma}\right)\frac{k^3}{N}\,.
\]
The above holds for {\it any} choice of $D$ "inherited" from the induction step, but we now specify a concrete choice: we let 
\[
D(\be,\gamma) \defi \max\left\{2, \frac{3\gamma+3\gamma^2 + w_{\beta,\gamma}}{1-C(\be,\gamma)} \right\}.
\]
(It is immediate to check that this function satisfies the required monotonicity). 

The proof of  Lemma \ref{mag_ind} is therefore concluded by observing that 
\[
C(\be,\gamma)D(\be,\gamma)  + 3\gamma+3\gamma^2+ w_{\beta,\gamma} \leq D(\be,\gamma_0).
\]
\end{proof}
\vspace{0.6cm}

\subsection{Proof of Theorem \ref{full_ind}}  \label{finish_full_ind}
Let $h,\beta,\gamma \geq 0$ with $C(\be, \gamma) < 1$ and  $k\leq N\in \N$. Consider a function $f: \{0,1\}^k \rightarrow \R$ and a Lipschitz continuous function $g:\left[\min  f, \max f\right] \rightarrow \R$. By the triangle inequality, 
\[ 
\left|\E g\left(\left<f\left(\sigma_1,..,\sigma_k\right)\right>\right) - \E g\left(\E\left[f\left(B_1,..,B_k\right) | X\right]\right) \right|\leq \RM{1}+\RM{2},
\]
where 
\[
\RM{1} \defi \E \left| g\left(\left<f\left(\sigma_1,..,\sigma_k\right)\right>\right) - g\left(\left<f\left(\sigma_1,..,\sigma_k\right)\right>_Y\right) \right|,
\]
\[
\RM{2} \defi \left| \E g\left(\left<f\left(\sigma_1,..,\sigma_k\right)\right>_Y\right) - \E g\left(\E\left[f\left(B_1,..,B_k\right) | X\right]\right) \right|,
\]
and $Y \defi \left(\left<\s_i\right>\right)_{i\leq N}$.

As for $\RM{1}$, since $g$ is $L_g$-Lipschitz, 
\[
\RM{1}\leq L_g \E \left| \left<f\left(\sigma_1,..,\sigma_k\right)\right> - \left<f\left(\sigma_1,..,\sigma_k\right)\right>_Y \right|.
\]
Therefore Lemma \ref{spin_ind} implies, with $f' \defi \max|f|$, $C_1\defi 2$ and $C_2\defi 0$, that 
\beq \label{estimateI}
\RM{1}\leq L_g  2k  \|f \|_\infty \frac{kB+B^*}{N}\leq L_g \|f \|_\infty \left(2B+2B^*\right)\frac{k^3}{N},
\eeq
where the second estimate simply uses that $k \in \N$. \\

As for $\RM{2}$, we compute the conditional expectation
\[ \bea
\E\left[f\left(B_1,..,B_k\right) | X\right] & = \sum\limits_{\sigma\in\Sigma_k} \left(\prod\limits_{i=1}^k \PP(B_i = \sigma_i|X)\right)f\left(\sigma_1,..,\sigma_k\right) \\
&  = \sum\limits_{\sigma\in\Sigma_k}\prod\limits_{i=1}^k  \left(X_i \boldsymbol{1}_{\sigma_i=1}+\left(1-X_i\right) \boldsymbol{1}_{\sigma_i=0} \right)f\left(\sigma_1,..,\sigma_k\right) = \left< f\left(\sigma_1,..,\sigma_k\right)\right>_X.
\eea \]
Hence, by the Lipschitz-continuity of $g$, and for any coupling of $X$ and $Y$, it holds: 
\[
\RM{2} \leq L_g \E \left| \left<f\left(\sigma_1,..,\sigma_k\right)\right>_Y - \left< f\left(\sigma_1,..,\sigma_k\right)\right>_X \right|
\]
Consider the function $s: [0,1]^k \rightarrow \R, x \rightarrow \left<f\left(\sigma_1,..,\sigma_k\right)\right>_x$. One easily sees that  $\| \partial_{x_i} s\|_\infty \leq 2 \| f \|_\infty$. Using this, 
\[
\RM{2} \leq L_g \E  \sum_{i=1}^{k} \| \partial_{x_i} s\|_\infty \left|Y_i-X_i \right|\leq 2 L_g  \|f\|_\infty \E  \sum_{i=1}^{k}\left|Y_i-X_i \right|.
\]
Since we considered an arbitrary coupling of $X$ and $Y$ the inequality holds still true as we take the infimum over all couplings. This yields
\[
\RM{2} \leq 2 L_g  \|f\|_\infty  \inf \E  \sum_{i=1}^{k}\left|Y_i-X_i \right| = 2 L_g \|f \|_\infty d\left(\mathcal{L}Y, \mathcal{L}X\right),
\]
by the definition of the Monge-Kantorovich distance. Plugging in the distributions of $X$ and $Y$ gives
\[
\RM{2} \leq 2 L_g \|f \|_\infty d\left(\mathcal{L}\left(\left<\sigma_i\right>\right)_{i\leq k}, \nu_\star\left(\be,h,\gamma \right)^{\otimes k}\right).,
\]
By Lemma \ref{mag_ind}, and \eqref{estimateI}, we obtain 
\beq
\RM{1}+\RM{2} \leq L_g \|f\|_\infty \left(2B+2B^*\right)\frac{k^3}{N} + 2 L_g \|f \|_\infty D(\be,\gamma) \frac{k^3}{N}.
\eeq
This, together with the $(\beta, \gamma)$-monotonicity of $B$,$B^*$ and $D(\be,\gamma)$, settles 
the proof of Theorem \ref{full_ind}. \\
${}$ \hfill $\square$

\section{Appendix}
We give here a proof of Fact \ref{interpolation}, together with some technical estimates on Binomial-distributions. 
\begin{lem}\label{lem_interp1}
The time-derivative of the interpolating free energy is given by
\[
\varphi'(t) = \frac{\gamma}{2}\left(\frac{1}{N^2}\sum\limits_{i,j\leq N}\E \log \left<\exp\left( -\beta \sigma_i \sigma_j\right)\right>_t -\frac{2}{N}\sum\limits_{i=1}^N  \E \log \left<\exp\left(\sigma_i \log \left<e^{-\beta\varepsilon}\right>_{X^{\boldsymbol{\delta}}_{i,1}}\right)\right>_t\right)+o(1)
\]
\end{lem}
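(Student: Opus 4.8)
The plan is to differentiate $\varphi$ directly in $t$. Note first that, for a fixed realization of the edge-variables $(g^*_{i,j})$ and $(\hat g_{i,j})$, the partition function carries \emph{no} $t$-dependence: all of it sits in the product-Bernoulli laws, whose parameters are $\gamma t/N$ and $\gamma(1-t)/N$. Conditioning on the (finitely many) edge-configurations, $\varphi(t)$ becomes a finite sum of polynomials in $t$, so differentiation under $\E$ and term by term is legitimate (the infinite sum over $\boldsymbol\delta\in\N^K$ inside each partition function is handled separately by the a.s.\ finiteness of the Derrida--Ruelle normalization and dominated convergence). I would then invoke the elementary fact that if $g\sim$ Bernoulli$(p)$ and $\psi$ depends on $g$ and on further, independent randomness, then $\frac{d}{dp}\E\psi=\E[\psi|_{g=1}]-\E[\psi|_{g=0}]$. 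Applying this to each $g^*_{i,j}$ (rate $+\gamma/N$) and each $\hat g_{i,j}$ (rate $-\gamma/N$) gives
\[
\varphi'(t)=\frac{\gamma}{N^2}\sum_{i<j}\E\log\frac{\mathcal Z|_{g^*_{i,j}=1}}{\mathcal Z|_{g^*_{i,j}=0}}-\frac{\gamma}{N^2}\sum_{i,j=1}^N\E\log\frac{\mathcal Z|_{\hat g_{i,j}=1}}{\mathcal Z|_{\hat g_{i,j}=0}}.
\]

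Second, I would identify the log-ratios as cavity Gibbs averages: switching $g^*_{i,j}$ from $0$ to $1$ inserts the factor $e^{-\beta\sigma_i\sigma_j}$ into every Gibbs weight, so the first ratio equals $\langle e^{-\beta\sigma_i\sigma_j}\rangle_t^{(g^*_{i,j}=0)}$, the $\mathcal G_t$-average computed with the edge $(i,j)$ switched off; likewise the second ratio equals $\langle \exp(\sigma_i\log\langle e^{-\beta\varepsilon}\rangle_{X^{\boldsymbol\delta}_{i,j}})\rangle_t^{(\hat g_{i,j}=0)}$, the superscript $\boldsymbol\delta$ inside coupling to the $\boldsymbol\delta$-average of $\mathcal G_t$. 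The role of the $o(1)$ in the statement is precisely to drop the ``switched off'' superscript: since $\mathcal G_t$ coincides with the switched-off measure on the event $\{g^*_{i,j}=0\}$ (probability $1-\gamma t/N$) and with the switched-on measure otherwise, one has
\[
\bigl|\E\log\langle f\rangle_t-\E\log\langle f\rangle_t^{(g^*_{i,j}=0)}\bigr|\le\frac{\gamma t}{N}\,\bigl|\E\log\langle f\rangle_t^{(g^*_{i,j}=1)}-\E\log\langle f\rangle_t^{(g^*_{i,j}=0)}\bigr|,
\]
and as $e^{-\beta\sigma_i\sigma_j}\in[e^{-\beta},1]$ the bracket is $\le\beta$; summed against the prefactor $\gamma/N^2$ over $O(N^2)$ edges the total error is $O(1/N)=o(1)$. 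The $\hat g$-sum is handled identically, using that $\langle e^{-\beta\varepsilon}\rangle_{X^{\boldsymbol\delta}_{i,j}}=1-(1-e^{-\beta})X^{\boldsymbol\delta}_{i,j}\in[e^{-\beta},1]$, so that the relevant one-site weight again lies in $[e^{-\beta},1]$.

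Third, I would symmetrize. In the $g^*$-part, $\sum_{i<j}=\tfrac12(\sum_{i,j}-\sum_{i=j})$ and $\sigma_i^2=\sigma_i$; the diagonal contributes $\tfrac{\gamma}{2N^2}\sum_i\E\log\langle e^{-\beta\sigma_i}\rangle_t=O(1/N)=o(1)$, leaving $\tfrac{\gamma}{2N^2}\sum_{i,j\le N}\E\log\langle e^{-\beta\sigma_i\sigma_j}\rangle_t$. In the $\hat g$-part, for each fixed $i$ the pairs $\bigl(\hat g_{i,j},(X^{\boldsymbol\delta}_{i,j})_{\boldsymbol\delta}\bigr)$, $j=1,\dots,N$, are i.i.d., so the whole construction is exchangeable under permutations of the $j$-index and $\E\log\langle \exp(\sigma_i\log\langle e^{-\beta\varepsilon}\rangle_{X^{\boldsymbol\delta}_{i,j}})\rangle_t$ does not depend on $j$; hence the $j$-sum equals $N$ times its value at $j=1$, producing $-\tfrac{\gamma}{N}\sum_{i\le N}\E\log\langle \exp(\sigma_i\log\langle e^{-\beta\varepsilon}\rangle_{X^{\boldsymbol\delta}_{i,1}})\rangle_t$. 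Factoring $\gamma/2$ out of the two surviving terms yields exactly the asserted identity.

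The genuinely delicate point is the ``remove the cavity'' step, i.e.\ establishing the $o(1)$: it rests on the uniform boundedness of the one-edge weights together with the one-variable mixture representation of $\mathcal G_t$ used above. The differentiability and term-by-term interchange, while requiring a word on the $\boldsymbol\delta$-series, are routine, and the final symmetrization is pure bookkeeping.
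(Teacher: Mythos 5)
Your argument is correct and reaches the paper's conclusion via the same edge-by-edge cavity-switching mechanism, but it handles the differentiation by a cleaner, more elementary route. The paper couples $g^{*,t+u}$ to $g^{*,t}$ and $\hat g^{\,t}$ to $\hat g^{\,t+u}$ through fresh Bernoulli variables $b^*,\hat b$, forms the difference quotient $\tfrac{1}{u}\bigl(\varphi(t+u)-\varphi(t)\bigr)$, and argues that as $u\to 0$ only the event that exactly one fresh edge switches on survives (everything else being $O(u^2)$); this produces $\varphi'(t)$ as per-edge cavity averages carrying prefactors $\tfrac{N-1}{N-t\gamma}$, $\tfrac{N}{N-\gamma(1-t)}$ and extra factors $(1-g^{*,t}_{1,2})$, $(1-\hat g^{\,t}_{1,2})$, all of which must then be discarded as $o(1)$. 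You bypass all of that with the identity $\tfrac{d}{dp}\E\,\psi(g,\cdot)=\E\psi(1,\cdot)-\E\psi(0,\cdot)$ for independent $g\sim\mathrm{Bernoulli}(p)$, and the conditioning-on-edge-configurations observation makes the interchange of $\tfrac{d}{dt}$ and $\E$ immediate. Your superscript-removal step, based on $\langle f\rangle_t$ and $\langle f\rangle_t^{(g^*_{i,j}=0)}$ agreeing except on an event of probability $\gamma t/N$ together with $f\in[e^{-\beta},1]$, is exactly the content of the paper's $o(1)$ clean-up in tidier form, and both arguments then symmetrize identically. A minor bonus of your route: the paper's proof actually ends with a fresh variable $\chi^{\boldsymbol\delta}$ (after one more $o(1)$ replacement), even though the lemma is stated with the in-Hamiltonian variable $X^{\boldsymbol\delta}_{i,1}$; your endpoint, reached directly from the exchangeability in $j$, matches the statement literally.
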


\begin{proof}
We lighten notation by setting $\chi^{\boldsymbol{\delta}}_{i,j} = \log \left<e^{-\beta\varepsilon}\right>_{X^{\boldsymbol{\delta}}_{i,j}}$. 

It holds:
\begin{equation}\label{derivcalc}\frac{1}{u} \left(\varphi(t+u)-\varphi(t)\right) = \frac{1}{N u}\E \log \frac{\sum\limits_{{\boldsymbol{\delta}}\in\N^K}\sum\limits_{\sigma \in \Sigma_N} v_{\boldsymbol{\delta}} \exp\left(H^{\boldsymbol{\delta}}_{N,t+u}(\sigma)\right)}{\sum\limits_{{\boldsymbol{\delta}}\in\N^K}\sum\limits_{\sigma \in \Sigma_N} v_{\boldsymbol{\delta}} \exp\left(H^{\boldsymbol{\delta}}_{N,t}(\sigma)\right)} \end{equation}
where the joint distribution of $H^{\boldsymbol{\delta}}_{N,t+u}(\sigma)$ and $H^{\boldsymbol{\delta}}_{N,t}(\sigma)$ can be chosen in any way that does not touch the marginals.
We write $g^{*t+u}$ for the $g^*$ in $H^{\boldsymbol{\delta}}_{N,t+u}(\sigma)$ and $g^{*t}$ for the $g^*$ in $H^{\boldsymbol{\delta}}_{N,t}(\sigma)$ to distinguish them and analogously for $\hat{g}$.
We set 
$$ g^{*t+u}_{i,j}= g^{*t}_{i,j}+ \left(1-g^{*t}_{i,j}\right)b^*_{i,j} \mbox{ and } \hat{g}^{t}_{i,j}= \hat{g}^{t+u}_{i,j}+\left(1-\hat{g}^{t+u}_{i,j}\right)\hat{b}_{i,j}$$
where the $b^*$ are independent Bernoulli $(\frac{\gamma u}{N-\gamma t})$ random variables and the $\hat{b}$ are independent Bernoulli $(u \frac{\gamma}{N-\gamma\left(1-t-u\right)})$ random variables. $b^*$,$\hat{b}$ are chosen independently and independent of any other randomness in $H^{\boldsymbol{\delta}}_{N,t}(\sigma)$,$H^{\boldsymbol{\delta}}_{N,t+u}(\sigma)$. One easily checks that $g^{*t+u}$ and $\hat{g}^{t}$ have the correct distribution. With this construction we have 
$$ H^{\boldsymbol{\delta}}_{N,t+u}(\sigma)+ \beta \sum\limits_{i<j} \left(1-g^{*t}_{i,j}\right)b^*_{i,j} \sigma_i \sigma_j  = H^{\boldsymbol{\delta}}_{N,t}(\sigma) - \sum\limits_{i,j=1}^N \left(1-\hat{g}^{t+u}_{i,j}\right)\hat{b}_{i,j}\sigma_i \chi^{\boldsymbol{\delta}}_{i,j}=: \tilde{H}(\sigma)$$
Expanding the fraction in equation (\ref{derivcalc}) by the partition function of $\tilde{H}$ yields
\[ \bea
& \frac{1}{N u}\E \log \frac{\sum\limits_{{\boldsymbol{\delta}}\in\N^K}\sum\limits_{\sigma \in \Sigma_N} v_{\boldsymbol{\delta}} \exp\left(H^{\boldsymbol{\delta}}_{N,t+u}(\sigma)\right)}{\sum\limits_{{\boldsymbol{\delta}}\in\N^K}\sum\limits_{\sigma \in \Sigma_N} v_{\boldsymbol{\delta}} \exp\left(\tilde{H}(\sigma)\right)}-\frac{1}{N u}\E \log \frac{\sum\limits_{{\boldsymbol{\delta}}\in\N^K}\sum\limits_{\sigma \in \Sigma_N} v_{\boldsymbol{\delta}} \exp\left(H^{\boldsymbol{\delta}}_{N,t}(\sigma)\right)}{\sum\limits_{{\boldsymbol{\delta}}\in\N^K}\sum\limits_{\sigma \in \Sigma_N} v_{\boldsymbol{\delta}} \exp\left(\tilde{H}(\sigma)\right)} \\
& \qquad \qquad =\frac{1}{N u}\E \log\left<\exp\left(-\beta \sum\limits_{i<j} \left(1-g^{*t}_{i,j}\right)b^*_{i,j} \sigma_i \sigma_j\right)\right>_{\tilde{H}} \\
& \hspace{3cm} -\frac{1}{N u}\E \log \left<\exp\left( \sum\limits_{i,j=1}^N \left(1-\hat{g}^{t+u}_{i,j}\right)\hat{b}_{i,j}\sigma_i \chi^{\boldsymbol{\delta}}_{i,j}\right)\right>_{\tilde{H}}. 
\eea \]
The event that more then one of the $b$'s is $1$ has probability of order $u^2$, and can therefore be neglected
in the limit $u\to 0$. On the other hand, if all $b$ are zero, the expressions in the expectations also vanish. It follows that the above equals
$$ \frac{1}{N u}\sum\limits_{i<j} \PP\left(b^*=\hat{b}=0 \mbox{ except for } b^*_{i,j}=1\right)\E \log \left<\exp\left( -\beta \left(1-g^{*t}_{i,j}\right) \sigma_i \sigma_j\right)\right>_{\tilde{H}}$$
$$ - \frac{1}{N u}\sum\limits_{i,j=1}^N \PP\left(b^*=\hat{b}=0 \mbox{ except for } \hat{b}_{i,j}=1\right)\E \log \left<\exp\left(  \left(1-\hat{g}^{t+h}_{i,j}\right)\sigma_i \chi^{\boldsymbol{\delta}}_{i,j}\right)\right>_{\tilde{H}}+o_u(1). $$
Computing the probabilities we see that the first probability is equal to $\frac{u\gamma}{N-t\gamma}+o_u(1)$ and the second probability is equal to $\frac{u \gamma}{N-\gamma\left(1-t\right)}+o_u(1)$. Since there are $\frac{N(N-1)}{2}$ respectively $N^2$ summands,  taking the $u\rightarrow \infty$ limit we obtain 
$$ \varphi'(t) = \frac{N-1}{N-t\gamma} \frac{\gamma}{2}\E \log \left<\exp\left( -\beta \left(1-g^{*t}_{1,2}\right) \sigma_1 \sigma_2\right)\right>_t$$
$$ - \frac{N}{N-\gamma\left(1-t\right)}\gamma\E \log \left<\exp\left(\left(1-\hat{g}^{t}_{1,2}\right)\sigma_1 \chi^{\boldsymbol{\delta}}_{1,2}\right)\right>_t$$
Observe that replacing  $\left(1-g^{*t}_{1,2}\right)$ and the corresponding $\hat{g}$-term by one has a vanishing contribution in the large $N$-limit, hence 
\beq\label{appendix_calc_1}
\varphi'(t) = \frac{\gamma}{2}\left(\E \log \left<\exp\left( -\beta \sigma_1 \sigma_2\right)\right>_t -2\E \log \left<\exp\left( \sigma_1 \chi^{\boldsymbol{\delta}}_{1,2}\right)\right>_t\right)+o(1)\,.
\eeq
The Hamiltonian only depends on $\chi^{\boldsymbol{\delta}}_{1,2}$ when $\hat{g}_{1,2}=1$, which happens with probability of order $N^{-1}$. Therefore and by the boundedness of the second term in \eqref{appendix_calc_1}
\[
\left|\E \log \left<\exp\left( \sigma_1 \chi^{\boldsymbol{\delta}}_{1,2}\right)\right>_t- \E \log \left<\exp\left( \sigma_1 \chi^{\boldsymbol{\delta}}\right)\right>_t\right| = o(1).
\]
Consequently we have
$$  \varphi'(t) = \frac{\gamma}{2}\left(\E \log \left<\exp\left( -\beta \sigma_1 \sigma_2\right)\right>_t - 2\E \log \left<\exp\left( \sigma_1 \chi^{\boldsymbol{\delta}}\right)\right>_t\right)+o(1)$$
and by symmetry among sites
$$  \varphi'(t) = \frac{\gamma}{2}\left(\frac{1}{N^2}\sum\limits_{i,j\leq N}\E \log \left<\exp\left( -\beta \sigma_i \sigma_j\right)\right>_t -\frac{2}{N}\sum\limits_{i=1}^N  \E \log \left<\exp\left(\sigma_i \chi^{\boldsymbol{\delta}}\right)\right>_t\right)+o(1)$$
as the diagonal has only vanishing contribution. Plugging in $\chi^{\boldsymbol{\delta}}$ gives the result.
\end{proof}

\begin{proof}[Proof of Fact \ref{interpolation}]
Using Lemma \ref{lem_interp1} and adopting the notation therein introduced,  
\[
\varphi'(t) = \frac{\gamma}{2} \E\left(\RM{1} + \RM{2}\right)+o(1), 
\] 
where
\[
\RM{1} \defi \frac{1}{N^2}\sum\limits_{i,j\leq N} \log \left<\exp\left( -\beta \sigma_i \sigma_j\right)\right>_t, \quad 
\RM{2} \defi -\frac{2}{N}\sum\limits_{i=1}^N  \log \left<\exp\left(\sigma_i \chi^{\boldsymbol{\delta}}\right)\right>_t
\]
It holds:
\[ \bea
\RM{1}¨&= \frac{1}{N^2}\sum\limits_{i,j\leq N}\log \left[1-\left(1-e^{-\beta}\right)\left<\sigma_i\sigma_j\right>_t\right] \\
& = - \frac{1}{N^2}\sum\limits_{i,j\leq N}\sum\limits_{n=1}^{\infty} \frac{\left(e^{-\beta}-1\right)^n}{n}\left<\sigma_i\sigma_j\right>^n.
\eea \]
Using replicas, we reformulate the above  as 
\[\bea 
\RM{1} & = - \frac{1}{N^2}\sum\limits_{i,j\leq N}\sum\limits_{n=1}^{\infty} \frac{\left(e^{-\beta}-1\right)^n}{n}\left<\prod\limits_{l=1}^n \sigma^l_i\sigma^l_j\right>_t^{\otimes n} \\
& =-\sum\limits_{n=1}^{\infty} \frac{\left(e^{-\beta}-1\right)^n}{n} \left< \left(\frac{1}{N}\sum\limits_{i\leq N}\prod\limits_{l=1}^n \sigma^l_i\right)^2\right>_t^{\otimes n}. 
\eea \]
As for the second term, denoting the expectation with respect to all $X^{{\boldsymbol{\delta}}^l}, X_1^{{\boldsymbol{\delta}}}$ and $X_2^{\boldsymbol{\delta}}$ by $\E_X$, we have: 
\[\bea 
\E_X\RM{2} & = -\frac{2}{N}\sum\limits_{i\leq N} \E_X\log \left<\exp\left(\sigma_i \chi^{\boldsymbol{\delta}}\right)\right>_t  = -\frac{2}{N}\sum\limits_{i\leq N}\E_X\log \left<\left<\exp\left(-\beta\varepsilon\sigma_i \right)\right>_{X^{\boldsymbol{\delta}}}\right>_t \\
& = -\frac{2}{N}\sum\limits_{i\leq N}\E_X\log  \left[1-\left(1-e^{-\beta}\right)\left<X^{\boldsymbol{\delta}} \sigma_i\right>_t\right] =\frac{2}{N}\sum\limits_{i\leq N}\sum\limits_{n=1}^{\infty} \frac{\left(e^{-\beta}-1\right)^n}{n}\E_X\left< X^{\boldsymbol{\delta}} \sigma_i\right>_t^n \\
& =\sum\limits_{n=1}^{\infty} \frac{\left(e^{-\beta}-1\right)^n}{n}\left<2\E_X\prod\limits_{l=1}^n X^{{\boldsymbol{\delta}}^l} \frac{1}{N}\sum\limits_{i\leq N}\prod\limits_{l\leq n}\sigma^l_i\right>_t^{\otimes n}\,.
\eea \]
We set  
$$ \RM{3} = \log \sum\limits_{{\boldsymbol{\delta}}\in\N^K} v_{\boldsymbol{\delta}}  \left<e^{-\beta\varepsilon_1\varepsilon_2}\right>_{X^{\boldsymbol{\delta}}} \,.$$
Performing analogous computations to the ones for $\RM{1}$ and $\RM{2}$, we get
\[\bea 
\E_X\RM{3} & = \E_X\log \left< \left<e^{-\beta\varepsilon_1\varepsilon_2}\right>_{\left(X^{\boldsymbol{\delta}}_1,X^{\boldsymbol{\delta}}_2\right)} \right>_t \\
& =\E_X\log \left[1-\left(1-e^{-\beta}\right)\left<X^{\boldsymbol{\delta}}_1 X^{\boldsymbol{\delta}}_2\right>_t\right] = -\sum\limits_{n=1}^{\infty} \frac{\left(e^{-\beta}-1\right)^n}{n}\E_X\left< X_1^{\boldsymbol{\delta}} X_2^{\boldsymbol{\delta}} \right>_t^n \\
&= -\sum\limits_{n=1}^{\infty} \frac{\left(e^{-\beta}-1\right)^n}{n}\left<\E_X \prod\limits_{l\leq n}X^{{\boldsymbol{\delta}}^l}_1 X^{{\boldsymbol{\delta}}^l}_2\right>^{\otimes n}_t = -\sum\limits_{n=1}^{\infty} \frac{\left(e^{-\beta}-1\right)^n}{n}\left<\left(\E_X\prod\limits_{l\leq n}X^{{\boldsymbol{\delta}}^l}\right)^2\right>^{\otimes n}_t 
\eea \]
Collecting all terms we obtain
\beq\label{deriv_rep}\varphi'(t)+\frac{\gamma}{2} \E \RM{3} = -\frac{\gamma}{2} \sum\limits_{n=1}^{\infty} \frac{\left(e^{-\beta}-1\right)^n}{n} \E\left<\left(\frac{1}{N}\sum\limits_{i\leq N}\prod\limits_{l=1}^n \sigma^l_i- \E_X\prod\limits_{l\leq n}X^{{\boldsymbol{\delta}}^l}\right)^2 \right>^{\otimes n}_t + o(1).\eeq
By $\eqref{deriv_rep}$, as $\RM{3}$ does not depend on $t$ and since $f_N(\be, h, \gamma) = \varphi(1)$ we have 
\beq\label{interpolation_calc1}
f_N(\be, h, \gamma) = \varphi(0) + \int\limits_{0}^{1} \varphi'(t) = \varphi(0)- \frac{\gamma}{2}\E \RM{3} + R_{N, \be, h, \gamma}(\zeta, K,  \mathcal V_{\boldsymbol m})+ o(1).
\eeq
We rearrange
\[
\sum\limits_{\sigma \in \Sigma_N}\exp\left(H^{\boldsymbol{\delta}}_{N,0}(\sigma)\right) =\sum\limits_{\sigma \in \Sigma_N}\exp\left( \sum\limits_{i=1}^N \sigma_i \left(h+\sum\limits_{j=1}^N\hat{g}_{i,j}\log \left<e^{-\beta\varepsilon}\right>_{X^{\boldsymbol{\delta}}_{i,j}}\right)\right)
\]
\[
=\prod\limits_{i=1}^{N}\left(1+ \exp\left(h+\sum\limits_{j=1}^N\hat{g}_{i,j}\log \left<e^{-\beta\varepsilon}\right>_{X^{\boldsymbol{\delta}}_{i,j}}\right)\right) = \prod\limits_{i=1}^{N}\left(1+ e^h \prod\limits_{j=1:\hat{g}_{i,j} = 1}^N \left<e^{-\beta\varepsilon}\right>_{X^{\boldsymbol{\delta}}_{i,j}}\right)\,,
\]
and therefore 
\[ \bea
\varphi(0) &= \frac{1}{N}\E \log \sum\limits_{{\boldsymbol{\delta}}\in\N^K}\sum\limits_{\sigma \in \{0,1\}^N} v_{\boldsymbol{\delta}} \exp\left(H^{\boldsymbol{\delta}}_{N,0}(\sigma)\right)\\
& =\E \log \sum\limits_{{\boldsymbol{\delta}}\in\N^K}v_{\boldsymbol{\delta}} \left(1+ e^h \prod\limits_{j=1:\hat{g}_{i,j} = 1}^N \left<e^{-\beta\varepsilon}\right>_{X^{\boldsymbol{\delta}}_{1,j}}\right),
\eea \]
using the symmetry in distribution.  Now the expectation depends only on the (random) number of factors in the product, which converges weakly to the Poisson($\gamma$) distribution. Hence, by standard compactness arguments the above equals
\[
\E \log \sum\limits_{{\boldsymbol{\delta}}\in\N^K}v_{\boldsymbol{\delta}} \left(1+ e^h \prod\limits_{j=1}^r \left<e^{-\beta\varepsilon}\right>_{X^{\boldsymbol{\delta}}_{1,j}}\right) + o_N(1),
\]
where $r$ is Poisson($\gamma$)-distributed, independent of everything else.  Now clearly 
\[
\varphi(0)- \frac{\gamma}{2}\E \RM{3} = \text{\sf{Parisi}}_{\be, h, \gamma}(\zeta, K,  \mathcal V_{\boldsymbol m}) +o_N(1).
\]
Plugging this into \eqref{interpolation_calc1} settles the proof of Fact \ref{interpolation}. 
\end{proof}

Finally, some technical estimates involving Binomials.

\begin{lem}\label{binomialcomp}
Let $S$ be a Binomial$(n,p)$ random variable, then for $\alpha = n p$ we have 
$$ \E[S^3 e^{\beta S}] \leq \left( \alpha^3 e^{3\beta}+3\alpha^2 e^{2\beta}+\alpha e^{\beta}\right) \exp\left((e^\beta-1)\alpha\right) $$
\end{lem}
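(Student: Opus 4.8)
The plan is to compute $\E[S^3 e^{\beta S}]$ via the probability generating function of the Binomial distribution, after rewriting $S^3$ in terms of falling factorials. Writing $(m)_k \defi m(m-1)\cdots(m-k+1)$ (with $(m)_0 \defi 1$), I would first invoke the Stirling-type identity $S^3 = (S)_3 + 3(S)_2 + (S)_1$, valid for all integers $S \geq 0$, so that
\[
\E[S^3 e^{\beta S}] = \E[(S)_3 e^{\beta S}] + 3\,\E[(S)_2 e^{\beta S}] + \E[(S)_1 e^{\beta S}],
\]
and the task reduces to bounding the three "factorial-moment" terms.

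Next I would introduce $g(z) \defi \E[z^S] = (1+p(z-1))^n$ and note that $k$-fold differentiation gives simultaneously $g^{(k)}(z) = \E[(S)_k z^{S-k}]$ and $g^{(k)}(z) = (n)_k\, p^k\, (1+p(z-1))^{n-k}$, whence
\[
\E[(S)_k\, z^S] = z^k (n)_k\, p^k\, (1+p(z-1))^{n-k}
\]
for every integer $k \geq 0$ (the case $k > n$ being the trivial identity $0 = 0$, since then $(n)_k = 0$). Specialising to $z = e^\beta$, and using that $\beta \geq 0$ forces $1 + p(e^\beta - 1) \geq 1$, for $k \leq n$ one gets $(1+p(e^\beta-1))^{n-k} \leq (1+p(e^\beta-1))^n \leq \exp(np(e^\beta-1)) = \exp(\alpha(e^\beta-1))$ (via $1+y \leq e^y$), together with $(n)_k\, p^k \leq (np)^k = \alpha^k$. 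This yields $\E[(S)_k e^{\beta S}] \leq e^{k\beta}\alpha^k \exp(\alpha(e^\beta-1))$ for $k = 1, 2, 3$, and substituting back into the decomposition above with the weights $1, 3, 1$ gives exactly the stated bound.

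I do not anticipate a genuine obstacle: once the falling-factorial expansion is written down the rest is a short generating-function computation. The only two places that need a moment's attention are the degenerate case $k > n$ (where the generating-function identity is a trivial $0 = 0$) and the orientation of the inequality $(1+p(e^\beta-1))^{n-k} \leq \exp(\alpha(e^\beta-1))$, which genuinely relies on combining $\beta \geq 0$ and $k \leq n$ with the elementary estimate $(1+x/m)^m \leq e^x$.
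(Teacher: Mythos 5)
Your proof is correct and reaches exactly the stated bound. It is, however, a genuinely different organization of essentially the same underlying calculation. The paper writes $S = \sum_{i=1}^n B_i$ with independent Bernoulli($p$) variables, expands
$\E[S^3 e^{\beta S}] = \E\sum_{i,j,k\leq n}\prod_{l\leq n} B_iB_jB_k e^{\beta B_l}$,
and splits the triple sum by the number of distinct indices ($n^3$ triples with all three distinct, $3n^2$ with exactly two equal, $n$ with all equal); each class of terms is then bounded by $\alpha^m e^{m\beta}\exp(\alpha(e^\beta-1))$ via $(1+x)^k\leq e^{kx}$. This hand-count is precisely the falling-factorial decomposition $S^3 = (S)_3 + 3(S)_2 + (S)_1$ that you invoke directly: the $n^3$, $3n^2$, $n$ counts are the upper bounds $(n)_3 \leq n^3$, etc., in disguise. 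Your route via the probability generating function $g(z) = (1+p(z-1))^n$ and its derivatives $g^{(k)}(e^\beta)$ is cleaner and more systematic — it would extend to $\E[S^m e^{\beta S}]$ for any $m$ using Stirling numbers of the second kind without redoing the combinatorics — whereas the paper's decomposition into Bernoulli indicators is more self-contained and elementary. Both ultimately rest on the same two estimates, $(n)_k p^k \leq \alpha^k$ and $(1+p(e^\beta-1))^{n-k}\leq e^{\alpha(e^\beta-1)}$, and your treatment of the degenerate case $k>n$ and of the sign condition $\beta\geq 0$ is sound.
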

\begin{proof}
We set $S = \sum\limits_{i=1}^n B_i$ for $B_1,..,B_n$ independent Bernoulli($p$) random variables. Then 
$$\E S^3 e^{\beta S} = \E \left( \sum\limits_{i=1}^n B_i\right)^3 \exp\left(\beta  \sum\limits_{i=1}^n B_i\right)$$
\beq\label{binocalc1}
= \E \sum\limits_{i=1}^n\sum\limits_{j=1}^n\sum\limits_{k=1}^n \prod\limits_{l=1}^n B_i B_j B_k e^{\beta B_l}
\eeq
Here is the counting: we have at most $n^3$ terms where $i,j,k$ are all different, at most $3n^2$ terms where in $i,j,k$ two are identical and the third is different and we have $n$ term where all three are identical. Since the distribution of $\prod\limits_{l=1}^n B_i B_j B_k e^{\beta B_l}$ only depends on how many of $i,j,k$ are identical we have
\[
(\ref{binocalc1}) \leq n^3 \E \prod\limits_{l=1}^n B_1 B_2 B_3 e^{\beta B_l} + 3n^2 \E\prod\limits_{l=1}^n B_1 B_2 e^{\beta B_l}+ n\E \prod\limits_{l=1}^n B_1 e^{\beta B_l}.
\]
Estimating term by term we have for the first term
\[
\E \prod\limits_{l=1}^n B_1 B_2 B_3 e^{\beta B_l} = \left(\E B_1 e^{\beta B_1}\right)^3 \left(\E e^{\beta B_1}\right)^{n-3} = p^3e^{3\beta} \left(1+(e^\beta-1)p\right)^{n-3} 
\]
and since $(1+x)^k \leq e^{kx}$ we have 
\[
n^3 \E \prod\limits_{l=1}^n B_1 B_2 B_3 e^{\beta B_l}\leq n^3 p^3e^{3\beta} \exp\left( (e^\beta-1)p n\right). 
\]
The same calculations for the other two terms yield
\[
3n^2 \E\prod\limits_{l=1}^n B_1 B_2 e^{\beta B_l} \leq 3n^2 p^2 e^{2\beta}\exp\left( (e^\beta-1)p n\right) 
\]
\[
n\E \prod\limits_{l=1}^n B_1 e^{\beta B_l} \leq n p e^{\beta}\exp\left( (e^\beta-1)p n\right) 
\]
collecting all terms we obtain the result
\[
\E S^3 e^{\beta S} \leq \left(\left(n p e^{\beta}\right)^3+3\left(n p e^{\beta}\right)^2+\left(n p e^{\beta}\right)\right) \exp\left( (e^\beta-1)p n\right). 
\]

\end{proof}

{\bf Acknowledgments.} We warmly thank Amin Coja-Oghlan for drawing our attention to the ISP, and for useful discussions. It is also a pleasure to thank Dmitry Panchenko for shedding light on the M\'{e}zard-Parisi Ansatz, for explanations concerning his work on diluted models, and for much appreciated help with the literature.

\end{document}